\documentclass[11pt,reqno]{article}

\usepackage{amsmath, amsfonts, amssymb,  mathrsfs,  array, stmaryrd,  indentfirst, amsthm,  hyperref, comment, color, mathtools,bbm,todonotes}
\usepackage{graphicx, enumitem, tabularx}

\usepackage[margin=0.85in]{geometry}

\usepackage{setspace}
\usepackage{comment}
\onehalfspacing

\setlength{\parskip}{8pt}

\numberwithin{equation}{section}
\newtheorem{defn}{Definition}[section]
\newtheorem{thm}{Theorem}[section]
\newtheorem{lemma}{Lemma}[section]
\newtheorem{prop}{Proposition}[section]

\newtheorem{assumption}{Assumption}[section]
\newtheorem{remark}{Remark}[section]
\newtheorem{example}{Example}[section]

\newcommand{\cA}{\mathcal{A}}

\newcommand{\cC}{\mathcal{C}}
\newcommand{\cD}{\mathcal{D}}

\newcommand{\cF}{\mathcal{F}}
\newcommand{\cG}{\mathcal{G}}

\newcommand{\cL}{\mathcal{L}}
\newcommand{\cM}{\mathcal{M}}
\newcommand{\cN}{\mathcal{N}}

\newcommand{\cR}{\mathcal{R}}
\newcommand{\cS}{\mathcal{S}}

\newcommand{\cX}{\mathcal{X}}

\def\cN{\mathcal{N}}


\newcommand{\Om}{{\Omega}}

\newcommand{\ka}{{\kappa}}

\newcommand{\pa}{{\partial}}
\newcommand{\one}{\mathbbm{1}}
\newcommand{\ignore}[1]{}

\renewcommand{\leq}{\leqslant }
\renewcommand{\geq}{\geqslant }

\def \a{\alpha}

\def \e{\varepsilon}
\def \d{\delta}
\def \1{\mathbf 1}
\def \es{\varepsilon^{m^*}}
\def \ets{\varepsilon^{2m^*}}
\def \eths{\varepsilon^{3m^*}}
\def \efs{\varepsilon^{4m^*}}
\def \ems{\varepsilon^{-m^*}}
\def \emts{\varepsilon^{-2m^*}}

\def \s{\mathfrak{S}}
\def \tauet{\tau^{\e,t,w,s,h}}
\def \eeta{\e^{2m^*}}
\def \emeta{\e^{-2m^*}}

\def \E{\mathbb{E}}
\def \F{\mathbb{F}}

\def \N{\mathbb{N}}
\def \P{\mathbb{P}}

\def \R{\mathbb{R}}

\newcommand{\comm}[1]{}

\makeatletter
\def\@setcopyright{}
\def\serieslogo@{}
\makeatother

\begin{document}

\title{Asymptotics for Small Nonlinear Price Impact: a PDE Approach to the Multidimensional Case\thanks{We would like to acknowledge L. C. G . Rogers of Cambridge University for challenging us to derive the asymptotic expansions for non-linear trading costs using the theory of viscosity solutions. We also would like to thank Marco Cirant of Universit\`a di Padova for fruitful discussions and helpful comments.}}

\date{\today}
\author{Erhan Bayraktar\thanks{University of Michigan, Department of Mathematics, 530 Church Street, Ann Arbor, MI 48109. email \texttt{erhan@umich.edu}. Supported in part by the National Science Foundation (NSF) and the Susan M. Smith Professorship.}
\and
Thomas Cay\'e\thanks{Dublin City University, School of Mathematical Sciences, Glasnevin, Dublin 9, Ireland, email \texttt{thomas.caye@dcu.ie}. Supported in part by the Swiss National Science Foundation (SNF) under grant 175133.}
\and
Ibrahim Ekren\thanks{Florida State University, Department of Mathematics, 1017 Academic Way 
Tallahassee, FL 32306, email \texttt{iekren@fsu.edu}}}
\maketitle

\abstract{We provide an asymptotic expansion of the value function of a multidimensional utility maximization problem from consumption with small non-linear price impact. In our model cross-impacts between assets are allowed. In the limit for small price impact, we determine the asymptotic expansion of the value function around its frictionless version. The leading order correction is characterized by a nonlinear second order PDE related to an ergodic control problem and a linear parabolic PDE. We illustrate our result on a multivariate geometric Brownian motion price model.}

\bigskip
\noindent\textbf{Mathematics Subject Classification (2010):} 91G10, 91G80, 35K55.

\bigskip
\noindent\textbf{Keywords:} asymptotic expansion, viscosity solutions, homogenization, portfolio choice, non-linear price impact.

\tableofcontents

\section{Introduction}

Choosing optimally the assets to own and to rebalance portfolios are the main tasks of portfolio managers. Classical finance first provided a systematic way of doing so in markets without frictions -- in what follows we will call it the Merton problem (without frictions) for the Nobel laureate Robert Merton who was the first to give its systematic solution. Although elegant, the results obtained with these models are of little help to real world investors, as the optimal strategies prescribed would lead to immediate ruin in the presence of the smallest frictions. Indeed, financial markets do not behave in the idealized way described by the assumptions of these early models. Trading actively costs, and all the more so when the size and speed of the portfolio rebalancing increases. The type of trading costs that interests us in this article stems from the lack of market depths and the insufficient liquidity of the assets traded. To trade, an investor needs counter-parties, be it over-the-counter or in the limit-order-book. Attracting enough of theses counter-parties for a large trade necessitates a price move in the direction adverse to the investor. It is accepted that the size of this shift increases with the size of the trade and the speed of trading. The seminal works of Bertsimas and Lo \cite{bertsimas.lo.98} and of Almgren and Chriss \cite{almgren.chriss.01,almgren.03} set the models that are now widely used in the literature: the price impact is assumed proportional to a power $\a>0$ of the trading rate. For tractability, price impacts were first taken to be linear ($\a=1$), and many interesting results sprouted in the literature (see e.g. \cite{guasoni.weber.17}, \cite{collindufresne.etal}, \cite{bank.soner.voss.17}, \cite{MMKS} and references therein)\footnote{Note that in these works and in our study price impact are only temporary: the asset price immediately reverts to its ``base value'' after the trade. For a review of the different models for transient (and temporary) price impact, see the survey by Gatheral and Schied \cite{gatheral.schied.13} and the work of Roch and Soner \cite{roch.soner.13}. Recent developments include \cite{schied.schoeneboren.09,GP,gueant.pu.17, ekren.MK.18, bank.voss.18}.}. However, empirical studies such as \cite{lillo.etal.03} and \cite{almgren.thum.hauptmann.li.05} suggest that the power $\a$ of the trading speed is in the interval $(0,1)$, which leads to superlinear but subquadratic trading costs. 

Portfolio choice problems with these types of frictions have only been recently studied. The first article to analyze these frictions is by Guasoni and Weber \cite{guasoni.weber.16}, in which the investor maximizes power utility of final wealth in a Black-Scholes market. Trading induces price impacts proportional to a power $\a$ of a ``volume-renormalization'' of the trading rate. They characterize the optimizer, identify a family of asymptotically optimal strategies and find the relative loss of utility to the leading order in the limit of small price impacts. These are expressed in terms of the solution of an ordinary differential equation that depends on $\a$. More recently the problem was solved in \cite{caye.herdegen.muhlekarbe.18} for constant absolute risk averse (CARA) investors and general one-dimensional markets where the price follows a not necessarily Markovian It\^o diffusion. The ODE found in \cite{guasoni.weber.16} is still a crucial building block of the solution. In a parallel strand of literature, Cai, Rosenbaum and Tankov \cite{cai.rosenbaum.tankov.15,cai.rosenbaum.tankov.16} embed these problems in the study of tracking problems.

While the previous works give a good understanding of the behaviour of one-dimensional markets, they {\color{black}do not take into }account the important effects of interdependence in multi asset markets through correlation and cross price impacts. {\color{black}Indeed, it is not unreasonable to think that, asset prices being correlated, a liquidity strain on an asset may very well have a liquidity effect on another asset. For proportional transaction costs and also for linear price impact the consequences of the asset liquidity interdependence on portfolio optimization problems are relatively well understood, see for example \cite{PST,bichuch.guasoni.18,GWM,GP,MMKS}. It is found that in contrast to the one-dimensional case, it is not anymore optimal to trade directly towards a target strategy, which turns out to be the frictionless optimizer, unless the current displacement is an eigenvector of the inverse price impact matrix multiplied by the volatility matrix. The optimal strategy is, in general, to trade in a direction given by a symmetric matrix depending on the price impact and volatility matrices. This direction needs not be pointing to the currently optimal portfolio.}

In the present article, we study a portfolio choice problem in a multidimensional market where transaction costs are superlinear and subquadratic. We consider an investor with a constant relative risk aversion (CRRA) who is maximizing the utility of her consumption on a finite time horizon. Our model allows for general patterns of cross-impacts and we consider a general Markovian It\^o price process set-up. The presence of superlinear costs makes this problem very challenging to solve explicitly and, as first done by Shreve and Soner in \cite{shreve.soner.94} in the context of transaction costs, we turn to the powerful machinery of viscosity solutions developed by Crandall, Ishii and Lions \cite{CIL} to characterize its solution. In particular, one of the challenging features of the cost structures we consider is that the associated Hamilton-Jacobi-Bellman equation is degenerate with a Hamiltonian growing superquadratically in the first derivatives with power $m=1+\frac{1}{\a}>2$. Considering that in practice transaction costs are small, it is natural to make an asymptotic analysis of the problem for small price impact using techniques developed by Soner and Touzi for proportional transaction costs in~\cite{ST}. These techniques, inspired by the homogenization theory for PDE with periodic terms, mixed with general stability results for viscosity solutions and the identification of two corrector equations rather than one, were then extended to multidimensional markets with proportional transaction cost by Possama\"i, Soner, and Touzi  \cite{PST} and later by Moreau, Muhle-Karbe and Soner \cite{MMKS} for linear price impact.\footnote{Note that to use this methodology, the setting of the problem needs to be Markovian. This constraint can be relaxed in one-dimensional markets through the use of convex duality (see \cite{GR2015}) and ergodic theory for one-dimensional diffusions, see e.g. \cite{kallsen.MK.17}, \cite{AK2013}, \cite{caye.herdegen.muhlekarbe.18}}

Based on the approach developed in \cite{ST,PST,MMKS} and using tools from homogenization theory \cite{lions.papanicolaou.varadhan.86,evans.89,evans.92, souganidis.99}, we characterize the asymptotic expansion of the value function of the problem in terms of the solutions of the so-called corrector equations. The first corrector equation is an ergodic type Hamilton-Jacobi-Bellman equation with superquadratic Hamiltonian similar to the equations studied in \cite{I, cirant.2014}\footnote{The study of portfolio problems with frictions is closely linked to ergodic control problem as was already made clear in \cite{ST}, \cite{AK2013}, \cite{caye.herdegen.muhlekarbe.18}. For recent developments in ergodic control theory and on how to approximate their solutions, see \cite{cirant.2014} and \cite{cacace.camilli.2016} and the references therein.}.  This PDE, whose structure depends neither on the price dynamics nor on the utility function, seems to be ubiquitous in the nonlinear price impact problems. It generalizes the ODE derived in \cite{guasoni.weber.16} by Guasoni and Weber, which is also a crucial element of the solution in \cite{caye.herdegen.muhlekarbe.18} in the setting of one-dimensional markets with nonlinear price impact.

This first corrector equation cannot be solved explicitly and we have to rely on existence results from \cite{I, cirant.2014}. In particular, in order to prove the asymptotic expansion, we need second derivative estimates for the solution of this equation and they are, at the moment, only available for a particular choice of the price impact functional (see~\eqref{eq:def.f.example}). For more general impacts, we expect that, under some technical assumptions, such second derivative bounds can still be obtained by using the adjoint method, but this is beyond the scope of this article.

The loss of utility at the leading order is then characterized by the second corrector equation which is a linear PDE. Similarly to the previous results in the literature, the second corrector equation is a linearization of the PDE solved by the value function of the frictionless problem. Its source term is provided by the first corrector equation and it reflects the local utility loss generated by the optimal control of the fast variable in an ergodic control problem. The solution of the second corrector equation can also be expressed as the expectation of a function of the state variables integrated over the trading horizon as in \cite{ST,PST,MMKS,caye.herdegen.muhlekarbe.18}. 

{\color{black}
Due to the fact that the first corrector equation cannot be solved explicitly in general, one needs to rely on numerical solutions of this equation to obtain approximations of the utility loss. Alternatively, some particular impact functions and choice of model allow to factorize its solution and express the utility loss in closed form (up to the solution of a one-dimensional ODE). We provide such a factorization in Proposition~\ref{factoifsscale} for the particular case of the Merton problem and cost functionals satisfying a scaling property. In this case, the dependence of the solution to the first corrector equation in the price, the time, and the wealth of the investor disappears. The only state variable for this partial differential equation is the rescaled deviation from the Merton portfolio. The solution of this equation is a couple $(\tilde \varpi,\lambda)$. Then, the utility loss can be directly written as a linear function of $\lambda$ and $(t,w)$ -- where the state variable tuple represents time and current wealth (marked to market). Although we do not prove here that they are, we also provide a family of strategies conjectured to be asymptotically optimal. These strategies can also be written using $\tilde \varpi$ and the dual friction function $\Phi$. Thus, answers to natural questions about the portfolio optimization problem with price impact are contained in $(\tilde \varpi,\lambda)$. For example, knowing whether or not the asymptotically optimal strategy consists in trading towards the Merton proportion can be answered by studying the functions $\Phi_x$ and $\tilde \varpi_x$. 

Besides this general statement on the Merton problem, we provide an example of impact function where the first corrector equation can be reduced to a one dimensional PDE and $(\tilde \varpi, \lambda)$ can be explicitly computed. Thus, for this impact function and similarly to \cite{GWM,MMKS}, $\lambda$ can be explicitly computed and the utility loss can be explicitly obtained. We also mention other examples of impact functions that could be considered.
}


The proof of the expansion relies on general stability results for viscosity solutions and locally uniform bounds for the difference between the frictionless and frictional value functions.  Regarding the viscosity solution aspects, 
unlike in \cite{MMKS,ekren.MK.18}, the solution of the first corrector equation is not a simple quadratic function of the deviation from the frictionless target. Thus, generating the test functions to use the viscosity property of the frictional value is more challenging. Compared to previous literature, our test functions have additional dependencies and do not always scale as a power function. Additionally, these features have to be included in the remainder estimates in Proposition~\ref{prop:remainder.estimate} which renders these estimates more challenging.   

The complexity of the solution of the first corrector equation also generates difficulties to check the assumptions needed to apply our main theorem. In particular, a new method is needed to obtain the locally uniform bounds for the difference between the frictionless and frictional value functions. Indeed, in our case we cannot simply apply It\^{o}'s formula to the squared difference between the current position and the target position to obtain these bounds. Also, we are not able to find a smooth subsolution of the frictional PDE as in \cite{PST} to obtain them. In order to exploit the strong mean-reversion property of the position to the frictionless target one needs to find an appropriate Lyapunov function. In fact, in our multidimensional framework, the solution of the first corrector equation is this Lyapunov function. However, the application of It\^{o}'s formula to the Lyapunov function gets extremely technical and is left to the Appendix where we give guidelines to obtain the locally uniform bounds of the renormalized difference between the two value functions for a model with Geometric Brownian Motion. {\color{black} Refining these technical estimates would prove the optimality of the candidate family of strategies. We are confident that these computations would succeed, but their length and tediousness would not bring more insight to an already quite technical article, and they are left to courageous readers.}

{\color{black} Finally, our result, with additional assumptions on the initial wealth and constraints on the available strategy can be used to determine derivative prices by utility indifference, and find the asymptotically optimal partial hedging strategies by substracting its final payoff from the final consumption of our investor (while ensuring that this quantity stays non-negative). See \cite{caye.herdegen.muhlekarbe.18} for example, where the case of a smoothed and convexified put option  with exponential utility of final wealth is considered.
}

The paper is organized as follows. We state the problem of interest in Section~\ref{s.model}. We then state our main result in Section~\ref{s.mainresults}. In particular, in Section~\ref{s.blackScholes}, we provide an expansion for the particular case of the multidimensional Merton problem and a fairly general class of impact functions. In Section~\ref{sec:exampl} we also provide an explicit expression of impact function that satisfies the assumptions of our main results and we give the expansion of the value function for this example. 
In Section~\ref{s.remainder}, we state the remainder estimates which are the main estimates to carry out the viscosity theory proofs in Section~\ref{s.semi}. In the Appendix, we prove a technical lemma on the distance between the asymptotically optimally controlled state variables and the frictionless target necessary to check the assumptions of the main Theorem~\ref{thm:expansion} for our example.

In the rest of this section, for readers' convenience, we will list the frequently used notation. 

\subsection{Notation}
\label{s.notations}
For a smooth function $\phi$,
$\phi_y$ denotes the derivative of $\phi$ in $y$ (we reserve the notation $\pa_y \phi$ for the derivatives of  any composition in $y$) and $\phi_{xy}$ denotes the second order derivative with respect to $x$ and $y$. We denote by $|\cdot|$ the Euclidian norm for a vector or matrix, $\cS_d$ stands for the set of symmetric matrices. For a deterministic vector $x$, $x_i$ denotes its $i$th component and for a vector valued process $(X_t)$, $X^i_t$ denotes its $i$th component. We also define the sets $\cD:=\{(t,w,s)\in[0,T)\times \R_{++}\times \R_{++}^d\}$ and $\cD_T:=\{(t,w,s)\in\{T\}\times \R_{++}\times \R_{++}^d\}$ where we write $\R_{++}:=(0,\infty)$. In the article, we write $C$ for a generic positive constant or a positive continuous function on $\cD$ with polynomial growth, that may change from a line to the next.
For vectors $x,y\in \R^d$ and $\a\geq 0$, we denote by $x\cdot y$ their Euclidian scalar product,  $x^{(\a)}=(sign(x_1) |x_1|^{\a},\ldots,sign(x_d) |x_d|^{\a} )^\top$, $x^{|\a|}=( |x_1|^{\a},\ldots, |x_d|^{\a} )^\top$ and $x\times y=( x_1y_1,\ldots, x_dy_d )^\top$.
\section{The Model}
\label{s.model}
\subsection{The Merton Problem Without Friction}
\label{subsec.frictionless.model}
Let $\big(\Om,\cF,\F=\left(\cF_{t}\right)_{t\in [0,T]},\P\big)$ be a filtered probability space. The financial market consists of a money market account with interest rate $r(S)$ and stocks. The dynamics of the assets are as follows
\begin{align}
\label{eq:SDE.S}
dS^j_t&=S^j_t(\mu^j(S_t) dt+\sigma^{j}(S_t) \cdot dB_t), &\mbox{with }~&S^j_0 =s^j_0\ &\mbox{for }1{\leq} j{\leq} d,~\\
dS^0_t&=S^0_t r(S_t) dt, &\mbox{with }~&S^0_0= s^0_0,&\notag
\end{align}
where $B$ is a $d$-dimensional Brownian motion, $r$, $\mu^j$ are continuous real-valued functions, $\sigma^j$ a continuous vector-valued function, for $1{\leq} j{\leq} d$. We also assume that $s\mapsto s_j\mu^j(s)$ and $s\mapsto s_j \sigma^j(s)$ are Lipschitz continuous for all $j=1,\ldots d$. We denote by $\sigma$ the $d\times d$ matrix whose $j$-th row is $\sigma^j$. We assume furthermore that $\sigma\sigma^T$ is positive definite. We write indifferently $r_t$ for $r(S_t)$, $\mu_t$ for $\mu(S_t)$ and $\sigma_t$ for $\sigma(S_t)$. \footnote{Note that the interest rate and the drift and volatility of the price could depend on time and a multi-dimensional factor process. All the analysis of the article goes through. We choose to omit them for readability and simplicity of notations.}

The investor chooses at time $t$ a consumption rate $c_t\geqslant 0$ and the number of shares $H_t= (H_t^1,...,H_t^d)^\top$ to hold in the stock. Her wealth evolves according to the following dynamics:
\begin{equation}
\label{eq:SDE.W0}
dW^0_t=(r_tW^0_t-c_t)dt+\sum_{j=1}^{d} H_t^jS_t^j(\mu^j_t-r_t)dt+\sum_{i,j=1}^{d} H_t^jS_t^j\sigma^j_i(S_t)dB^i_t,
\end{equation}
in vector notation,
\begin{equation} \label{eq:dyn.W0}
dW^0_t=(r_tW^0_t-c_t)dt+(H_t\times S_t)\cdot(\mu_t-r_t\one)dt+ (H_t\times S_t)^T\sigma_t dB_t,
\end{equation}
where $\one$ is the vector of $\R^d$ with $1$ for every component.
The objective is to maximize the expected utility from consumption. The value function for $(t,w,s)\in \cD\cup\cD_T$ is,
\begin{equation}
\label{eq:value.fcn}
V^0(t,w,s)=\sup_{(c,H)\in\mathcal{A}^0(w)}\mathbb{E}\bigg[\int_t^T U(c_r)dr+{U(c_T)}\bigg | W_t=w,S_t=s\bigg],
\end{equation} $U(x)=x^{1-R}/(1-R), R>0,\ R\neq 1$ is the utility function, and $\mathcal{A}^0$ is the set of admissible strategies (i.e. number of shares in each asset and consumption) which guarantee that the SDE for the wealth process has a strong solution and that $W^0_t\geqslant 0$ and $c_t\geqslant 0$ for all $t\geqslant 0$ and $c_T\leqslant W^0_T$ . 
\begin{remark}
	(i) Note that although $S_t$ appears in the wealth dynamics, we could easily remove it by treating the amount in units of num\'eraire invested in the stock, $H_tS_t$, as the control. Here we consider the number of shares as control in order to have more resemblance with the frictional case introduced in the next section.
	
	(ii) Note also that unlike the classical problem, the consumption at the final time is controlled. This choice is made so that the frictionless and the frictional problem \eqref{eq:Ve} have the same structure. However, we obviously have the same value as the classical Merton problem since the optimal control here is $c_T=W^0_T$ as it is the case in the classical Merton problem.
\end{remark}

Under some assumptions (e.g. the existence of regular solutions to the Hamilton-Jacobi-Bellman equation and of an optimal policy for \eqref{eq:value.fcn}) it can be proved that the frictionless value function $V^0$ satisfies the Hamilton-Jacobi-Bellman equation for all $(t,w,s)\in \cD$
\begin{align} \label{eq:v0}
&\cG^0(V^0)(t,w,s):=-\pa_t V^0-\sup_c\{ U(c)- V^0_w c \} -\cL^s V^0- V^0_w r w\\
&\qquad-\sup_{h} \Bigg\{ V^0_w\sum_{j=1}^{d} h_j s_j (\mu^j(s)-r)+\sum_{i,j=1}^{d}V^0_{w s_i} s_i h_j s_j\sigma^i(s) \cdot\sigma^j(s) +\frac{ V^0_{ww}}{2} \Bigg|\sum_{j=1}^{d} h_j s_j \sigma^j(s)\bigg|^2\Bigg\}=0,\notag\\
&V^0(T,w,s)=U(w),\label{eq:final.cond.V0}
\end{align}
where $\cL^s$ is the infinitesimal generator associated to $S$, and the above supremum is attained pointwise by 
\begin{align}
h^0(t,w,s):=\text{arg}\max_{h\in \R^d} \Bigg\{& V^0_w(t,w,s)\sum_{j=1}^{d} h_j s_j (\mu^j(s)-r(s))+\sum_{i,j=1}^{d}V^0_{w s_i}(t,w,s) s_i h_j s_j\sigma^i(s) \cdot\sigma^j(s)\notag\\
& +\frac{ V^0_{ww}(t,w,s)}{2} \Bigg|\sum_{j=1}^{d} h_j s_j \sigma^j(s)\Bigg|^2\Bigg\}.\label{eq:h0}
\end{align}
Then $h^0$ satisfies the first order condition for all $(t,w,s)\in \cD$
\begin{equation}
\label{eq:foc.h0}
V^0_w s_j\left(\mu^j-r\right)+\sum_{i=1}^{d}V^0_{w s_i}s_i s_j\sigma^i(s)\cdot\sigma^j(s)+V^0_{ww}\sum_{i=1}^{d}h^0_is_is_j\sigma^i(s)\cdot\sigma^j(s)=0,~~\mbox{for all }~1{\leq} j{\leq} d.
\end{equation}
We define additionally $H^0_t=h^0(t,W^0_t,S_t)$, for $t\geqslant 0$.
Denote by
\begin{equation}
\label{eq:def.Utilde}
\widetilde{U}(y):=\sup_{x}\{U(x)-xy\}=\frac{R}{1-R}y^{-\frac{1-R}{R}},\ \mbox{for}\ y>0
\end{equation}
the convex dual of $U$ and the optimal consumption rate
\begin{equation}\label{eq:c0}
c^0(t,w,s):=- \tilde U' ( V^0_w(t,w,s)).
\end{equation}
Finally, define $(t,w,s)\mapsto c^{h^0}(t,w,s)$ the function such that 
\begin{equation}\label{eq:qvarH}
c^{h^0}_t=c^{h^0}\left(t,W^0_t,S_t\right)=\frac{d\left\langle h^0\left(t,W^0_t,S_t\right)\right\rangle}{dt}\in \cS_d
\end{equation}
is the quadratic variation of $\left(H^0_t\right)_{t\in[0,T]}$.

\begin{assumption}
\label{assum:frictionless} We make the following assumption on the frictionless problem.
\begin{enumerate}
\item[(i)] $V^0,c^0,h^0$ are $\cC^{1,2,2}(\cD\cup \cD_T)$ functions, and $c^{h^0}$ is continuous and positive on $\cD\cup \cD_T$.
\item[(ii)] The strategy $h^0$ does not allow short selling, borrowing, nor zero position in any of the assets: \begin{equation}
\label{eq:simplex.condition}
\frac{h^0\times s}{w}\in(0,1)^d\ \mbox{ and }\ \sum_{i=1}^{d}\frac{s_ih^{0}_i}{w}<1.
\end{equation}
\item[(iii)] We have $V^0_w>0$ and $V^0_{ww}< 0$ on $\cD\cup \cD_T$.
\end{enumerate}
\end{assumption}
\begin{remark}
	(i) {\color{black}Short-selling and borrowing to invest are forbidden as in \cite{guasoni.weber.16} to avoid ruin in the presence of market impact. Indeed, in the presence of price impact, the investor might not be able to liquidate her short position or her over-invested portfolio fast enough in case of a down-turn of the market and may face ruin, which is not allowed by the power utility function. 
	
	(ii) However, unlike  \cite{guasoni.weber.16}, in \eqref{eq:simplex.condition},  we assume that the strategy strictly belongs to the interior of the simplex for technical reasons: it is necessary to ensure admissibility of our candidate strategy in the treated example; see Sections~\ref{s.def.candidate} and~\ref{s.local.boundedness} for details.}
	\end{remark}

With these assumptions and notation we can rewrite the equation \eqref{eq:v0} as 
\begin{align}
\label{eq:V0.simple}
0&=-\pa_t V^0-\tilde U( V^0_w) -\cL^s V^0- V^0_w r w\\
&- V^0_w (h^0\times s) \cdot(\mu-r\one)-\sum_{i,j=1}^{d}V^0_{w s_i} s_i h_j^0(t,w,s) s_j\sigma^j \cdot\sigma^i -\frac{ V^0_{ww}}{2} \Bigg|\sum_{j=1}^{d} h^0_j(t,w,s) s_j \sigma^j\Bigg|^2=0.\notag
\end{align}
\begin{example}\label{ex:frictionless}
Throughout the article we will illustrate our results for geometric Brownian motion as the multi-dimensional price process and an investor with risk aversion $R\in(0,1)$. This means that we take $\mu, \sigma$ and $r$ constant, such that $\sigma\sigma^\top$ is positive definite and denote 
\begin{align}\label{def:ss}
\s=\left(\sigma\sigma^T\right)^{1/2}.
\end{align}
In this case one can show by verification (using the following ansatz, Equations \eqref{eq:foc.h0} and \eqref{eq:V0.simple}) that the value function takes the form 
\begin{align}\label{eq:val.frictionless}
V^0(t,w,s)&=g(t)U(w),\\
c^0(t,w,s)&=g(t)^{-\frac{1}{R}}w,\label{eq.conso.frictionless}\\
h^0(t,w,s)&=\left(\pi_1\frac{w}{s_1},...,\pi_d\frac{w}{s_d}\right)^\top,\ \mbox{with}\ \pi=R^{-1}\left(\sigma\sigma^\top\right)^{-1}\left(\mu-r\one\right),\label{eq:strat.frictionless}
\end{align}
where 
\begin{equation}\label{eq:g}
g(t)=\left(\frac{1+(\nu-1)e^{-\nu(T-t)}}{\nu}\right)^R
\end{equation}
satisfies $g(T)=1$ and $\nu=(R-1)\left(\frac{r}{R}+\frac{\left(\mu-r\one\right)^\top\left(\sigma\sigma^\top\right)^{-1}\left(\mu-r\one\right)}{2R^2}\right)\neq 0.$

The Assumption~\ref{assum:frictionless} means that we require $\pi_i(t)=\pi_i>0$ for all $i=1,\ldots d$ and $\sum_{i=1}^d  \pi_i<1$.
\end{example}
\begin{remark}
	\label{rem:g} Note that the function $g$ is bounded and bounded away from $0$ on $[0,T]$. 
\end{remark}

\subsection{The Merton Problem with Price Impact}
Let the financial market be the same as in the frictionless case except that the execution price $\widetilde{S}_t$ may be different from the fundamental price $S_t$. {\color{black}More precisely, we consider a temporary price impact model,
\begin{align}
\label{eq:effective.price}
\widetilde{S}^j_t=S^j_t+f_j(S_t, \varepsilon \theta_t),
\end{align}
where\footnote{In the presence of price impacts, only strategies absolutely continuous with respect to the Lebesgue measure can be optimal: infinite variation leads to immediate ruin.} $\theta_t=\dot H_t$ is the trading rate, $\varepsilon$ is a small parameter, and $(f_j)$ is a family of functions satisfying the following Assumption~\ref{assum:phi}. We additionally define the convex conjugate $\Phi(s,.)$ of the cost functional $\theta\mapsto \theta\cdot f(s,\theta)$ in $\R^d$ by 
\begin{align}\label{eq:defphi}
\Phi(s,x):=\sup_{\theta\in \R^d}\bigg\{x\cdot \theta-\sum_{j=1}^{d} \theta_j  f_j(s,\theta)\bigg\},
\end{align}
on which the Assumption~\ref{assum:phi} on $f$ can be equivalently formulated.
\begin{assumption}\label{assum:phi}
	The function $f$ is continuous in both of its variables, continuously differentiable in $\theta$ and positive homogeneous of degree $\alpha\in(0,1)$ and odd in $\theta$. Additionally, we assume that for all $s\in(\R_{++})^d$, the function $\theta\mapsto \theta\cdot f(s,\theta)$ is strictly convex (it is then also homogeneous of degree $\alpha +1$\footnote{This is the degree of homogeneity of the transaction cost in the trading speed, parameter called $p$ in \cite{caye.herdegen.muhlekarbe.18}}). 
	
	$\Phi$ is assumed to be continuous in both of its variables and continuously differentiable in $x$. $\Phi$ is convex positive homogeneous of degree $m>2$ in $x$, and $\Phi_x$, its derivative in $x$, is an odd function in $x$ that is homogeneous of degree $m-1$, where $m = 1 + \tfrac{1}{\alpha}>2$.
\end{assumption}
In Section~\ref{s.blackScholes}, we provide examples of impact functions satisfying this assumption. 
\begin{remark}\label{remark:mm*}
   (i) The constant $\alpha\in (0,1)$ (or equivalently and more conveniently for the PDE results used later the constant $m=1+\frac{1}{\a}>2$) will be crucial in our study. It expresses the scaling of market impact as a function of the trading rate. We fix $\alpha \in (0,1)$ (and therefore $m>2$) and define $m^*= \frac{\alpha}{\alpha+3}=\frac{1}{3m-2}$. 
    
   (ii) Note that the function $f$ is such that the instantaneous transaction costs function $\theta\to \theta \cdot f(s,\theta)$ is non-negative, convex and positive homogeneous of order $\a+1=\frac{m}{m-1}\in(1,2)$: this is the $d$- dimensional version of the sub-quadratic trading cost as it appears in \cite{guasoni.weber.16} and \cite{caye.herdegen.muhlekarbe.18} where the market is composed of a single risky asset and a bank account. The importance of the dual friction $\Phi$ was already identified by Dolinsky and Soner in \cite{DS2013} in discrete time frictional markets and used by Guasoni and R\'asonyi in \cite{GR2015} to provide a characterization of the optimizer of the solution of the nonlinear frictions in a one-dimensional set-up using convex duality.
\end{remark}
With these frictions, the wealth of the investor starting at $t=0$ with a marked-to-market wealth of $w_0$,} satisfies the following SDE (with initial condition $W^\e_0=w_0$) on $[0,T]$
\begin{align}\label{eq:frictionalwealthdynamics}
	dW^\e_t&=(r_tW^\e_t-c_t)dt+\sum_{j=1}^{d}\left(H^j_tS^j_t[(\mu^j_t-r_t)dt+\sigma^j_t\cdot dB_t]-\theta^j_tf_j(S_t,\varepsilon \theta_t)dt\right).
\end{align}
The value function corresponding to this new control problem is
\begin{equation}
\label{eq:Ve}
	V^\varepsilon:(t,w,s,h)\in\cD\times\R^d\mapsto \sup_{(c,\theta)\in\mathcal{A}^\varepsilon(w)}\mathbb{E}\bigg[\int_t^T U(c_r)dr{+U(c_T)}\Big | W_t=w, H_t=h, S_t=s\bigg],
\end{equation}
where $(c,\theta)\in\mathcal{A}^\varepsilon$ if $W_s^{\e}-\sum_{i=1}^dH^i_s S_s^i\geqslant 0$, $H^i_s\geqslant 0$ and $c_s\geqslant 0$ for all $s\in [t,T)$ and if $c_T\leq W_T^{\e}-\sum_{i=1}^dH^i_T S_T^i$.
\begin{remark}
We note that in our framework, the total wealth $w$ represents the frictionless liquidation value of the portfolio where $h$ is the vector of number of shares held in each asset and $w-\sum_{i=1}^d h^i s^i$ is the cash holdings of the agent. In other words, $W^\e$ represents the \emph{``marked-to-market''} wealth of the investor.
\end{remark}
The associated Hamilton-Jacobi-Bellman equation is
\begin{align}
\label{eq:HJB-veps}
&-{\pa_t}V^\e-\sup_c\{ U(c)- V^\e_w c \} -\cL^s V^\e- V^\e_w r w-\sup_\theta \bigg\{ V^\e_h \theta- V^\e_w\sum_{j=1}^{d} \theta^j_tf_j(s,\varepsilon \theta_t)\bigg\}\\
&\qquad\qquad\qquad\qquad\qquad- V^\e_w\sum_{j=1}^{d} h_j s_j (\mu^j-r)-\sum_{i,j=1}^{d}V^\e_{w s_i} s_i h_j s_j\sigma^j \cdot\sigma^i -\frac{ V^\e_{ww}}{2} \Bigg|\sum_{j=1}^{d} h_j s_j \sigma^j\Bigg|^2=0.\notag
\end{align}

Optimizing over $c$ and $\theta$, and using the homogeneity of $f$ and $\Phi$ we rewrite the Hamilton-Jacobi-Bellman equation \eqref{eq:HJB-veps} as
\begin{align}\label{eq:PDE-veps}
0=\cG^\e(V^\e)(t,w,s,h)&:=-{\pa_t}V^\e-\tilde U( V^\e_w) -\cL^s V^\e- V^\e_w r w-\frac{ (V^\e_w)^{1-m}}{\e}\Phi\left(s, V^\e_h\right)\\
&- V^\e_w\sum_{j=1}^{d} h_j s_j (\mu^j-r)-\sum_{i,j=1}^dV^\e_{w s_i} s_i h_j s_j\sigma^j \cdot\sigma^i -\frac{ V^\e_{ww}}{2} \Bigg|\sum_{j=1}^{d} h_j s_j \sigma^j\Bigg|^2.\notag
\end{align}
We make the following assumption on the frictional value function.
\begin{assumption}[Characterization of the frictional value function]\label{assum:Ve.viscosity}
	For $\e>0$ the value function $V^\e$ is locally bounded and is a (possibly discontinuous) viscosity solution of the Hamilton-Jacobi-Bellman equation 
	\begin{equation}
	\begin{cases}
	\cG^\e(V^\e)(t,w,s,h) &=0 \quad\quad\quad\quad\quad\quad\qquad\mbox{ for } ~(t,w,s,h)\in\cD\times \R^d\\
	V^\e(T,w,s,h) &= U(w-\sum_{i=1}^{d}h_js_j) ~~ \mbox{ for } ~(w,s,h)\in\R_{++}\times\R_{++}^d\times \R^d,\label{eq:final.cond.Ve}
	\end{cases}
	\end{equation}
	where $\cG^\e$ is defined in \eqref{eq:PDE-veps}.
\end{assumption}
\section{Main Results}
\label{s.mainresults}
\subsection{The Corrector Equations}
{\color{black}
\paragraph{Heuristic motivation of the equations.} We postulate an asymptotic expansion for the value function of the frictional problem of the form
\begin{equation*}
V^\e(t, w, s, h) = V^0(t, w, s) -\ets u(t, w, s) - \efs \varpi\bigg(t, w, s, \frac{h- h^0(t, w, s)}{\es}\bigg),
\end{equation*}
for some functions $u$ and $\varpi$ satisfying integrability and growth conditions to be determined, and where the fourth argument of $\varpi$ is the ``fast variable'' 
\begin{align}\label{eq:fastaux1}\xi^\e(t, w, s, h) = \frac{ h- h^0(t, w, s)}{\es}.
\end{align} 
Introducing this ansatz in the HJB equation \eqref{eq:PDE-veps} we obtain (recall that $V^0$ satisfies \eqref{eq:V0.simple} and $\Phi$ is homogeneous of order $m$)
\begin{align*}
	\cG^\e(V^\e)(t,w,s,h) &= \e^{2m^*}\Big(E_1\big(t,w,s,\xi^\e,\e^{3m^*-\frac{1+2m^*}{m}}\varpi_\xi(t,w,s,\xi^\e),\varpi_{\xi\xi}(t,w,s,\xi^\e)\big)\notag\\
	&\quad + E_2\big(t,w,s,u_t, u_w, u_s,u_{ww},u_{ws},u_{ss}\big)\notag\\
	&\quad + R^\e(t,w,s,h,V^\e),
\end{align*}
where the functional $E_1$ and $E_2$ are defined as follows

\begin{align}
E_1(t,w,s,\xi,p,X)&:=-\frac{V^0_{ww}}{2} \Bigg|\sum_{j=1}^{d} \xi_j s_j \sigma^j\Bigg|^2-|V^0_w|^{1-m}\Phi\left(s,p\right)+\frac{1}{2}\text{Tr}\left(c^{h^0} X\right),\label{correc1}\\
E_2(t,w,s,x,p,q,&X_{ww},X_{ws},X_{ss}):=T(t,w,s,p,X_{ww},X_{ws};h^0(t,s,w))\notag\\
&+ x+(q\times s)\cdot \mu(s)+r wp+\frac{1}{2}\sum_{i,j=1}^{d} (\sigma\sigma^\top)_{i,j} s_is_j(X_{ss})_{i,j}+\tilde U'(V^0_w)p,\label{correc2}
\end{align}
 and we denote 
\begin{align}\label{eq:defT}
T(t,w,s, p, X_{ww}, X_{ws};h):=& p(h\times s) \cdot(\mu-r\one)+\left(X_{ws}\times s\times h \right)^T\sigma\sigma^T s\\
&+\frac{X_{ww}}{2}\left(h\times s\right)^T\sigma\sigma^T\left(h\times s\right)\notag
\end{align}
and $c^{h^0}(t,w,s)$ is the quadratic variation of the frictionless strategy defined in \eqref{eq:qvarH}. 

The value of the scaling factor $m^*= \frac{1}{3m-2}$ is in fact the only choice that allows us to claim $\e^{3m^*-\frac{1+2m^*}{m}}=1$ and the expansion yields 
\begin{align}\label{eq:heuristic_corrector_equation}
	\cG^\e(V^\e)(t,w,s,h) &= \e^{2m^*}\Big(E_1\big(t,w,s,\xi^\e,\varpi_\xi(t,w,s,\xi^\e),\varpi_{\xi\xi}(t,w,s,\xi^\e)\big)\notag\\
	&\quad + E_2\big(t,w,s,u_t, u_w, u_s,u_{ww},u_{ws},u_{ss}\big)\Big)\notag\\
	&\quad + R^\e(t,w,s,h,V^\e),
\end{align}
The last term of~\eqref{eq:heuristic_corrector_equation}, under technical conditions on $u$ and $\varpi$ converges to $0$ as $\e\downarrow 0$ for $h$ in a neighbourhood of $h^0(t, w, s)$. See Section~\ref{s.remainder} and  in particular Proposition~\ref{prop:remainder.estimate} for further details.

Thus, for the correct expansion, we expect $\varpi$ and $u$ to solve the so called first and second corrector equations,
\begin{align}\label{1st-corrector-eq}
E_1(t,w,s,\xi,\varpi_\xi(t,w,s,\xi),\varpi_{\xi\xi}(t,w,s,\xi))&=a(t,w,s)~\mbox{on}~(\cD\cup\cD_T)\times\R^d,\\
\varpi(t,w,s,0)&=0 ~\mbox{on}~\cD\notag\\
\label{2nd-corrector-eq}
-E_2(t,w,s,u_t, u_w, u_s,u_{ww},u_{ws},u_{ss})&=a(t,w,s)~\mbox{on}~(\cD\cup\cD_T),\\
u(T,w,s)&=0~\mbox{on}~\R_+^*\times\left(\R_+^*\right)^d.\notag
\end{align}

}

\begin{remark}
(i) For the first corrector equation \eqref{1st-corrector-eq}, the triplet $(t,w,s)\in\cD$ is in fact just a parameter of the equation. The variable of the equation is $\xi\in \R^d$ as in \cite{BM,cirant.2014,I}. The solution of the first corrector equation is a couple $(\varpi,a)$. The term $a$ of the solution of the first corrector equation is a source term in the second corrector equation.

(ii) The first corrector equation \eqref{1st-corrector-eq} is the most important object in the study of small transaction cost asymptotic theory. Indeed, as the transaction parameter $\e$ goes to $0$, the deviation from the frictionless position $\xi^\e_t:=\frac{H_t^\e-H^0_t}{\varepsilon^{m^*}}$ oscillates faster around $0$. The first corrector equation expresses the trade-off the agent needs to make between keeping this quantity close to $0$ and the transaction cost she has to pay. In the first corrector equation, the utility loss due to the displacement $\xi^\e$ comes from the term $-\frac{V^0_{ww} (t,w,s)}{2} \big|\sum_{j=1}^{d} \xi_j s_j \sigma^j\big|^2\geq 0$ and the transaction cost comes from the term $(V^0_w(t,w,s))^{1-m}\Phi\left(s,p\right)$. The local control problem that the agent solves is an infinite horizon problem similar to \cite[Equation (2.2)]{I}.  This comes from the difference of time scale in which the different variables evolve. Indeed, as $\e$ goes to $0$ the variable $\xi^\e$ (the so-called fast variable in the homogeneization theory) oscillates faster and faster and sees any infinitesimally small interval (under the right scaling) as infinite, while the variable $(t,w,s)$ stays almost constant on such small intervals.

{\color{black}
(iii) We note also that in the system \eqref{1st-corrector-eq}-\eqref{2nd-corrector-eq} the only nonlinearity is the dual friction function $\Phi$ that is assumed to be convex positive homogeneous of degree $m>2$. $\Phi$ is also the only term in this system that depends on the price impact function $f$.}
\end{remark}

We now state the different assumptions needed for our main result.
\begin{assumption}[Local boundedness of the renormalized utility loss]
\label{assum:bound}
For all $(t_0,w_0,s_0)\in\cD$ there exists $\e_0>0$ and $r_0>0$ such that 
\begin{align}\label{eq:control-deviation}
\sup\left\{\frac{V^0(t,w,s)-V^\e(t,w,s,h)}{\e^{2m^*}}:\left|(t,w,s,h)-(t_0, w_0, s_0, h^0(t_0,w_0,s_0))\right|{\leq} r_0,\e\in(0,\e_0)\right\}<\infty,
\end{align}
 where $m^*=\frac{1}{3m-2}$ is defined in Remark~\ref{remark:mm*}.
\end{assumption}

{\color{black}
Assumption~\ref{assum:bound}, is sufficient to define on $\cD$ the semi-limits
\begin{align}
\label{eq:def-semilimits-upper_sec3}
u^*(t,w,s):=\limsup_{\e\downarrow 0, (t',w',s',h')\to (t,w,s,h^0(t,w,s))}  \frac{V^0(t',w',s')-V^\e_*(t',w',s',h')}{\e^{2m^*}},\\
\label{eq:def-semilimits-lower_sec3}
u_*(t,w,s):=\liminf_{\e\downarrow 0, (t',w',s',h')\to (t,w,s,h^0(t,w,s))} \frac{V^0(t',w',s')-V^{\e,*}(t',w',s',h')}{\e^{2m^*}},
\end{align}
used in Section~\ref{s.semi}, where $V^\e_*$ and $V^{\e,*}$ are respectively the lower and upper semicontinuous envelopes of $V^\e$. They allow us to characterise the deviation from the frictionless value and prove our main theorem stated below. }

\begin{remark}
This assumption is a property of the frictional value $V^\e$ and is in fact the most difficult assumption to check. There are two ways of checking the assumption. Similarly to \cite{PST}, one can try to exhibit a smooth subsolution of the frictional PDE \eqref{eq:HJB-veps} that is close enough to the frictionless value function  $V^0$. However, in \cite{PST}, this method works due to a proportionality between the frictionless value $V^0$ and the solution to the second corrector equation $u$. In our framework, such a proportionality does not hold and we cannot employ this method. 

The second method is to exhibit an admissible control whose performance gives the required bounds. This is the one we use for the example treated in Section~\ref{s.blackScholes}. However, our task is rendered technical by the controls admissibility condition and the fact that power utility is only defined for positive consumptions. 
\end{remark}
Before making assumptions on the solutions to the corrector equations we define the following class of functions.

\begin{defn}\label{def:Cm}
	 We call $\cC_m$ the subset of $\cC^{1,2,2,2}\big([0,T]\times\R_{++}\times\left(\R_{++}\right)^d\times\R^d, \R_+\big)$ containing the functions $\chi$ with the following growth rates at infinity
	\begin{align}
		&\sup_{\xi\in\R^d}\left(1+|\xi|^2\right)^{-\left(\frac{1}{2}+\frac{1}{m}\right)}\left(\left|\chi\right|+\left|\chi_w\right|+\left|\chi_s\right|+\left|\chi_{ww}\right|+\left|\chi_{ws}\right|+\left|\chi_{ss}\right|+\left|\chi_{\xi\xi}\right|\right)(t,w,s,\xi)\notag\\
		&\qquad\qquad\qquad\qquad\qquad+\sup_{\xi\in\R^d}\left(1+|\xi|^2\right)^{-\frac{1}{m}}\left(\left|\chi_\xi\right|+\left|\chi_{w\xi}\right|+\left|\chi_{s\xi}\right|\right)(t,w,s,\xi){\leq} C(t,w,s)\label{eq:estimate.Cvarpi},
	\end{align}
	where $C$ is a continuous function such that $(w,s)\mapsto \sup_{t\in[0,T]}C(t,w,s)$ is locally bounded.
\end{defn}

\begin{assumption}[Assumption~on the corrector equations]\label{assum:2ndcorrect}
(i) The first corrector equation \eqref{1st-corrector-eq} admits a solution $(a,\varpi)$ 
\begin{align*}
	a:(t,w,s)\in[0,T]\times \R_{++}\times \R_{++}^d \mapsto \R_+,\\
	\varpi:(t,w,s,\xi)\in[0,T]\times \R_{++}\times \R_{++}^d\times \R^d\mapsto \R_+.
\end{align*}
These two functions are in $\cC^0([0,T]\times\R_{++}\times\R^d_{++},\R_+)$ and $\cC_m$ respectively. The function $\varpi$ is such that $\varpi(t,w,s,\xi)>0$ on $[0,T]\times \R_{++}\times \R_{++}^d\times (\R^d\backslash\{0\})$ and $\varpi(t,w,s,0)=\varpi_{\xi}(t,w,s,0)=0$ for $(t,w,s)\in[0,T]\times \R_{++}\times \R_{++}^d$.

(ii) There exists a class of functions $\F_{comp}$ such that for all $u_1\in\F_{comp}$ (resp. $u_2\in\F_{comp}$ ) viscosity subsolution (resp. supersolution) of \eqref{2nd-corrector-eq} with 
$$u_1(T,w,s){\leq} u_2(T,w,s)\mbox{ for all } (w,s)\in \R^{*}_+\times\left(\R^{*}_+\right)^d,$$ one has 
$$u_1(t,w,s){\leq} u_2(t,w,s)\mbox{ for all }(t,w,s)\in \cD.$$

(iii) $u^*$ and $u_*$ defined in \eqref{eq:def-semilimits-upper_sec3} and \eqref{eq:def-semilimits-lower_sec3} belong to $\F_{comp}$.
\end{assumption}

\begin{remark}
\label{rem:definition.u}
(i) Note that the functions $u^*$ and $u_*$ are in fact only defined inside the domain $\cD$, then extended to $\cD_T$ by upper and lower semi continuity. We prove in Proposition~\ref{prop:term} that this extension is $0$.

(ii) Note also that we only assume the comparison for the second corrector equation and our Main Theorem~\ref{thm:expansion} provides existence of solutions to this PDE via the Perron's method. 

(iii) Results of \cite{I} show that the first corrector equation has indeed a unique solution with $\varpi(t,w,s,0)=\varpi_{\xi}(t,w,s,0)=0$, $\varpi(t,w,s,\xi)>0$ whenever $\xi\neq 0$. Furthermore, the growth condition for $\varpi$ and $\varpi_{\xi}$ can be obtained using \cite[Proposition 4.2]{I} and \cite[Proposition 3.4]{BM}. We study in detail in Section~\ref{s.blackScholes} the corrector equations and show the bounds on $ \varpi_{\xi\xi}$ needed to claim $\varpi\in \cC_m$ for our main example.  We also provide a factorization of $\varpi$ in some natural models. 
\end{remark}

\subsection{The Main Result}
\begin{thm}\label{thm:expansion}
Under Assumptions~\ref{assum:frictionless},~\ref{assum:phi},~\ref{assum:Ve.viscosity},~\ref{assum:bound} and~\ref{assum:2ndcorrect},
there exists a unique viscosity solution $u\in\F_{comp}$ of the second corrector equation \eqref{2nd-corrector-eq} and
 for all $(t,w,s)\in\cD$
we have the following expansion of the value function $V^\e$:
\begin{align}\label{eq:expansion}
V^\e(t,w,s,h^0(t,w,s))= V^0(t,w,s)-\e^{2m^*} u(t,w,s)+o(\e^{2m^*}),
\end{align}
where $m^*=\frac{1}{3m-2}$.
\end{thm}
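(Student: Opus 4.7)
The strategy I would follow is a stability-of-viscosity-solutions argument in the style of \cite{ST,PST,MMKS}, suitably adapted to accommodate the genuinely non-quadratic corrector $\varpi$ produced by the superquadratic Hamiltonian. First, I would introduce the rescaled frictional deviation
\begin{equation*}
u^\e(t,w,s,h):=\frac{V^0(t,w,s)-V^\e(t,w,s,h)}{\e^{2m^*}},
\end{equation*}
and, because $V^\e$ is only assumed to be locally bounded (Assumption~\ref{assum:Ve.viscosity}), the relaxed semi-limits
\begin{equation*}
u^*(t,w,s):=\limsup_{(\e,t',w',s',h')\to(0,t,w,s,h^0(t,w,s))} u^\e(t',w',s',h'),\qquad u_*:=\liminf\cdots,
\end{equation*}
which are finite on $\cD$ by Assumption~\ref{assum:bound} and are exactly the objects $u^*, u_*$ of \eqref{eq:def-semilimits-upper2}--\eqref{eq:def-semilimits-lower2}. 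The programme is then to show that $u^*$ is a viscosity subsolution and $u_*$ a viscosity supersolution of \eqref{2nd-corrector-eq}, that both satisfy the terminal condition $\leq 0$ and $\geq 0$ respectively, and that both lie in the comparison class $\F_{comp}$ (Assumption~\ref{assum:2ndcorrect}(iii)); the comparison result Assumption~\ref{assum:2ndcorrect}(ii) then forces $u^*\leq u_*$. Since $u_*\leq u^*$ trivially, the common value $u:=u^*=u_*$ is continuous, is the (unique) viscosity solution in $\F_{comp}$, and the equality $u^\e\to u$ is equivalent to the expansion \eqref{eq:expansion}.

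The heart of the proof is the subsolution property for $u^*$. Take $(t_0,w_0,s_0)\in\cD$ and a smooth $\phi$ realising a strict local maximum of $u^*-\phi$ with value $0$. I would build the two-scale test function
\begin{equation*}
\Psi^\e(t,w,s,h):=V^0(t,w,s)-\e^{2m^*}\phi(t,w,s)-\e^{2m^*}\varpi\!\left(t,w,s,\tfrac{h-h^0(t,w,s)}{\e^{m^*}}\right),
\end{equation*}
and, via a standard penalisation in $|(t,w,s,h)-(t_0,w_0,s_0,h^0(t_0,w_0,s_0))|$, extract points $(t_\e,w_\e,s_\e,h_\e)\to(t_0,w_0,s_0,h^0(t_0,w_0,s_0))$ at which $V^\e-\Psi^\e$ attains a local minimum equal to $0$. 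The supersolution inequality of \eqref{eq:PDE-veps} then gives $\cG^\e(\Psi^\e)(t_\e,w_\e,s_\e,h_\e)\geq 0$. Writing $\xi_\e:=(h_\e-h^0(t_\e,w_\e,s_\e))/\e^{m^*}$, differentiation of $\Psi^\e$ produces contributions on several $\e$-scales: the $m$-homogeneity of $\Phi$ (Assumption~\ref{assum:phi}) pushes the transaction-cost piece to scale $\e^{2m^*}$, the martingale part of $h^0(t,W^0_t,S_t)$ generates the Laplacian-in-$\xi$ weighted by $c^{h^0}$ appearing in $E_1$, and the first-order optimality \eqref{eq:foc.h0} annihilates the would-be $\e^{m^*}$ cross terms in $T$, leaving only the quadratic displacement penalty $-\tfrac12 V^0_{ww}|\sum\xi_j s_j\sigma^j|^2$. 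Collecting, the singular-in-$\e$ part of $\cG^\e(\Psi^\e)$ is exactly
\begin{equation*}
\e^{2m^*}\bigl[E_1(t_\e,w_\e,s_\e,\xi_\e,\varpi_\xi,\varpi_{\xi\xi})-a(t_\e,w_\e,s_\e)\bigr]=0
\end{equation*}
by the first corrector equation \eqref{1st-corrector-eq}, and the remaining $\e^{2m^*}$-piece is $-E_2$ evaluated at $\phi$ plus an error that Proposition~\ref{prop:remainder.estimate} controls uniformly using the growth conditions in Definition~\ref{def:Cm}. Dividing by $\e^{2m^*}$ and sending $\e\downarrow 0$ yields $-E_2(t_0,w_0,s_0,\phi_t,\phi_w,\phi_s,\phi_{ww},\phi_{ws},\phi_{ss})\leq a(t_0,w_0,s_0)$, the subsolution inequality. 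The argument for $u_*$ is symmetric at a strict local minimum.

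For the boundary, Proposition~\ref{prop:term} provides $u^*(T,\cdot,\cdot)\leq 0\leq u_*(T,\cdot,\cdot)$ by comparing \eqref{eq:final.cond.Ve} and $V^0(T,w,s)=U(w)$ at $h=h^0(T,w,s)$ where the admissibility constraint $c_T\geq W_T-\sum h^i_TS^i_T$ matches the frictionless one. Combined with Assumption~\ref{assum:2ndcorrect}(iii), the comparison Assumption~\ref{assum:2ndcorrect}(ii) now yields $u^*\leq u_*$ on $\cD$; continuity of $u=u^*=u_*$, its membership in $\F_{comp}$ and the Perron-type existence/uniqueness in \eqref{2nd-corrector-eq} follow at once, and the expansion \eqref{eq:expansion} is the definition of the common semi-limit.

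The genuine difficulty is the construction and bookkeeping for $\Psi^\e$. In contrast to \cite{MMKS}, where $\varpi$ is a pure quadratic in $\xi$ independent of $(t,w,s)$, here every cross derivative $\varpi_{w\xi},\varpi_{s\xi},\varpi_{ww},\varpi_{ws},\varpi_{ss}$ feeds into $\cG^\e(\Psi^\e)$ with its own $\e$-scaling, and only the tailored polynomial bounds of Definition~\ref{def:Cm} together with the homogeneity in $\xi$ allow Proposition~\ref{prop:remainder.estimate} to absorb them with an $o(1)$ error after division by $\e^{2m^*}$. The secondary obstacle is that Assumption~\ref{assum:bound} is itself non-routine: the Soner--Touzi strategy of exhibiting a smooth subsolution of \eqref{eq:HJB-veps} fails because $V^0$ and the second-corrector $u$ are no longer proportional, so one must instead produce an admissible feedback control whose performance loss is $O(\e^{2m^*})$, the Lyapunov function of the resulting stability analysis being $\varpi$ itself. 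The latter estimate is carried out in the Appendix for the Black--Scholes model of Section~\ref{s.blackScholes}.
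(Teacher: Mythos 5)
Your outline follows the paper's strategy exactly: relaxed semi-limits $u^*,u_*$, sub/supersolution properties via two-scale test functions built from $V^0$, a smooth $\phi$ and the corrector $\varpi$, the remainder estimates of Proposition~\ref{prop:remainder.estimate}, the terminal condition, and comparison in $\F_{comp}$. Two points, however, need correction. First, a scaling slip: the corrector must enter the test function at order $\e^{4m^*}$, not $\e^{2m^*}$, i.e.\ $\psi^\e=V^0-\e^{2m^*}\bigl(\phi+\e^{2m^*}\varpi(\cdot,\xi^\e)\bigr)$ as in \eqref{eq:defpsiee}. With your prefactor $\e^{2m^*}$ the term $\e^{-1}(V^\e_w)^{1-m}\Phi(s,V^\e_h)$ carries the factor $\e^{m m^*-1}$ with $mm^*-1<0$ and blows up, and $\tfrac12\mathrm{Tr}(c^{h^0}\varpi_{\xi\xi})$ appears at order $1$ rather than $\e^{2m^*}$; the cancellation against $E_1=a$ that you invoke only happens with the $\e^{4m^*}$ prefactor.

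Second, and more substantively, the step you dismiss as ``a standard penalisation'' is where the paper's actual work lies, and your version of it would fail. The extremal points of $V^{\e,*}-\psi^\e$ (resp.\ $V^\e_*-\psi^\e$) must be found over $\bar B_{r_0}\times\R^d$, which is unbounded in $h$, and $\varpi$ grows only like $|\xi|^{1+2/m}$, so a penalisation in the slow variables alone does not produce interior maximisers/minimisers, nor does it give the a priori bound on the fast variable $\xi_\e$ that is indispensable for passing to the limit $\e\downarrow0$. The paper handles this with machinery that has no analogue in \cite{MMKS}: in the supersolution proof the corrector is replaced by the truncation $(1-\d)h^\eta(\xi/\xi^{*,\d})\varpi$ plus a localisation $f(|h-\hat h^{\e,\eta,\d}|)$, and the limits $\e\to0$, $\eta\to0$, $\d\to0$ are taken in that order; in the subsolution proof one adds the penalisation $c_0\e^{2m^*(1+2/m)}\varpi^2$ and perturbs $\varpi$ to $(1+\eta)\varpi$. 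In both cases the boundedness of $\tilde\xi^{\e,\eta,\d}$ is extracted by playing the coercive term $\gamma_v|\xi|^2$ in $E_1$ against the $(1+|\xi|^2)^{1/2+1/m}$ growth of the remainder, using $1+\tfrac2m<2$; this is the quantitative heart of the argument and is absent from your proposal. Relatedly, the terminal condition is not the algebraic comparison you describe: since $u^*$ is defined only on $\cD$ and extended by semicontinuity to $t=T$, Proposition~\ref{prop:term} needs a barrier of the form $\d\tfrac{T-t}{T-t^\e}$ and the same coercivity argument to rule out $u^*(T,\cdot)>0$.
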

The proof of this theorem is provided in Section~\ref{s.semi} after proving the necessary intermediate results. 
\begin{remark}
(i) {\color{black} A more detailed analysis of our proof in fact shows that for all $(t,w,s)\in \cD$, there exists a neighborhood (depending on $\e$) of $h^0(t,w,s)$ such that the same expansion of $V^\e(t,w,s,h)$ as in Theorem~\ref{thm:expansion} holds for any $h$ in this neighborhood. }

(ii) The term $a$ is the source term of the second corrector equation~\eqref{2nd-corrector-eq}. This equation is linear and its solution is the first order utility loss in \eqref{eq:expansion}, the function $a$ governs the first order utility loss induced by the presence of friction, and we have the Feynman-Kac representation
\begin{equation*}
u(t,w,s)=\E\bigg[\int_{t}^{T} a(r, W^{0}_r, S_r)dr~\Big |~W^{0}_t=w,\ S_t=s\bigg].
\end{equation*}

(iii) In the one-dimensional case, studied by Guasoni and Weber in \cite{guasoni.weber.16} and Cay\'e, Herdegen and Muhle-Karbe in \cite{caye.herdegen.muhlekarbe.18}, the first corrector equation simplifies to an ordinary differential equation. Its solution, a couple consisting of a function and a constant, gives similarly both the speed of trading as a function of the displacement from the frictionless optimizer and the leading order utility loss induced by the presence of frictions.

(iv) {\color{black}
 As the small parameter $\e$ in our model appears inside the function $f$ in Equation \eqref{eq:effective.price}, it corresponds to $\lambda^{\frac{1}{\alpha}}$ in \cite{guasoni.weber.16} and \cite{caye.herdegen.muhlekarbe.18}. Then, our expansion order $2m^*$ is equal to the expansion order $\frac{2}{p+2}$ found in these two articles, where the parameter $p$ corresponds to the transaction cost function degree of homogeneity ($\alpha + 1$ in our model).

(v) In the limiting case $\alpha\to 1$ (or equivalently $m \to 2$) we formally recover the order of the utility loss $\frac{1}{2}$ found in \cite{MMKS}. Note additionally that the displacement from the target position is rescaled by $\e^{m^*}$ in \eqref{eq:fastaux1} and as $m\to 2$ this converges to $\e^{1/4}$ which is the rescaling of the displacement in \cite{MMKS}.}
\end{remark}

One can show with some further work that a family of asymptotically optimal investment strategies is ``essentially'' given by
\begin{align}
\dot H^\e_t:=&\e^{-1}\Phi_x\left(S_t,\frac{-\e^{3m^*}}{V_w^0(t,W^\e_t,S_t)}\varpi_\xi\left(t,W^\e_t,S_t,\frac{H_t^\e-h^0_t}{\varepsilon^{m^*}}\right)\right)\notag\\
=&-\frac{\left(V^0_{w}(t,W^\e_t,S_t)\right)^{1-m}}{\es}\Phi_x\left(S_t,\varpi_\xi\left(t,W^\e_t,S_t,\frac{H_t^\e-h^0_t}{\varepsilon^{m^*}}\right)\right)\label{eq:optimal.strat}
\end{align}
and consumption rate 
\begin{align}
\label{eq:def.ce}
c^\e_t:=-\tilde U'(V^0_w (t,W^\e_t,S_t))=c^0(t,W^\e_t,S_t).
\end{align} 
We will not prove this claim for the brevity of presentation. Indeed, although the proof of this claim can be carried out similarly to \cite{caye.herdegen.muhlekarbe.18} under appropriate assumptions, in our case, due to the admissibility condition, one needs to modify these candidate strategies at appropriate hitting times to avoid short-selling (similar to \eqref{eq:def.tau.e}). This in turn necessitates to prove properties of the solution $\varpi$ of the first corrector equation that are beyond the scope of this work. Technical difficulties also arise when proving that stopping the strategy before the end of the trading interval happens with an asymptotically small probability. This requires to additionally stop other processes depending on the primitive of the model and obtain uniform moment existence for the renormalized displacement $\xi^\e(\cdot,W^\e_\cdot,S_\cdot,H^\e_\cdot)$. These technical issues are similar to the ones appearing in the case of proportional costs (see \cite{AK2013}, \cite{HMK2017}) or  with nonlinear price impact (see \cite{caye.herdegen.muhlekarbe.18}).

\subsection{Factorization of Corrector Equations for Black-Scholes Markets}\label{s.blackScholes}
Consider the Merton problem of Example~\ref{ex:frictionless} with constant coefficients. 
Let us compute
\begin{align*}
	\frac{d\langle \pi_i \frac{W^0_t}{S^i_t},\pi_j \frac{W^0_t}{S^j_t}\rangle}{dt} =& \frac{\pi_i\pi_j}{S^i_tS^j_t}\Big(\sum_{k,l=1}^{d}H^{0,k}_tS^k_tH^{0,l}_tS^l_t\sigma^k\cdot\sigma^l-W^0_t\sum_{k=1}^{d}S^k_tH^{0,k}_t\sigma^k\cdot\left(\sigma^i+\sigma^j\right)\\
	&\qquad\qquad\qquad\qquad\qquad\qquad\qquad\qquad+\left(W^0_t\right)^2\sigma^i\cdot\sigma^j\Big)\\
	=&\left(W^0_t\right)^2\frac{\pi_i\pi_j}{S^i_tS^j_t}\Big(\pi^\top \sigma\sigma^\top\pi-\pi^\top\sigma\sigma^\top(e_i+e_j)+e_i^\top\sigma\sigma^\top e_j\Big)\\
	=& \left(W^0_t\right)^2\frac{\pi_i\pi_j}{S^i_tS^j_t}( \mu-{r\one}-R\sigma\sigma^\top e_i)^\top \left(R^2\sigma\sigma^\top\right)^{-1}(\mu-{r\one}-R\sigma\sigma^\top e_j),
\end{align*}
where $\{e_i\}_{i=1,\ldots,d}$ is the canonical basis of $\R^d$. Then
\begin{align*}
	c^{h^0}_{i,j}(t,w,s)=\frac{w^2\pi_i\pi_j}{R^2s_is_j}( \mu-{r\one}-R\sigma\sigma^\top e_i)^\top \left(\sigma\sigma^\top\right)^{-1}(\mu-{r\one}-R\sigma\sigma^\top e_j)
\end{align*}
is the function appearing in the first corrector equation~\eqref{1st-corrector-eq} (recalling \eqref{correc1}).
Denote by $\Sigma$ the matrix whose $i$-th column is the vector 
\begin{equation*}
\pi_i\s^{-1}(\mu-r\one-R\sigma\sigma^\top e_i)=\pi_i R\s(\pi-e_i),\quad \mbox{for}\quad 1\leqslant i\leqslant d. 
\end{equation*}

{\color{black}
We now state a Proposition on the factorization of the corrector equations for the geometric Brownian motion.
\begin{prop}\label{factoifsscale}
Assume that the frictionless problem is the Merton problem with constant coefficients described in Example~\ref{ex:frictionless} and the Assumptions of Theorem~\ref{thm:expansion} hold. Assume also that 
\begin{align}\label{eq:propphi}
\Phi(s,x)=\tilde \Phi\left(\frac{x}{s}\right)
\end{align}
and the elliptic equation 
\begin{align}
\label{1st-corrector-eq-bis-cstt2}
\frac{R}{2}|\s x|^2-\tilde \Phi(\tilde\varpi_x(x))+&\frac{1}{2R^2}\text{Tr}\left(\tilde \varpi_{xx}(x)\Sigma^\top\Sigma\right)=\lambda,\mbox{ for }x \in\R^d,
\end{align} 
admits a unique solution $(\tilde \varpi,\lambda)$ satisfying $\tilde \varpi(0)=0$ and
\begin{align}\label{eq:boundtildevarpi}		\sup_{x \in\R^d}\left(\frac{\left|\tilde \varpi(x)\right|+\left|\tilde \varpi_{xx}(x)\right|}{\left(1+|x|^2\right)^{\left(\frac{1}{2}+\frac{1}{m}\right)}}+\frac{\left|\tilde \varpi_x(x)\right|}{\left(1+|x|^2\right)^{\frac{1}{m}}}\right)<\infty.
\end{align} 
Then, the solution of the first corrector equation has the form
\begin{align}\label{eq:factorisation3}
	\varpi(t,w,s,\xi)&=g(t) w^{1-R+4m^*}\tilde \varpi\left(\frac{\xi\times s}{w^{1+m^*}}\right),\\
	a(t,w,s)&=\lambda g(t)w^{3mm^*-R}\mbox{ for all }(t,w,s)\in [0,T]\times \R_{++}\times \R_{++}^d.\notag
\end{align}
and the expansion of the value function is 
\begin{align}\label{eq:expansioninprop}
V^\e(t,w,s,h^0(t,w,s))=U(w)\left(g(t)-\lambda (1-R)(w\e)^{2m^*} \bar g(t)\right)+ o(\ets),
\end{align}
where the function $\bar g$ solves the linear ODE 
\begin{align}
&\bar{g}'(t) +\bar{g}(t)\left[-\beta g(t)^{-\frac{1}{R}}+\beta r+\left[\frac{\beta}{R}+\frac{\beta(\beta-1)}{2R^2}\right]  \left(\mu-r\one\right)^\top\big(\sigma\sigma^\top\big)^{-1}\left(\mu-r\one\right)\right]=- g(t),\notag\\
&\bar{g}(T)=0,\label{eq:bar.g}
\end{align}
$\beta=3mm^*-R$
and has the explicit expression
\begin{equation}\label{sol:tildeg}
	\bar{g}(t)=\int_{t}^{T}g(s)\exp\bigg(\int_{t}^{s} \Big[-\beta g(u)^{-\frac{1}{R}}+\beta r+\bigg[\frac{\beta}{R}+\frac{\beta(\beta-1)}{2R^2}\bigg] \left(\mu-r\one\right)^\top\left(\sigma\sigma^\top\right)^{-1}\left(\mu-r\one\right)\Big]du\bigg)ds.
\end{equation}

\end{prop}
\begin{remark}
(i) The function $g$ is bounded from above and away from $0$ on $[0,T]$, therefore, so is $\bar{g}$.

(ii) Similarly to \cite{guasoni.weber.16,MMKS, caye.herdegen.muhlekarbe.18}, the utility loss is proportional to a constant that we denote $\lambda$ which is the only term in the expansion that depends on the expression of $f$($\bar g$ depends on $m>0$ which is assumed to be fixed). In order to understand the joint effect of correlation and impact functions on $\lambda$ (and therefore the expansion), we need to study an ergodic type Hamilton-Jacobi-Bellman equation, \eqref{1st-corrector-eq-bis-cstt2}. This equation was studied in \cite{cirant.2014, I}. Although we cannot explicitly compute $\tilde \varpi$, the equation \eqref{1st-corrector-eq-bis-cstt2} fully describes the mechanism of how $\s$ and $\Phi$ affect the expansion. We provide below a choice of $\tilde \Phi$ which allows an additional simplification of the solution of this PDE as sum of one dimensional functions. However, in general we do not expect explicit solution to this PDE.

(iii) The trading direction of the asymptotically optimal strategies in \eqref{eq:optimal.strat} are determined by the composition of the two functions $\Phi_x$ and $\varpi_\xi$. The direction to which $\varpi_\xi$ points is determined by the one $\tilde \varpi_x$ due to the factorization \eqref{eq:factorisation3}. For example, the optimal strategies trading directions would point to the origin if the composition of the vector fields $\tilde \varpi_x$ and $\Phi_x$ would point to the origin. Thus, to understand how the direction of optimal portfolio depends on the various data of the problem one needs to compute $\tilde \varpi$ numerically using the methods in \cite{cacace.camilli.2016}. 

\end{remark}
}
\begin{proof}
Using the expression \eqref{eq:val.frictionless} for the frictionless value and the factorization of $\Phi$, the first corrector equation yields that the function $\tilde \varpi$ defined by \eqref{eq:factorisation3} that might eventually depend on $t,w,s$ solves the PDE
\begin{align*}
\frac{R g(t)w^{1-R+2m^*}}{2} \Bigg|\s \frac{\xi\times s}{w^{1+m^*}}\Bigg|^2&-g(t)w^{-R+3mm^*}\tilde \Phi \bigg(\tilde \varpi_x\left(\frac{\xi\times s}{w^{1+m^*}}\right)\bigg)\\
&+\frac{g(t)w^{-R+3mm^*}}{2R^2}\text{Tr}\left(\tilde \varpi_{xx}\left(\frac{\xi\times s}{w^{1+m^*}}\right)\Sigma\Sigma^\top\right)=a(t,w,s).
\end{align*}
Given our uniqueness assumption for \eqref{1st-corrector-eq-bis-cstt2}, we have that $\tilde \varpi$ does not depend on $t,w,s$ and $a(t,w,s)=\lambda g(t)w^{-R+3mm^*}$
where $(\tilde\varpi,\lambda)$ is given by the solution of \eqref{1st-corrector-eq-bis-cstt2}.
For the second corrector equation, given the fact that $a$ does not depend on $s$, we make the following ansatz
\begin{equation}
\label{eq:def.u.example}
	u(t,w,s)=\lambda w^{3mm^*-R}\bar{g}(t)=U(w)(1-R)\lambda w^{2m^*}\bar{g}(t).
\end{equation}
Then, plugging this in \eqref{correc2}, with the optimal values obtained for $h^0$ in \eqref{eq:strat.frictionless}, we obtain the linear ODE \eqref{eq:bar.g} for $\bar g$. The solution of this Riccati equation is \eqref{sol:tildeg}.

\end{proof}

\section{Examples of Impact Functions}\label{sec:exampl}
\subsection{Main Example of Impact Function}
The main example that we fully treat is the price impact function
\begin{equation}
	\label{eq:def.f.example}
	f_j(s,\theta)=\ka\frac{m-1}{m} s_j \left(\s\left(\s ({\theta}\times{s})\right)^{(\frac{1}{m-1})}\right)_j\mbox{ for some }\ka>0,
\end{equation}
{\color{black}Whether the impact function \eqref{eq:def.f.example} holds for a given market is an empirical question that is beyond the scope of this paper.} To motivate this choice of the price impact function, let us look at the case $m=2$. For this choice of parameter one recovers the quadratic transaction cost of Garleanu \& Pedersen~\cite{GP} (where the impact matrix and the covariation of the market are assumed to be proportional) and Guasoni \& Weber~\cite{GWM}:
\begin{align}
\label{eq:gp-gw-costfunctional}
\theta\cdot f(s,\theta)=\frac{\ka}{2} |\s (\theta \times s)|^2=\frac{\ka}{2}(\theta \times s)^\top \s^2 (\theta \times s)=\frac{\ka}{2}\sum_{j=1}^{d} |(\s (\theta \times s))_{j}|^{2}.
\end{align}
{\color{black} In Garleanu \& Pedersen~\cite{GP} , the rationale for such a cost functional stems from the risk taken by the counterparts of our investor: to accept to hold these assets until they find an end buyer, the market makers must be compensated for risk. This (instantaneous) cost functional quantifies the reward necessary for these market maker to enter the transaction. In \cite{GP}, this reward is quadratic in the trading speed. The price impact function that we present in this section makes the choice of an $\alpha + 1 $-homogeneity for the cost  
\begin{align*}
\theta\cdot f(s,\theta)=\ka\frac{m-1}{m}\sum_{j=1}^{d} |(\s (\theta \times s))_{j}|^{\frac{m}{m-1}}=\ka\frac{m-1}{m}\sum_{j=1}^{d} |(\s (\theta \times s))_{j}|^{1+\a}.
\end{align*}
The second and most important reason for the choice of \eqref{eq:def.f.example} is the fact that for this choice of impact function, the solution to the first corrector equation can be written as sums of one dimensional functions. We use this property of the solution to obtain second derivative estimates on $\varpi$ and show that it is $ \cC_m$ as needed for Assumption~\ref{assum:2ndcorrect}.  Indeed, for this choice of $f$, $\Phi$ is 
\begin{equation}
	\Phi(s,x):=\sup_{\theta\in \R^d}\Bigg\{x\cdot\theta-\ka\frac{m-1}{m}\sum_{j=1}^{d} |(\s (\theta \times s))_{j}|^{\frac{m}{m-1}}\Bigg\}=\frac{1}{m\ka^{m-1}}\sum_{j=1}^d  \Big|\Big(\s^{-1}\Big(\frac{x}{s}\Big)\Big)_j\Big|^{m},\label{eq:Phix.example}
\end{equation}
where $\left(\frac{x}{s}\right)$ stands for $\left(\frac{x_1}{s_1},...,\frac{x_d}{s_d}\right)^\top$ and $\Phi$ has the same form as \eqref{eq:propphi}. 
Therefore, defining 
\begin{align}\label{eq:varpiaux}
\tilde \Phi(x)=\frac{1}{m\ka^{m-1}}\sum_{j=1}^d  \Big|\Big(\s^{-1}\Big({x}\Big)\Big)_j\Big|^{m}
\end{align}
we can use Proposition \ref{factoifsscale} to obtain a first factorization of the first corrector equation. 
Then, we show that \eqref{eq:varpiaux} allows us to write $\tilde \varpi$ defined as the solution \eqref{1st-corrector-eq-bis-cstt2} as sum of one dimensional functions. The form of $f$ in \eqref{eq:def.f.example} is also in line with \cite[Example 3.1]{PST} where a form is postulated for the first corrector equation directly. We provide in Subsection~\ref{ss.otherexample} other examples of impact function and mention what type of estimates are needed for these problems. For the general setting, the second derivative estimates for $\varpi$ needed for Assumption~\ref{assum:2ndcorrect} are believed to be true, but their proof is far beyond the scope of this article and would require one of its own. Therefore, we choose to illustrate our main result for \eqref{eq:def.f.example} only.

 }
 
\subsubsection{Corrector Equations for the Main Example}
For the example \eqref{eq:def.f.example}, in order to simplify the impact of the correlation structure in the term ${\frac{R}{2}|\s x|^2}$ of \eqref{1st-corrector-eq-bis-cstt2}, we slightly change the ansatz \eqref{eq:factorisation3}  to 
$$\varpi(t,w,s,\xi)=g(t) w^{1-R+4m^*}\tilde \varpi\left(\frac{\s(\xi\times s)}{w^{1+m^*}}\right).$$
Then, \eqref{1st-corrector-eq-bis-cstt2} becomes
\begin{align}\label{1st-corrector-eq-bis}
\sum_{j=1}^{d}\left(\frac{R}{2} x_j^2-\frac{1}{m}\ka^{1-m}| { \tilde\varpi_{x_j}(x)} |^m\right)+\frac{1}{2R^2}\text{Tr}\Big(\tilde \varpi_{xx}(x)(\Sigma\s)^\top\Sigma\s\Big)=\lambda
\end{align}
We will solve this equation in terms of $\tilde\varpi^1$, which is the solution of an ODE
\begin{align}\label{eq:first1d}
 (\tilde \varpi^1)''(x)=-x^2+\lambda_m+\frac{m^{-m}}{(m-1)^{1-m}}|(\tilde\varpi^1)'(x)|^m,\mbox{ for }x\in \R,\mbox{ and }\tilde \varpi^1(0)=0,
 \end{align}
where $\lambda_m>0$ is the unique constant such that $\lim_{x\to\pm\infty}\frac{(\tilde\varpi^1)'(x)}{|x|^{2/m}}=\pm m(m-1)^{\frac{1}{m}-1}$. 
The existence of such a solution can be provided by two different methods. One can either note that this equation is in fact \eqref{1st-corrector-eq-bis} reduced to one dimension and use the theory developed in \cite{I},\cite{BM} and \cite{cirant.2014} or, alternatively, remark that
$-(\tilde\varpi^1)'$ is in fact equal to the function defined in \cite[Theorem 6]{guasoni.weber.16} and that $\lambda_m$ is the constant $c_\a$ defined in the same theorem. We now provide additional properties of $\tilde \varpi^1$.

\begin{lemma}\label{lem:bounded.second.derivative}
	The function  $\tilde\varpi^1$ defined by the ODE \eqref{eq:first1d} is convex and has a bounded second order derivative. 
\end{lemma}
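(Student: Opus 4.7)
Let $y := (\tilde\varpi^1)'$, so the ODE reads $y'(x) = -x^2 + \lambda_m + c_m|y(x)|^m$ with $c_m := m^{-m}(m-1)^{m-1}$. Since $y \in C^1(\R)$, the ODE makes $(\tilde\varpi^1)'' = y'$ continuous on $\R$ and hence bounded on every compact set; it therefore suffices to bound $y'(x)$ as $|x| \to \infty$, and by symmetry I focus on $x \to +\infty$. Setting $A := m(m-1)^{(1-m)/m}$, one checks $c_m A^m = 1$; by hypothesis $y(x)/x^{2/m} \to A$, so in particular $y(x) > 0$ for $x$ large.

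I rewrite the ODE using the \emph{nullcline} $y_*(x) := A(x^2-\lambda_m)_+^{1/m}$, which solves $-x^2 + \lambda_m + c_m y_*(x)^m = 0$. The ODE becomes $y'(x) = c_m(y(x)^m - y_*(x)^m) = c_m m\,\xi(x)^{m-1}(y(x) - y_*(x))$ for some $\xi(x)$ between $y(x)$ and $y_*(x)$, by the mean value theorem. Since $\xi(x) \sim A x^{2/m}$, one has $c_m m\,\xi(x)^{m-1} \sim (m/A)\,x^{2-2/m}$. A Taylor expansion gives $y_*(x) = Ax^{2/m} - (A\lambda_m/m)x^{2/m-2} + O(x^{2/m-4})$ and $y_*'(x) = O(x^{2/m-1})$. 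Consequently, bounding $y'(x)$ at $+\infty$ reduces to proving $\delta(x) := y(x) - y_*(x) = O(x^{2/m-2})$.

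Differentiating yields $\delta'(x) = (m/A)\,x^{2-2/m}(1+o(1))\,\delta(x) - y_*'(x)$. The homogeneous part has solutions growing like $\exp(c_1 x^{3-2/m})$ with $c_1 > 0$, a super-exponential rate since $3 - 2/m > 1$ whenever $m > 2$. The hypothesis $y(x)/x^{2/m} \to A$ translates into $\delta(x) = o(x^{2/m})$, a merely polynomial bound strictly incompatible with any nontrivial homogeneous component. Hence $\delta$ must coincide asymptotically with the balanced particular solution, for which the linear term absorbs the source, giving $\delta(x) \sim A\,y_*'(x)/(m\,x^{2-2/m}) = O(x^{4/m-3}) = o(x^{2/m-2})$. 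I would render this rigorous by a barrier argument: for $K$ large and $x$ beyond a threshold $x_0$, the functions $\pm K x^{2/m-2}$ are strict super- and sub-solutions for the ODE of $\delta$, in the sense that at any hypothetical crossing the right-hand side exceeds the barrier's derivative, forcing $\delta$ to escape super-exponentially and thereby contradicting $\delta = o(x^{2/m})$. Combined with the Taylor expansion of $y_*$, this yields $y(x) - Ax^{2/m} = O(x^{2/m-2})$ and hence $y'(x) = \lambda_m + O(1)$ at $+\infty$.

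The main obstacle is to make the barrier/comparison argument fully rigorous despite the nonlinear dependence of the mean-value point $\xi$ on $\delta$ itself and the $o(1)$ corrections to the coefficient $c_m m\,\xi^{m-1}$; these can be absorbed into uniform constants by using the already-known asymptotic $\delta/x^{2/m} \to 0$ for $x$ large. The assumption $m > 2$ is essential twice: it ensures $3-2/m > 1$ so the linearised repulsion from the nullcline is genuinely super-polynomial, and $2/m - 2 < 0$ so the target bound on $\delta$ actually tends to zero, rendering $(\tilde\varpi^1)''$ bounded.
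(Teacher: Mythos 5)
Your plan is correct in substance, but it takes a genuinely different route from the paper. The paper's proof never linearises around the nullcline: it differentiates the ODE \eqref{eq:first1d} twice (using that $m>2$ makes $y\mapsto|y|^m$ twice continuously differentiable, so $\tilde\varpi^1\in\cC^4$), and argues by contradiction that if $(\tilde\varpi^1)''$ were unbounded it would have a sequence of interior local maxima $\tilde x_n\to\infty$ with $(\tilde\varpi^1)''(\tilde x_n)\to\infty$; at such a point $(\tilde\varpi^1)'''=0$ and $(\tilde\varpi^1)''''\leq 0$, which forces $|(\tilde\varpi^1)'(\tilde x_n)|^{m-2}|(\tilde\varpi^1)''(\tilde x_n)|^2\leq C$, contradicting $|(\tilde\varpi^1)'(\tilde x_n)|\to\infty$. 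That argument is shorter, needs only that $(\tilde\varpi^1)'$ diverges (not the exact constant $A$), and iterating it immediately gives $(\tilde\varpi^1)''\to 0$. Your nullcline/barrier argument is heavier — you must control the mean-value point $\xi$, expand $y_*$, and run the super/sub-solution comparison you sketch — but it is more quantitative: it yields the rate $y-y_*=O(x^{2/m-2})$, hence $(\tilde\varpi^1)''=O(x^{2/m-1})$, i.e.\ decay of the second derivative with an explicit power, and a second-order expansion of $(\tilde\varpi^1)'$, none of which the paper's argument provides. The barrier step does need the care you flag (the crossing argument must also cover the case where $\delta$ lies outside the barrier from the start, and the $o(1)$ perturbations of the coefficient must be absorbed uniformly), but these are routine given the a priori bound $\delta=o(x^{2/m})$ supplied by the definition of $\lambda_m$. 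Minor cosmetic point: your final display should read $(\tilde\varpi^1)''=O(1)$ (indeed $o(1)$ with the sharper bound on $\delta$); writing it as $\lambda_m+O(1)$ is harmless but obscures that the two $x^2$ terms cancel.
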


\begin{proof}{\color{black} By symmetry, we only show the bound at $+\infty$. Since it is an anti-derivative of an odd, increasing function  (see e.g. \cite[Lemma 3.1]{caye.herdegen.muhlekarbe.18}), $\tilde \varpi^1$ is convex. Therefore we only need an upper bound for $(\tilde \varpi^1)''$.}
The second derivative being a function of the first derivative and $m>2$, we have that $\tilde \varpi^1$ is four times continuously differentiable. $\tilde \varpi^1$ being subquadratic the limit of $(\tilde \varpi^1)''$ at infinity cannot be infinity. Thus, there exists $M>0$ and $y_n\uparrow\infty$ such that $(\tilde \varpi^1)''(y_n)\leq M$. Assume that $(\tilde \varpi^1)''$ is not bounded, meaning, there exists $x_n\to \infty$ such that $ (\tilde \varpi^1)''(x_n)\uparrow \infty$ and $(\tilde \varpi^1)''(x_n)>M$. Thus, for all $x_n$ there exists $y_{m_1(n)}$ and $y_{m_2(n)}$ such that $y_{m_1(n)}\leq x_n\leq y_{m_2(n)}$. Note that $(\tilde \varpi^1)''$ has a local maximum on $[y_{m_1(n)},y_{m_2(n)}]$. Denote $\tilde x_n$ this local maximum and note that $y_{m_1(n)}<\tilde x_n<y_{m_2(n)}$ and $\tilde\varpi'''(\tilde x_n)=0$. We differentiate \eqref{eq:first1d} twice to obtain that at the local maximum $\tilde x_n$ of $(\tilde \varpi^1)''$, we have
$$0\geq  (\tilde \varpi^1)''''(x)=-2+C|(\tilde\varpi^1)'(\tilde x_n)|^{m-2}|(\tilde\varpi^1)''(\tilde x_n)|^{2},$$
for a constant $C$ that only depends on $m$. Thus, 
$$|(\tilde\varpi^1)'(\tilde x_n)|^{m-2} |(\tilde \varpi^1)''(\tilde x_n)|^2\leqslant \frac{2}{C}.$$
The growth of $(\tilde\varpi^1)'$ and $m>2$ gives that for some $n$ large enough the assumption $(\tilde \varpi^1)''(x_n)>M$ is contradicted. We conclude that $(\tilde \varpi^1)''$ is bounded. Note  that repeating the procedure, replacing the $y_n$'s by the $\tilde x_n$'s, we can prove that $(\tilde \varpi^1)''$ converges to $0$ at infinity.
\end{proof}

\begin{remark}
 The result of Lemma~\ref{lem:bounded.second.derivative} is actually stronger than what is necessary for the analysis in the rest of the article. To prove Theorem~\ref{thm:expansion}, the growth conditions defined in the class $\cC_m$ (cf. Definition~\ref{def:Cm}) are enough, i.e. the ones stated below in \eqref{eq:growth.condition.varpi1}.
	
\end{remark}

We now give the following lemma for the wellposedness of the reduced first corrector equation \eqref{1st-corrector-eq-bis}. 
\begin{lemma}
\label{lem:growth.varpi.tilde}
Provided that the diagonal terms of $\left((\Sigma\s)^\top(\Sigma\s)\right)$ are all positive, there exists a unique solution, denoted by  $(\lambda,\tilde \varpi)$, of
\eqref{1st-corrector-eq-bis} satisfying $\tilde \varpi(0)=0$, $\tilde \varpi\geqslant 0$ and $\liminf_{|x|\to \infty}\varpi(x)>0$. 
This solution is given by
$\tilde \varpi(x)=\sum_{j=1}^d \beta_j \tilde\varpi^1(\gamma_j x_j)$, where $\tilde\varpi^1$ is the solution of \eqref{eq:first1d} and 
\begin{align}
\gamma_j&=\left(2\left(\frac{m}{\kappa(m-1)}\right)^{m-1}\frac{R^{3m-1}}{\left((\Sigma\s)^\top(\Sigma\s)\right)^m_{jj}}\right)^{m^*},\notag\\
\beta_j&=2^{-4m^*}\left((\Sigma\s)^\top(\Sigma\s)\right)^{4mm^*-1}_{jj}R^{-1-4m^*}\left(\frac{\kappa(m-1)}{m}\right)^{4(m-1)m^*},\notag\\
\lambda&=\lambda_m \sum_{j=1}^d  \frac{R}{2\gamma_j^2}.\label{eq:sol.lambda}
\end{align}
Here, $\lambda_m$ is also identified in \eqref{eq:first1d}. Additionally, $\tilde\varpi^1$ and
$\tilde \varpi$ are $\cC^2$, convex and satisfy the limit and bounds
\begin{align}
	&\lim_{|x|\to \infty}\frac{\tilde\varpi^1(x)}{|x|^{1+\frac{2}{m}}} = \frac{m^2}{(m+2)(m-1)^{1-\frac{1}{m}}},&&\label{eq:lim.growth.tilde.varpi1}\\
	&\frac{(1+|x|^2)^{\frac{1}{2}+\frac{1}{m}} }{C}-C\leqslant \tilde \varpi(x) \leqslant C(1+|x|^2)^{\frac{1}{2}+\frac{1}{m}}, &&\label{eq:growth.condition.varpi1}\\
	&\mbox{and}\sup_{i\in\{1,...,d\},x\in \R^d}\left|\frac{\tilde \varpi_{x_i} (x)x_i}{\tilde \varpi(x)}\right|+\left|\frac{\tilde \varpi_{x_i}(x)}{|x|}\right|+\left|\frac{\tilde \varpi_{x_i}(x)}{1+|x|^{\frac{2}{m}}}\right|<\infty,\label{eq:growth.condition.varpi2}
\end{align}
where $C$ is a positive constant.

\end{lemma}
\begin{proof}
	By verification, we see that the given function solves the PDE \eqref{1st-corrector-eq-bis}. Since the solution of this PDE is unique (this is a consequence of \cite[Theorem 4.14]{I}), our candidate is the solution of \eqref{1st-corrector-eq-bis}. For the limits, let $\eta>0$ be arbitrarily small. By \cite{guasoni.weber.16}, there exists $x_\eta>0$ such that for all $x\geqslant x_\eta$ it holds
	\begin{equation}
	\label{eq:derivative.varpi1.bound}
		\left(\frac{m}{(m-1)^{1-\frac{1}{m}}}-\eta\right)x^{\frac{2}{m}}\leqslant(\tilde\varpi^1)'(x)\leqslant \left(\frac{m}{(m-1)^{1-\frac{1}{m}}}+\eta\right)x^{\frac{2}{m}}.
	\end{equation}
	Then integrating between $0$ and $x$ for $x\geqslant x_\eta$ we obtain,
	\begin{equation*}
		C^-_\eta+\left(\frac{m}{(m-1)^{1-\frac{1}{m}}}-\eta\right)\frac{m}{m+2}x^{1+\frac{2}{m}} \leqslant \tilde\varpi^1(x)\leqslant C^+_\eta+ \left(\frac{m}{(m-1)^{1-\frac{1}{m}}}+\eta\right)\frac{m}{m+2}x^{1+\frac{2}{m}},
	\end{equation*}
	where $C^\pm_\eta=\int_0^{x_\eta}(\tilde\varpi^1)'(y)dy- \Big(\frac{m}{(m-1)^{1-\frac{1}{m}}}\pm\eta\Big)\frac{m}{m+2}x_\eta^{1+\frac{2}{m}}$. As $\eta$ was arbitrary we obtain the growth behaviour of $\tilde\varpi^1$ at $+\infty$. The reasoning for its growth behaviour at $-\infty$ is exactly the same. Now take $x=(x_1,...,x_d)\in\cS^d$, and $r>0$. We have
	\begin{align}
		\label{eq:lim.growth.tilde.varpi}
		\frac{\tilde \varpi(rx)}{r^{1+\frac{2}{m}}}=\sum_{i=1}^{d}\beta_j\frac{\tilde\varpi^1(\gamma_jrx_j)}{r^{1+\frac{2}{m}}},
	\end{align}
	and then, using the growth behaviour of $\tilde \varpi^1$, we obtain
	\begin{align}
		\lim_{r\to\infty}\frac{\tilde \varpi(rx)}{r^{1+\frac{2}{m}}}=\frac{m^2}{(m+2)(m-1)^{1-\frac{1}{m}}} \sum_{i=1}^{d}\beta_j|\gamma_jx_j|^{1+\frac{2}{m}}.
	\end{align}
	Finally, \eqref{eq:growth.condition.varpi1} is a consequence of~\eqref{eq:lim.growth.tilde.varpi}, \eqref{eq:derivative.varpi1.bound} and the same reasoning on $\cS^d$ as above. Then \eqref{eq:growth.condition.varpi2} is a consequence of the linear growth of $(\tilde \varpi^1)'$ around $0$ and its growth at infinity given in \cite[Theorem 4]{guasoni.weber.16}.
\end{proof}
We summarize here our results for the price impact function \eqref{eq:def.f.example}.
\begin{thm}\label{th:example}
Let $\mu,\sigma, r$ be such that the strategy \eqref{eq:strat.frictionless} does not have any short selling and borrowing, meaning $\pi_i>0$, for all $1\leqslant i\leqslant d$ and $\sum_{i=1}^d  \pi_i<1$. Assume also that the price impact is as in \eqref{eq:def.f.example} and $0<R<1$. Then for all $(t,w,s)\in \cD$, the following expansion of the value function holds 
\begin{align*}
V^\e(t,w,s,h^0(t,w,s))=U(w)\left(g(t)-\lambda (1-R)(w\e)^{2m^*} \bar g(t)\right)+ o(\ets),
\end{align*}
where $g$ is given by \eqref{eq:g}, $\bar g$ solves \eqref{eq:bar.g} and $\lambda$ is given by \eqref{eq:sol.lambda}.
\end{thm}
\begin{proof}
Note that from the closed form solutions obtained for Example~\ref{ex:frictionless}, and the assumptions of the Theorem, Assumption~\ref{assum:frictionless} is satisfied. The definition of the price impact function in \eqref{eq:def.f.example} satisfies Assumption~\ref{assum:phi}. By Lemma~\ref{lem:growth.varpi.tilde}, Assumption~\ref{assum:2ndcorrect} (i) is satisfied. By the weak dynamic programming result of~\cite[Corollary 5.6]{bouchard.touzi.11}, Assumption~\ref{assum:Ve.viscosity} is satisfied. Now, considering the computations above, in order to use our main theorem and Proposition~\ref{factoifsscale}, we need to show that Assumptions~\ref{assum:bound} and the last three items of Assumption~\ref{assum:2ndcorrect} are satisfied, i.e. (i) define a set $\F_{comp}$ containing the function $u$ defined in~\eqref{eq:def.u.example} and prove that it has the properties of Assumption~\ref{assum:2ndcorrect}, (ii) prove the bound~\eqref{eq:control-deviation}. These are the aim of Lemma~\ref{lem:ex.Fcomp} and Proposition~\ref{prop:control-deviation} below.
\end{proof}
\begin{remark}
	\label{rem:barg.bounded}
	{\color{black}

(i) Similarly to \cite{guasoni.weber.16} and \cite{caye.herdegen.muhlekarbe.18}, the utility loss is proportional to a constant found as a part of the solution of the relevant ODE ($c_\alpha$ or $c_p$ respectively in these articles).

(ii) The main computational simplification of our choice \eqref{eq:def.f.example} is that we can compute $\lambda$ as in \eqref{eq:sol.lambda} and obtain the second derivative estimates in \eqref{eq:boundtildevarpi} due to the one dimensional factorization of $\tilde \varpi$ proven in Lemma~\ref{lem:growth.varpi.tilde}. In this case $\lambda$ and therefore the utility loss is proportional to $\kappa^{2(m-1)m^*}$ which represents the size of the price impact in \eqref{eq:def.f.example}.

(iii) The restriction $0<R<1$ is a technical condition and is needed to obtain the bounds \eqref{eq:control-deviation} in the Appendix. Under this condition, we can easily control the utility at final time by increasing the consumption.\footnote{For $R>1$ cumbersome estimates would be needed, and we omit them so as not to drown the already complicated analysis in more technical details.}

(iv) Given the choice of impact function \eqref{eq:def.f.example}, we can show that the state that is controlled via conjectured asymptotically optimal controls has a particular behavior. Indeed one can show that with this control, each component of the vector $\s\frac{H_t^\e-h^0_t}{\varepsilon^{m^*}} $ representing the deviation from the target position multiplied by $\s$, has a one dimensional behavior at the leading order. The factorization of the function $\varpi$ given in Lemma~\ref{lem:growth.varpi.tilde} is then not surprising. For the case $m=2$, at the leading order the mean reversion speed of every component is equal. Thus, we obtain that any direction is a principal portfolio in the sense of \cite{guasoni.weber.16} and the asymptotically optimal strategy points to the target position. However, in the case $m>2$, a straightforward computation shows that, although each component of $\s\frac{H_t^\e-h^0_t}{\varepsilon^{m^*}} $ solves a one dimensional ODE at the leading order, the local mean reversion speed of each component to $0$ might be different. Therefore the conjectured asymptotically optimal portfolio might not locally point to the target position. 

(v) Unlike the one dimensional framework of \cite{guasoni.weber.16,caye.herdegen.muhlekarbe.18} where the expansion is characterized by the solution of a unique ODE \eqref{eq:first1d} that does not depend on the impact function but only on $\alpha$, in the general multidimensional case for each impact function one might need to solve a PDE. It would be interesting to see whether the PDE \eqref{1st-corrector-eq-bis-cstt2} would admit further simplification for a fairly large class of impact functions.   
}
\end{remark}

\subsubsection{Verification of Assumptions~\ref{assum:bound} and~\ref{assum:2ndcorrect} for the Main Example}\label{ss.bounds}

\begin{lemma}\label{lem:ex.Fcomp}
	The class of function
	\begin{align}\label{def:compclass}
		\F_{comp}=\left\{\phi:\cD\mapsto \R: \exists k>0: \sup_{(t,w,s)\in \cD}\frac{|\phi(t,w,s)|}{1+w^k+w^{-k}+\sum_{i=1}^{d}(s_i^k+s_i^{-k})} <\infty\right\}
	\end{align}
	has the comparison property defined in Assumption~\ref{assum:2ndcorrect}. 
\end{lemma}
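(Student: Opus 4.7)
The plan is to run a standard viscosity comparison argument for the linear parabolic operator $E_2$, taking advantage of the Black-Scholes structure that the coefficients inherit in this example. After substituting $h^{0}=\pi w /s$ and $\tilde U'(V^{0}_w)=-g(t)^{-1/R}w$, the operator $E_2$ becomes
\begin{equation*}
E_2[u]=u_t+A(t)\,wu_w+\sum_i\mu_i s_i u_{s_i}+\tfrac12 B(t)\,w^2 u_{ww}+\sum_{i,j}C_{ij}(t)\,w s_j u_{ws_j}+\tfrac12 \sum_{i,j}(\sigma\sigma^\top)_{ij}s_i s_j u_{s_is_j}+u,
\end{equation*}
where $A,B,C_{ij}$ are bounded continuous functions on $[0,T]$. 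This is a Black-Scholes type operator, so the natural change of variables $\tilde w=\log w$, $\tilde s_i=\log s_i$ turns it into a linear second-order parabolic operator on $\mathbb R^{d+1}$ with bounded, time-continuous coefficients and $L$-Lipschitz drift/diffusion data in the state variables (in fact constant in $\tilde w,\tilde s$). Functions in $\F_{comp}$ get mapped to functions that grow at most like $e^{k|\tilde w|}+\sum_i e^{k|\tilde s_i|}$.

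First I would reduce to this logarithmic setting and recall the classical viscosity comparison principle on $\mathbb R^{d+1}$ in a class of functions with prescribed exponential growth (e.g.\ \cite{CIL} with the exponential penalty introduced by Ishii): it is enough to exhibit, for each sub/supersolution pair $u_1,u_2\in\F_{comp}$ with $u_1(T,\cdot)\leq u_2(T,\cdot)$, a family of strict supersolutions $\Psi_\eta$ of the linearized equation that dominate $|u_1|+|u_2|$ at infinity and vanish as $\eta\downarrow 0$. For this Lemma I would take
\begin{equation*}
\Psi(t,w,s):=e^{\alpha(T-t)}\Big(w^{K}+w^{-K}+\sum_{i=1}^{d}\big(s_i^{K}+s_i^{-K}\big)\Big)
\end{equation*}
with $K>k$ (the growth constant of $u_1,u_2$) and $\alpha>0$ large enough. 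A direct computation, using the explicit Black-Scholes coefficients of $E_2$, shows that each monomial $w^{\pm K}$ (resp.\ $s_i^{\pm K}$) satisfies $E_2[w^{\pm K}]=q_w^{\pm}(t)\,w^{\pm K}$ (resp.\ $q_{s_i}^{\pm}(t)\,s_i^{\pm K}$) for bounded functions $q_w^{\pm}, q_{s_i}^{\pm}$ on $[0,T]$; thus choosing $\alpha$ larger than $\max |q_{\cdot}^{\pm}|$ gives $E_2[\Psi]\geq \delta\Psi$ for some $\delta>0$, so $\Psi$ is a strict classical (hence viscosity) supersolution.

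Next I would perform the standard doubling-of-variables argument: for $\eta,\varepsilon>0$ consider
\begin{equation*}
\Phi_{\varepsilon,\eta}(t,w,s,t',w',s')=u_1(t,w,s)-u_2(t',w',s')-\tfrac{1}{\varepsilon}\big(|t-t'|^2+|w-w'|^2+|s-s'|^2\big)-\eta\,\Psi(t,w,s).
\end{equation*}
Because $u_1,u_2\in\F_{comp}$ have polynomial growth of order $k<K$, for every fixed $\eta>0$ the function $\Phi_{\varepsilon,\eta}$ attains its supremum on $[0,T]^2\times(\R_{++}\times\R_{++}^d)^2$ at some interior point, and the classical bounds on approximate maxima of semijets (Crandall-Ishii lemma) apply. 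If the supremum of $u_1-u_2-\eta\Psi$ on $\cD$ were strictly positive, then for $\varepsilon$ small the maximizer would be attained in the open domain with $t,t'<T$ (because of the terminal condition and the sign of $\Psi$); inserting the viscosity sub/supersolution inequalities and subtracting yields, in the $\varepsilon\downarrow 0$ limit and using the linearity of $E_2$, the inequality $E_2[\eta\Psi]\leq 0$ at the limit maximizer, contradicting $E_2[\Psi]\geq\delta\Psi>0$. Hence $u_1-u_2\leq \eta\Psi$ on $\cD$ for every $\eta>0$, and sending $\eta\downarrow 0$ gives $u_1\leq u_2$.

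The main obstacle is the bookkeeping for the strict-supersolution estimate for $\Psi$: one has to check that the four families of monomials $w^{\pm K}$ and $s_i^{\pm K}$ each produce coefficients $q^{\pm}_\cdot(t)$ that are bounded on $[0,T]$ (so a single $\alpha$ works). This reduces to inspecting the Black-Scholes coefficients, together with boundedness of $g(t)$ and $g(t)^{-1/R}$ on $[0,T]$ (Remark \ref{rem:g}); the $u$-term in $E_2$ is harmless because it is linear with a bounded multiplier. Once this verification is done, the rest of the argument is a textbook application of Ishii's doubling technique to a linear parabolic operator, and it delivers the required comparison property for $\F_{comp}$.
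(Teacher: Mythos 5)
Your proof follows essentially the same route as the paper's: a power-growth barrier with an exponential time factor (the paper uses $\Gamma(t,w,s)=e^{-\ell t}\bigl(1+w^{2k}+w^{-2k}+\sum_{i}s_i^{2k}+s_i^{-2k}\bigr)$ with $\ell$ large, you use $e^{\alpha(T-t)}\bigl(w^{K}+w^{-K}+\sum_{i}(s_i^{K}+s_i^{-K})\bigr)$ with $K>k$) that forces $u_1-u_2-\delta\Gamma\to-\infty$ at the boundary of $\cD$, followed by the standard doubling-of-variables argument. Just watch the sign convention: with the paper's definition $E_2[u]=u_t+\dots$ (which carries no zeroth-order term ``$+u$'') and the equation written as $-E_2[u]=a$, a strict supersolution of the linearized equation must satisfy $-E_2[\Psi]\geq\delta\Psi$ rather than $E_2[\Psi]\geq\delta\Psi$; this is exactly what your choice of large $\alpha$ delivers, since $\partial_t\Psi=-\alpha\Psi$.
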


Note that if a function $\phi\in \F_{comp}$ satisfies the boundedness assumption for a $k>0$, it also satisfies it for all $k'\geqslant k$. This is due to the fact that 
$$\sup_{w>0}\frac{1+w^k}{1+w^{k'}}+\sup_{w>0}\frac{1+w^{-k}}{1+w^{-k'}}<\infty.$$
\begin{proof}
	We now show comparison of viscosity solutions within $\F_{comp}$. Let $u_1, u_2\in\F_{comp}$ be respectively viscosity subsolution and supersolution of \eqref{2nd-corrector-eq} such that $u_1(T,\cdot ){\leq} u_2(T,\cdot)$ on $\R_{++}\times \R_{++}^d$. Take $k>0$ such that 
$$ \sup_{j=1,2}\sup_{(t,w,s)\in \cD}\frac{|u_j(t,w,s)|}{1+w^k+w^{-k}+\sum_{i=1}^{d}(s_i^k+s_i^{-k})} <\infty.$$

A direct computation shows that for $\ell>0$ large enough the function 
\begin{equation*}
	\Gamma:(t,w,s)\mapsto e^{-\ell t}\Big(1+w^{2k}+w^{-2k}+\sum_{i=1}^{d}\big(s_i^{2k}+s_i^{-2k}\big)\Big)
\end{equation*}
is a viscosity supersolution of {\eqref{2nd-corrector-eq}}. The argument to show the comparison property on $\cD$ is the standard dedoubling of variables technique as in the proof of Theorem 4.4.4 and Theorem 4.4.3  in \cite{P09}. By definition of $\Gamma$ we have that for all $\d>0$, 
\begin{align*}	
	&\lim_{w\to 0,\ w>0}u_1(t,w,s)-u_2(t,w,s)-\d\Gamma(t,w,s) =-\infty,\\ &\lim_{s_i\to 0,\ s_i>0}u_1(t,w,s)-u_2(t,w,s)-\d\Gamma(t,w,s) =-\infty,\ \mbox{for}\ 1\leqslant i\leqslant d,\\
	&\lim_{w\to +\infty}u_1(t,w,s)-u_2(t,w,s)-\d\Gamma(t,w,s)  =-\infty,\\ &\lim_{s_i\to +\infty}u_1(t,w,s)-u_2(t,w,s)-\d\Gamma(t,w,s)  =-\infty,\ \mbox{for}\ 1\leqslant i\leqslant d.
\end{align*}
Thus defining
\begin{align*}
	\Phi_\d(t,t',w, w',s,s') =& u_1(t,w,s)-u_2(t',w',s')-\d\Gamma(t',w',s')\\
	&-\frac{1}{2\d}\left(\left|t-t'\right|^2+\left|w-w'\right|^2 +\left|s-s'\right|^2\right),
\end{align*}
the maximizers  $(t_\d,t_\d',w_\d, w_\d',s_\d,s_\d')$ of $\Phi_\d$ exists for all $\delta>0$. 
One can now conclude similarly to the proof of Theorem 4.4.4.  in \cite{P09}.
\end{proof}

The remaining ingredient for the proof of Theorem~\ref{th:example} is to check the Assumption~\ref{assum:bound} and to show that $u^*$ defined in \eqref{eq:def-semilimits-upper2} is in $\F_{comp}$ defined by \eqref{def:compclass} which is proved in Proposition~\ref{prop:control-deviation} below.

\begin{prop}\label{prop:control-deviation}
	Assume that the assumptions of Theorem~\ref{th:example} hold. Then, for all $(t_0,w_0,s_0)\in \cD$ there exists $\e_0>0$ and $r_0>0$ such that 
	\begin{align}\label{eq:control-deviation-bis}
		\sup\bigg\{&\frac{V^0(t,w,s)-V^\e(t,w,s,h)}{\e^{2m^*}}:\\
		&\qquad|t-t_0|+|w-w_0|+|s-s_0|+|h-h^0(t_0,w_0,s_0)|{\leq} r_0,~ \e\in(0,\e_0)\bigg\}<\infty,\notag
	\end{align}
	and  $u^*$ defined in \eqref{eq:def-semilimits-upper2} is in $\F_{comp}$.
	\end{prop}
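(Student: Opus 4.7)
The plan is to produce an explicit admissible control whose performance in the frictional problem is bounded below by $V^0(t,w,s) - C\e^{2m^*}$ on a neighborhood of $(t_0,w_0,s_0,h^0(t_0,w_0,s_0))$, and then to deduce the $\F_{comp}$ membership of $u^*$ from the polynomial growth given by Corollary \ref{cor:sol.example.u}. The candidate strategy is the one suggested in \eqref{eq:optimal.strat}--\eqref{eq:def.ce}: the trading rate $\dot H^\e_t$ is read off from $\varpi_\xi$ and the consumption is the frictionless one evaluated at the current state. Since power utility forbids ruin and Assumption \ref{assum:frictionless}(ii) only holds strictly inside the simplex, I would stop this candidate at the hitting time
\[
\tau^\e := T \wedge \inf\Bigl\{r\geq t : (W^\e_r,S_r,H^\e_r) \notin K\ \text{or}\ |\xi^\e_r|\geq \e^{-\rho}\Bigr\},
\]
where $K$ is a compact neighborhood of the frictionless trajectory on which \eqref{eq:simplex.condition} holds with a uniform margin, $\xi^\e_r = (H^\e_r - h^0(r,W^\e_r,S_r))/\e^{m^*}$, and $\rho\in(0,m^*)$ is chosen small enough. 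On $[\tau^\e,T]$ I stop trading ($\dot H = 0$) and liquidate against consumption in such a way that $c_s\geq 0$ and the terminal admissibility constraint $c_T\geq W^\e_T - \sum H^i_T S^i_T$ is satisfied; here I use that $R<1$ to absorb the terminal liquidation into utility without a logarithmic singularity.

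Next I would apply \Ito's formula to the Lyapunov function
\[
\Psi^\e(t,w,s,h) := V^0(t,w,s) - \e^{2m^*}\,\varpi\!\left(t,w,s,\tfrac{h-h^0(t,w,s)}{\e^{m^*}}\right),
\]
along $(W^\e,S,H^\e)$ on $[t,\tau^\e]$. Collecting terms by their order in $\e$ and using (i) the frictionless HJB equation \eqref{eq:V0.simple} for $V^0$, (ii) the first order condition \eqref{eq:foc.h0} for $h^0$, and (iii) the first corrector equation \eqref{1st-corrector-eq} for $\varpi$, the drift of $\Psi^\e$ should reduce to $-\e^{2m^*}a(t,W^\e_t,S_t)$ plus a remainder $\cR^\e_t$ that, by the homogeneity of $\Phi$ and the explicit factorization \eqref{eq:factorisation}, involves only $\xi^\e_t$, $\tilde\varpi$ and its derivatives scaled by positive powers of $\e$. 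The growth estimates \eqref{eq:growth.condition.varpi1}--\eqref{eq:growth.condition.varpi2} together with Lemma \ref{lem:bounded.second.derivative} force $\cR^\e_t = o(\e^{2m^*})$ uniformly on $\{r\leq\tau^\e\}$. Taking expectations, the martingale part vanishes (again using those growth bounds to justify integrability), and one obtains
\[
V^0(t,w,s) - \e^{2m^*}\varpi\bigl(t,w,s,\tfrac{h-h^0(t,w,s)}{\e^{m^*}}\bigr) \leq \E\Bigl[\Psi^\e(\tau^\e,\cdot)\Bigr] + \e^{2m^*}\E\!\int_t^{\tau^\e}\!a(r,W^0_r,S_r)\,dr + o(\e^{2m^*}).
\]
Combining with the performance identity for the frictional problem, and noting that by \eqref{eq:def.u.example} the source term integrates to $u(t,w,s) = O(1)$, this gives the desired upper bound on $(V^0 - V^\e)/\e^{2m^*}$ provided the boundary contribution at $\tau^\e$ is under control.

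The main obstacle is the last step: showing that the stopping event $\{\tau^\e < T\}$ contributes at most $o(\e^{2m^*})$ to the expected utility loss. This requires (a) a uniform-in-$\e$ moment estimate $\sup_\e \E\bigl[\varpi(r,W^\e_r,S_r,\xi^\e_r)\bigr] <\infty$ for $r\leq \tau^\e$, which via $\varpi \gtrsim |\xi|^{1+2/m}/w^{\cdots}$ translates to tightness of $\xi^\e$; (b) a Gronwall/ergodicity argument exploiting the strong mean reversion of $\xi^\e$ encoded by the coercivity of $\varpi$ (indeed $\varpi$ is the Lyapunov function alluded to in the introduction); and (c) the Markov inequality combined with the polynomial integrability of the frictionless utility to bound $\E[V^0(\tau^\e,W^\e_{\tau^\e},S_{\tau^\e})\one_{\tau^\e<T}]$. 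The explicit product structure of $\tilde\varpi$ from Lemma \ref{lem:growth.varpi.tilde} makes the required \Ito\ calculation feasible coordinate by coordinate, and the restriction $R\in(0,1)$ in Corollary \ref{cor:sol.example.u} is exactly what allows the terminal liquidation device above to be absorbed into the $o(\e^{2m^*})$ remainder; these technicalities I would defer to the Appendix as the authors indicate.

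Finally, to show $u^*\in\F_{comp}$, note that by the local uniform bound just established, $u^*(t,w,s) \leq \limsup_\e (V^0(t,w,s)-V^\e(t,w,s,h^0))/\e^{2m^*}$ remains finite pointwise and, by a standard covering argument on $\cD$, grows at most as fast as $V^0$ together with $u$ from \eqref{eq:def.u.example}. Since $V^0(t,w,s) = g(t)w^{1-R}/(1-R)$ with $g$ bounded and $u(t,w,s) = \lambda\bar g(t)w^{3mm^*-R}$ with $\bar g$ bounded (Remark \ref{rem:barg.bounded}), both are of the polynomial form required by \eqref{def:compclass}, and hence so is $u^*$.
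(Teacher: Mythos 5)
Your overall strategy coincides with the paper's: run an explicit candidate control, apply \Ito's formula to a Lyapunov function built from $V^0$ and the correctors, use the corrector equations to cancel the leading drift, and control the exit event separately. However, there are two genuine problems. First, your Lyapunov function carries the wrong power of $\e$: the corrector term must enter as $\e^{4m^*}\varpi\bigl(t,w,s,(h-h^0)/\e^{m^*}\bigr)$ (as in \eqref{eq:def.psie}), not $\e^{2m^*}\varpi$. With your scaling the three terms of $E_1$ do not line up at order $\e^{2m^*}$: the quadratic displacement penalty is $O(\e^{2m^*}|\xi|^2)$, the $\text{Tr}(c^{h^0}\varpi_{\xi\xi})$ term picks up $\e^{-2m^*}$ from differentiating twice in $\xi=\xi^\e$, and the friction term $\e^{-1}(V^\e_w)^{1-m}\Phi(s,V^\e_h)$ needs $V^\e_h\sim\e^{3m^*}\varpi_\xi$ to produce $\e^{3mm^*-1}=\e^{2m^*}$; all three match only with the $\e^{4m^*}$ prefactor.

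Second, and more seriously, the step you defer — controlling $\P(\tau^\e<T)$ — is precisely the content of Lemma \ref{lem:probexit} and is not obtainable by the route you sketch. Your stopping time is triggered by $|\xi^\e|\geq\e^{-\rho}$, and you propose a ``Gronwall/ergodicity argument'' for the tightness of $\xi^\e$. The paper explicitly explains (after \eqref{eq:def.tau.e}) why \Ito's formula applied to the displacement or its square does not yield a usable mean-reversion inequality here: the only workable Lyapunov functional is $\tilde\varpi(X^\e)$ itself, and even then one must stop the product $(W^\e)^{\frac{(m+2)m^*}{m}}(1+\tilde\varpi(X^\e))$ at a \emph{constant} barrier to avoid a hitting problem with a random boundary. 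The actual proof of Lemma \ref{lem:probexit} is a barrier-crossing argument: one shows the finite-variation part of this process can never reach half the barrier (the event $\cM^\e$ has probability zero for small $\e$, because the drift is strictly mean-reverting above an explicit level), and one bounds the martingale part by BDG plus a reverse H\"older inequality using moments of $\cX^\alpha$ obtained from a second \Ito\ computation. Gronwall is not applicable because the restoring drift is nonlinear and the diffusion coefficient of $\xi^\e$ is of order $\e^{-m^*}$. Note also that you only need $\P(\tau^\e<T-\e^{2m^*})=O(\e^{2m^*})$, giving an $O(1)$ (not $o(1)$) contribution to the renormalized loss — the Proposition asserts boundedness, not an expansion — and correspondingly the remainder $\cR^\e$ need only satisfy $|\cR^\e|/\e^{2m^*}\leq C(1+w^k)$ (Lemma \ref{lem:bound.T.Re}), not $\cR^\e=o(\e^{2m^*})$; your stronger claims are unproven and unnecessary. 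Finally, for $u^*\in\F_{comp}$ you need the constant in the local bound to be a function of $(t,w,s)$ with polynomial growth in $w$ and $s$ (and $w^{-1}$, $s^{-1}$), which is exactly what the moment estimates of Lemmas \ref{lem:conttidleG}--\ref{lem:moment.W0} deliver; a covering argument alone does not produce this.
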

The proof of \eqref{eq:control-deviation-bis}, which is very technical, is provided in the Appendix. 

\subsection{Further Examples of Impact Functions}\label{ss.otherexample}
\begin{example}\label{ex:2}
Another possible price impact is 
$f_j(s,\theta)=\ka_j{s^{\frac{m}{m-1}}_j\theta_j \left| \theta_j \right|^{\frac{2-m}{m-1}}}$ where the impact on each asset price depends on the trading on this asset. 
In this case, 
$\Phi$ only depends on the vector of the ratios $\left(\frac{x_1}{s_1},...,\frac{x_d}{s_d}\right)^\top$,
\begin{equation*}
\Phi(s,x)=\sum_{i=1}^d \frac{(m-1)^{m-1}}{m^m \kappa_i^{m-1}}\left|\frac{x_i}{s_i}\right|^{\color{black}m}=:\tilde \Phi\left(\frac{x}{s}\right).
\end{equation*}
Similarly to the main example with price impact function \eqref{eq:def.f.example} in the case of a Black-Scholes market, this scaling in $s$ allows to use Proposition~\ref{factoifsscale}. If $\s$ is not diagonal, the couple $(\tilde \varpi, \lambda)$ still solves an equation similar to \eqref{1st-corrector-eq-bis-cstt2} but they cannot be expressed as sums of one dimensional functions.  We are not able to compute $\lambda$ explicitly.
\end{example}
\begin{remark}
	{\color{black}We are not able to fully treat this example. Indeed, currently, there are no estimates available in the literature for the growth of the second derivative of $\tilde \varpi$ in \eqref{eq:boundtildevarpi}. These estimates are necessary to check the Assumption \ref{assum:2ndcorrect} and to proceed with the proof of the estimates of Section~\ref{s.appendix}.}
		
	However, under appropriate assumptions, one can use a refined version of the so called adjoint method to show for Example~\ref{ex:2}, that the second derivative of $\varpi$ satisfies the bound\footnote{This has been pointed out to us by Marco Cirant and is the topic of an ongoing work with him.} 
	\begin{equation*}
		\frac{|\varpi_{\xi\xi}(\xi)|}{(1+|\xi|^2)^{\frac{1}{2}+\frac{1}{m}} }{\leqslant}  K. 
	\end{equation*}
{\color{black}Then, one can use the methodology presented here to obtain the expansion of the utility loss. }
\end{remark}

 \begin{example}\label{ex:3}
The simplest choices for  $f_j$ are given by $f_j(s,\theta)=\ka_j{\theta_j \left|\theta_j \right|^{\frac{2-m}{m-1}}}$ 
and $f_j(s,\theta)=\ka_j{s_j\theta_j \left|\theta_j \right|^{\frac{2-m}{m-1}}}$ for some $\ka_j>0$ and $m>2$ that lead respectively to an additive and multiplicative price impact independent of the stock price. 
However, in these cases, the first corrector equation \eqref{1st-corrector-eq} has a complicated dependence on $t,w$ and $s$ and the factorization of Proposition~\ref{factoifsscale} is not possible. Thus, unlike the first two examples where one needed to solve a unique first corrector equation, it is necessary here to solve a first corrector equation for each $(t,w,s)\in \cD$ (and prove smooth dependence on $(t,w,s)$ of the solution, see Assumption~\ref{assum:2ndcorrect}).
\end{example}

\section{The Remainder Estimates}
\label{s.remainder}
In this section we gather some results that will be useful in the proofs of Proposition~\ref{prop:control-deviation} (especially  Lemma~\ref{lem:dev-control}), and Propositions~\ref{prop:supersol} and~\ref{prop:subsol}. Let us define the rescaled displacement function $\xi^\e$ for $\e>0$ as
\begin{equation}
\xi^\e:=\xi^\e(t,w,s,h)=\frac{h-h^0(t,w,s)}{\es}.
\end{equation}
	Let us denote by $(\nu^\e)_{\e>0}$ a family of functions in $\cC^{1,2,2,2}$ and $(\chi^\e)_{\e>0}$ be a family of functions in $\cC_m$ (recall Definition~\ref{def:Cm}) and define 
	\begin{equation}\label{eq:defpsiee}
	\psi^\e(t,w,s,h):=V^0(t,w,s)-\e^{2m^*}\left(\nu^\e (t,w,s,h)+\e^{2m^*}\chi^\e\left(t,w,s,\frac{h-h^0(t,w,s)}{\e^{m^*}}\right)\right).
	\end{equation}

\begin{prop}
	\label{prop:remainder.estimate}
	For $\e\in(0,1)$, let $\nu^\e\in\cC^{1,2,2,2}$ and $\chi^\varepsilon \in\cC_m$ be real-valued functions and $\psi^\e$ defined as in \eqref{eq:defpsiee} with $\nu^\e$ and $\chi^\varepsilon$. We assume that $\nu^\e$ and its first two derivatives admit bounds uniform in $\e>0$, i.e. $\sup_{\e>0}|\nu^\e_{a}(t,w,s,h)|+|\nu^\e_{bc}(t,w,s,h)|\leqslant D(t,w,s,h)$ where $a\in\{t,w,s,h\}$, $b,c\in\{w,s,h\}$ and $D$ is a locally bounded function.
	
	Then
	\begin{align}
	\cG^\e(\psi^\e)(t,w,s,h)&=R^\e(t,w,s,h,{\color{black}\psi^\e})+\e^{2m^*}\Big(E_2\big(t,w,s,\nu^\e_t, \nu^\e_w, \nu^\e_s,\nu^\e_{ww},\nu^\e_{ws},\nu^\e_{ss}\big)\notag\\
	&\quad+E_1\big(t,w,s,\xi^\e,\chi^\e_\xi(t,w,s,\xi^\e),\chi^\e_{\xi\xi}(t,w,s,\xi^\e)\big)\notag\\
	&\quad+(\pa_w \psi^\e)^{1-m}(\Phi(s,\chi^\e_\xi)-\Phi(s,\chi^\e_{\xi}+\ems \nu^\e_h))\Big),\label{eq:remainder.viscosity}
	\end{align}
	where $\cG^\e$ was defined in \eqref{eq:PDE-veps} and $R^\e$ satisfies the following properties on the set 
	\begin{align}\label{negpower}
	\{\pa_w \psi^\e>\zeta_1\}\cap \left\{\frac{V^0_w-\pa_w \psi^\e}{V^0_w}<\zeta_2\right\}\mbox{ for some }\zeta_1>0 \mbox{ and }\zeta_2<1.
	\end{align}
	\begin{enumerate}
		\item[(i)]Let $(\bar t, \bar w, \bar s)\in\cD$ and $r>0$. Assume that $\nu^\e$ and $\chi^\e$ depend on $\e>0$ but that $\nu^\e$ does not depend on $h$ and that the derivatives of the $\chi^\varepsilon$ up to second order are uniformly bounded  on $B_{r}(\bar t, \bar w, \bar s)\times\R^d$. Then it holds on $B_{r}(\bar t, \bar w, \bar s)\times\R^d$ that
		\begin{equation*}
			\frac{|R^\e(t,w,s,h,\psi^\e)|}{\e^{2m^*}}{\leq} C (1+|h-h^0(t,w,s)|^{2}),
		\end{equation*}
		for some $C>0$ independent of $t,w,s,h$ but depending on the second derivatives bounds for $\chi$ and $(\bar t, \bar w, \bar s,r)$.
		\item[(ii)] Let $(\bar t, \bar w, \bar s,\bar h)\in\cD\times\R^d$, and $r>0$. Assume that $\chi^\e$ does not depend on $\e$ (and write $\chi$). Then it holds on $B_{r}(\bar t, \bar w, \bar s, \bar h)$ that 
		\begin{equation*}
		\left|\frac{R^\e(t,w,s,h,\psi^\e)}{\e^{2m^*}}\right|{\leq} C \left(1+|\xi^\e(t,w,s)|^2\right)^{\frac{1}{2}+\frac{1}{m}},
		\end{equation*}
		for some $C>0$ depending only on $r, \bar t, \bar w$, $ \bar s$ and $ \bar h$.
		\item[(iii)] Assume that $\chi^\e$ does not depend on $\e$ (and write it $\chi$). If the set
		\begin{align*}
			K_1=\left\{\left(t^\e,w^\e,s^\e,\frac{h^\e-h^0(t^\e,w^\e,s^\e)}{\e^{m^*}}\right):\e>0\right\}\subset \cD\times\R^d
		\end{align*} is bounded, then
		\begin{equation*}
		\left|\frac{R^\e(t^\e,w^\e,s^\e,h^\e,\psi^\e)}{\e^{2m^*}}\right|{\leq} C \e^{m^*},
		\end{equation*}
		for some $C>0$ depending only on the bound of the set $K_1$.
	\end{enumerate}
\end{prop}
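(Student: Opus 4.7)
The plan is to compute $\cG^\e(\psi^\e)$ explicitly via chain rule, organize the resulting terms by their order in $\e^{m^*}$, and identify the pieces that assemble into $\e^{2m^*}E_1 + \e^{2m^*}E_2$ plus the displayed $\Phi$-difference term; everything else is absorbed into $R^\e$. The essential scaling observation is that differentiating the composite $\chi^\e(t,w,s,\xi)$ with $\xi=(h-h^0(t,w,s))/\e^{m^*}$ in $h$ carries a factor $\e^{-m^*}$, so $\pa_h\psi^\e=-\e^{2m^*}\nu^\e_h-\e^{3m^*}\chi^\e_\xi$ and $\pa_{hh}[\e^{4m^*}\chi^\e]$ contributes an $\e^{2m^*}\chi^\e_{\xi\xi}$ piece, while derivatives in $(w,s)$ bring in cross factors involving $h^0_w$, $h^0_s$, and their second derivatives.

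First I would analyze $\cG^\e(V^0)(\cdot,h)$. Since $V^0$ is independent of $h$, the $\Phi$-term vanishes; writing $h=h^0+\e^{m^*}\xi$ and Taylor expanding, the zeroth order vanishes by \eqref{eq:V0.simple}, while the $\e^{m^*}$ linear-in-$\xi$ terms cancel thanks to the first-order condition \eqref{eq:foc.h0}. What remains is $-\e^{2m^*}\tfrac{V^0_{ww}}{2}|\sum_j \xi_j s_j\sigma^j|^2$, the first term of $E_1$. Second, expanding $-\e^{2m^*}L\nu^\e$ (where $L$ denotes the linear part of $\cG^\e$) together with the Taylor expansion $\tilde U(\pa_w\psi^\e)=\tilde U(V^0_w)-\e^{2m^*}\tilde U'(V^0_w)\nu^\e_w+\text{rem.}$ assembles, when evaluated at $h^0$, into $\e^{2m^*}E_2$; the $\e^{m^*}$-order discrepancies from using $h=h^0+\e^{m^*}\xi$ in place of $h^0$ inside $L$ are absorbed into $R^\e$. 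Third, the chain-rule term $-\e^{4m^*}L[\chi^\e(\cdot,\xi)]$ produces at leading order $\e^{2m^*}\tfrac12\text{Tr}(c^{h^0}\chi^\e_{\xi\xi})$, after using \eqref{eq:qvarH} to recognize $c^{h^0}$ as the quadratic variation of $h^0(t,W^0_t,S_t)$, matching the last term of $E_1$; all $\chi^\e_{w\xi}, \chi^\e_{s\xi}, \chi^\e_{ww}, \chi^\e_{ws}, \chi^\e_{ss}$ cross contributions carry at least one extra factor of $\e^{m^*}$ and enter $R^\e$. Finally, the $m$-homogeneity of $\Phi$ (Assumption~\ref{assum:phi}) and the identity $3m^*m-1=2m^*$ give $(\pa_w\psi^\e)^{1-m}\Phi(s,\pa_h\psi^\e)/\e = \e^{2m^*}(\pa_w\psi^\e)^{1-m}\Phi(s,-\chi^\e_\xi-\e^{-m^*}\nu^\e_h)$; adding and subtracting $\Phi(s,-\chi^\e_\xi)$ and expanding $(\pa_w\psi^\e)^{1-m}=(V^0_w)^{1-m}+\text{rem.}$ under \eqref{negpower} produces the $-(V^0_w)^{1-m}\Phi(s,-\chi^\e_\xi)$ piece of $E_1$, the displayed $\Phi$-difference term in \eqref{eq:remainder.viscosity}, and a controlled residual.

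For the three bounds on $R^\e$ I would examine, in each of (i)--(iii), which of these remainders are active. In (i), the hypothesis $\nu^\e_h\equiv 0$ makes the $\Phi$-difference identically zero and kills the $\e^{-m^*}\nu^\e_h$ contributions; uniform bounds on $\chi^\e$ and its first two derivatives on the ball control every $\chi^\e$-cross term by a constant times $\e^{m^*}$; the leading quadratic-in-$\xi$ contribution from $\cG^\e(V^0)$ is already inside $E_1$, so the residual polynomial growth in $h-h^0=\e^{m^*}\xi$ is at most quadratic, yielding the stated $1+|h-h^0|^2$ bound. In (ii), the $\cC_m$ growth estimates \eqref{eq:estimate.Cvarpi} combined with the extra $\e^{m^*}$ weight of each cross term give residuals of order $(1+|\xi^\e|^2)^{1/2+1/m}$ after division by $\e^{2m^*}$, while Taylor expansions of $\Phi$ around $-\chi^\e_\xi$ inside the $\Phi$-difference are controlled using the $(m-1)$-homogeneity of $\Phi_x$ and the corresponding $\cC_m$ growth of $\chi^\e_\xi$. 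In (iii), on a bounded set of $(t,w,s,\xi^\e)$ all $\chi$-derivatives are uniformly bounded, every remaining term in $R^\e$ carries an explicit additional factor of $\e^{m^*}$, and thus $R^\e/\e^{2m^*}=O(\e^{m^*})$.

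The main obstacle is the combinatorial bookkeeping of chain-rule cross terms produced by differentiating $\chi^\e(t,w,s,\xi(t,w,s,h))$ twice in $(t,w,s)$: each yields mixed $\chi^\e_{a\xi}$ and $\chi^\e_{\xi\xi}$ contributions weighted by $h^0_w$, $h^0_s$, $h^0_{ww}$, etc., at differing powers of $\e^{m^*}$, and matching each of them against the class $\cC_m$ growth bounds \eqref{eq:estimate.Cvarpi} is what makes bound (ii) delicate. Similarly, the Taylor remainders for $\tilde U$ and for $x\mapsto x^{1-m}$ have to be controlled uniformly; the condition \eqref{negpower} is used precisely to ensure that these expansions remain valid and that the denominators stay bounded away from zero.
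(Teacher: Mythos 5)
Your proposal is correct and follows essentially the same route as the paper: the paper also expands $\cG^\e(\psi^\e)$ term by term (presented there as the drift of $\psi^\e$ along the feedback-controlled state), cancels the zeroth order via $\cG^0(V^0)=0$ and the order-$\e^{m^*}$ terms via the first-order condition \eqref{eq:foc.h0}, identifies $E_1$, $E_2$ and the $\Phi$-difference exactly as you do using the $m$-homogeneity of $\Phi$ and the identity $3mm^*-1=2m^*$, and collects the same classes of cross terms ($T(\cdot;h)-T(\cdot;h^0)$, the $\tilde U$ and $(\cdot)^{1-m}$ Taylor remainders under \eqref{negpower}, and the $\chi_{a\xi}$, $\chi_{\xi\xi}$ cross contributions) into $R^\e$ with the same case-by-case bounds in (i)--(iii). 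No gap.
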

\begin{remark}
Because the solution of the first corrector equation $\varpi$ is not homogeneous in $\xi$ (unlike in \cite{MMKS}) we have to include a dependence of $\nu$ on $\e$ in the result that we will use in Section~\ref{s.semi}.
\end{remark}
\begin{proof} We drop the dependence of $\nu^\e$ and $\chi^\e$ in $\e$ to simplify notations. Consider first a function $\psi^\e$ as in \eqref{eq:defpsiee}, and define the following feedback control function
	\begin{align}
		c^\e(t,w,s,h) &= -\tilde{U}'(\partial_w\psi^\e(t,w,s,h)),\label{eq:def.c.e}\\
		\tilde \theta^\e(t,w,s,h) &:=\e^{-1}\Phi_x\left(s,-\frac{\pa_h\psi^\e \left(t,w,s,h\right)}{\partial_w\psi^\e(t,w,s,h)}\right)\notag\\
		&=\ems \left(\partial_w\psi^\e\left(t,w,s,h\right)\right)^{1-m}\notag\\
		&\qquad\qquad\qquad\Phi_x\left(s,\chi_\xi\left(t,w,s,\frac{h-h^0(t,w,s)}{\varepsilon^{m^*}}\right)+\ems\nu_h(t,w,s,h)\right).\label{eq:def.theta.e}
	\end{align}
	{\color{black}Note that the trading rate $\tilde\theta^\e$ and the consumption $c^\e$ are functions of $t,w,s$ and $h$ and these functions are the maximizers of the Hamiltonian in the HJB equation \eqref{eq:HJB-veps} evaluated at $\psi^\e$.}
	
	The wealth process and strategy obtained using these controls and started at $(t,w,s,h)\in \cD\times\R^d$ are denoted $\tilde W^{\e,t,w,s,h}$ and $\tilde H^{\e,t,w,s,h}$. We have
	\begin{align*}
		d\tilde H^{\e,t,w,s,h}_u=\tilde\theta^\e(u,\tilde W^{\e,t,w,s,h}_u,S_u,\tilde H^{\e,t,w,s,h}_u)du.
	\end{align*}
	We denote the drift function  of the diffusion $\tilde\Psi^{\e,t,w,s,h}_u=\psi^\e\left(u, \tilde W^{\e,t,w,s,h}_u, S_u, \tilde H^{\e,t,w,s,h}_u\right)$ by $\tilde\mu^{\psi^\e}$. It holds
	\begin{align}\label{eq:tildemuformu}
	\tilde \mu^{\psi^\e}&(t,w,s,h) = V^0_t-\ets \nu_t-\efs \chi_t+\eths\left(h^0_t\right)^\top\chi_\xi\\
	&\notag+(V^0_w-\ets\nu_w-\efs\chi_w+\eths (h^0_w)^\top\chi_{\xi}) rw-c^\e\partial_w\psi^\e\\
	&\notag+(V^0_w-\ets\nu_w-\efs\chi_w+\eths (h^0_w)^\top\chi_{\xi})\sum_{j=1}^d h_js_j(\mu^j-r)-(\partial_w\psi^\e) \sum_{j=1}^{d}\tilde\theta^{\e,j}f^j(s,\e \tilde\theta^\e)\\
	&\notag+\frac{1}{2}(V^0_{ww}-\ets\nu_{ww}-\efs\chi_{ww})\sum_{i,j=1}^{d}h_ih_js_is_j\sigma^i\cdot \sigma^j\\
	&\notag+\left(\eths ((h^0_{w})^\top\chi_{w\xi}+\frac{1}{2}(h^0_{ww})^\top\chi_{\xi})-\frac{1}{2}\ets(h^0_w)^\top\chi_{\xi\xi}h^0_w\right)\sum_{i,j=1}^{d}h_ih_js_is_j\sigma^i\cdot \sigma^j\\
	&\notag+\cL^s (V^0-\ets\nu-\efs\chi)+\eths\sum_{i=1}^{d}(h^0_{s_i})^\top\chi_{\xi}s_i\mu^i\\
	&\notag+\frac{1}{2}\sum_{i,j=1}^{d}(\eths((h^0_{s_j})^\top\chi_{\xi s_i}+(h^0_{s_i})^\top\chi_{\xi s_j}+(h^0_{s_is_j})^\top\chi_{\xi})-\ets(h^0_{s_i})^\top\chi_{\xi \xi}h^0_{s_j})s_is_j\sigma^i\cdot\sigma^j\\
	&\notag+\sum_{i,j=1}^{d}\Big(V^0_{ws_i}-\ets\nu_{ws_i}-\efs\chi_{ws_i}+\eths ( (h^0_{s_i})^\top\chi_{\xi w} + (h^0_{w})^\top\chi_{\xi s_i} + (h^0_{ws_i})^\top\chi_{\xi} )\\
	&\notag\qquad\qquad -\ets (h^0_w)^\top\chi_{\xi\xi}h^0_{s_i} \Big)s_ih_js_j\sigma^i\cdot\sigma^j-(\eths\chi_{\xi}+\ets \nu_h)^\top\tilde\theta^\e.
	\end{align}
	Note that the functional $\cG^\e$ applied to $\psi^\e$ gives
	\begin{align}
	\label{eq:Ge.mu.psie}
	\cG^\e(\psi^\e)&(t,w,s,h) =-\Big(\tilde\mu^{\psi^\e}(t,w,s,h)+U(c^\e)\Big),
	\end{align}
	since the choices made for $\tilde \theta^\e$ and $c^\e$ provide the equalities $\tilde{U}(\partial_w\psi^\e)=U(c^\e)-\partial_w\psi^\e c^\e$ and $(\tilde\theta^\e)^\top\partial_h \psi^\e-\partial_w\psi^\e\sum_{j=1}^{d}\tilde \theta^{\e,j}f^j(s,\e\tilde\theta^\e)=\e^{-1}(\partial_w\psi^\e)^{1-m}\Phi(s,\partial_h\psi^\e)$.
	
	We now reorder the terms in $\tilde\mu^{\psi^\e}$ and group them to facilitate the analysis. The quadratic variation of $h^0$ is given by (omitting the argument $(t,w,s)$)
	\begin{equation}
	\label{eq:quad.var.h0}
	\big(c^{h^0}\big)_{l,m} = \sum_{i=1}^{n}\bigg(\sum_{j=1}^{d}\left(h^ {0,l}_wh^{0,j}+h^{0,l}_{s_{j}}\right)s_j\sigma^{j}_i\bigg)\bigg(\sum_{k=1}^{d}\left(h^ {0,m}_wh^{0,k}+h^{0,m}_{s_{k}}\right)s_k\sigma^{k}_i\bigg).
	\end{equation}
	The formula for the trace of the quadratic variation of $h^0$ multiplied by the Hessian of $\chi$ with respect to $\xi$ is given by
	\begin{align}
	\text{Tr}(c^{h^0}\chi_{\xi\xi}) 
	=&(h^0_w)^\top\chi_{\xi\xi}h^0_w\sum_{i,j=1}^{d}h^0_ih^0_js_is_j\sigma^i\cdot\sigma^j+2\sum_{i,j=1}^{d}(h^0_{s_i})^\top\chi_{\xi\xi}h^0_wh^0_js_is_j\sigma^i\cdot\sigma^j\notag\\
	&+\sum_{i,j=1}^{d}(h^0_{s_i})^\top\chi_{\xi\xi}h^0_{s_j}s_is_j\sigma^i\cdot\sigma^j\label{eq:trace.D2.h0}.
	\end{align}
	The definition of $E_1$, $E_2$ and $T$ are given in Equations \eqref{correc1}, \eqref{correc2} and \eqref{eq:defT}. We regroup
	\begin{align}
	&\tilde\mu^{\psi^\e}_t(w,s,h) = V^0_t+\tilde{U}(V^0_w)+\cL^s V^0\notag\\
	&+ V^0_w r w+ V^0_w\sum_{j=1}^{d} h_j^0 s_j (\mu^j-r)+\sum_{i,j=1}^{d}V^0_{w s_i} s_i h_j^0 s_j\sigma^j \cdot\sigma^i +\frac{ V^0_{ww}}{2} \sum_{i,j=1}^{d} h^0_ih^0_j s_is_j \sigma^i\sigma^j\notag\\
	&+V^0_w\sum_{j=1}^{d} (h_j-h_j^0) s_j (\mu^j-r)+\sum_{i,j=1}^{d}V^0_{w s_i}  (h_j-h_j^0)s_i s_j\sigma^i \cdot\sigma^j\notag\\
	&+\frac{ V^0_{ww}}{2} \sum_{i,j=1}^{d} \left(h_ih_j-h^0_ih^0_j\right) s_is_j \sigma^i\cdot\sigma^j+\ets\Big( (V^0_w)^{1-m}\Phi(s,\chi_\xi)-\frac{1}{2}\text{Tr}(c^{h^0}\chi_{\xi\xi}) j\Big)\notag\\
	&+\ets\Big(-\nu_t-\cL^s\nu-\nu_wrw-\widetilde{U}'(V^0_w)\nu_w-\nu_w\sum_{j=1}^{d} h_j^0 s_j (\mu^j-r)-\sum_{i,j=1}^{d}\nu_{w s_i} s_i h_j^0 s_j\sigma^i \cdot\sigma^j\notag\\
	&-\frac{\nu_{ww}}{2} \sum_{i,j=1}^{d} h^0_ih^0_j s_is_j \sigma^i\cdot\sigma^j\Big) -R^\e -U(c^\e)-(\pa_w \psi^\e)^{1-m}(\Phi(s,\chi_\xi)-\Phi(s,\chi_{\xi}+\ems \nu_h))\Big)\notag\\
	&=-\ets\Big(E_1(t,w,s,\ems(h-h^0),\chi_\xi,\chi_{\xi\xi})+E_2(t,w,s,\nu_t,\nu_w,\nu_{s},\nu_{ww},\nu_{ws},\nu_{ss})\notag\\
	&-(\pa_w \psi^\e)^{1-m}(\Phi(s,\chi_\xi)-\Phi(s,\chi_{\xi}+\ems \nu_h))\Big)-R^\e-U(c^\e).\label{eq:psi.e.expnsion}
	\end{align}
	The first line is obtained using that $c^0$ is the pointwise maximizer (in $w$ and $s$) of $c\mapsto U(c)-V^0_wc$, i.e. $U(c^0)-V^0_wc^0=\tilde{U}(V^0_w)$. Recall that $\cG^0(V^0)=0$ (cf. \eqref{eq:v0}), therefore the first two lines of the right-hand side of the above equation cancel. Thanks to the first order condition \eqref{eq:foc.h0}, the third line (and a part of the fourth) of the right-hand side can then be rewritten as $\frac{1}{2}V^0_{ww}\sum_{i,j=1}^{d}(h_ih_j-2h_ih^0_j+h^0_ih^0_j)s_is_j\sigma^i\cdot\sigma^j=\frac{1}{2}V^0_{ww}|\sum_{i=1}^{d}(h_i-h^0_i)s_i\sigma^i|^2$. This term, added to the fourth line gives $-E_1(t,w,s,\ems(h-h^0),\chi_\xi,\chi_{\xi\xi})$. The forth and fifth lines give $-E_2(t,w,s,\nu,\nu_t,\nu_w,\nu_{s},\nu_{ww},\nu_{ws},\nu_{ss})$. The terms remaining or forcefully introduced into the above equality are gathered in the function $R^\e$, that can be further decomposed into
	\begin{align*}
	R^\e=I^{\e,\tilde{U}}+\ets\Big(&T(t,w,s,\nu_w,\nu_{ww},\nu_{ws};h)-T(t,w,s,\nu_w,\nu_{ww},\nu_{ws};h^0)\\
	&+I^{\e,1}+I^{\e,2}+I^{\e,3}+I^{\e,4}\Big),
	\end{align*}
	where the $I^\e$'s are functions defined below in~\eqref{eq:def.IU}, \eqref{eq:def.I1}, \eqref{eq:def.I2}, \eqref{eq:def.I3} and \eqref{eq:def.I4}.

	We now bound each term constituting $R^\e$. Note that the sets of items (i), (ii) and (iii) are such that $(t,w,s)$ is in a compact.

	$T$ is quadratic in $h$ and does not depend on $\chi$, therefore the bounds in all three cases are obvious. 
	
	Similarly, although the derivatives of $\tilde U$ have negative powers in their argument, the assumptions \eqref{negpower}, the boundedness of $(t,w,s)$ on the considered set and the fact that $\tilde U$, $\nu$, $\chi$, $V^0$ and $\psi^\e$ are continuous allow us to treat these functions as Lipschitz-continuous with respect to the three first variables. Thus, defining $I^{\e,\tilde{U}}$ and using a Taylor expansion twice, there exist $\eta_1,\eta_2\in(0,1)$ depending on $(t,w,s,h)$ with
	\begin{align}
		I^{\e,\tilde{U}}(t,w,s,h,\nu,\chi) =& \tilde{U}(V^0_w)-\tilde{U}\big(\partial_w\psi^\e\big)-\ets \tilde{U}'(V^0_w)\nu_w\label{eq:def.IU}\\
		=&\ets\bigg[\tilde U'\bigg(V^0_w\bigg(1+\eta_1\frac{\pa_w\psi^\e-V^0_w}{V^0_w}\bigg)\bigg)-\tilde U'(V^0_w)\bigg]\nu_w\notag\\
		&+\efs\tilde U'\bigg(V^0_w\bigg(1+\eta_1\frac{\pa_w\psi^\e-V^0_w}{V^0_w}\bigg)\bigg)\pa_w\chi\notag\\
		=&\ets\nu_w\eta_2\big(\pa_w\psi^\e-V^0_w\big)\tilde U''\bigg(V^0_w\bigg(1+\eta_2\frac{\pa_w\psi^\e-V^0_w}{V^0_w}\bigg)\bigg)\notag\\
		&+\efs\tilde U'\bigg(V^0_w\bigg(1+\eta\frac{\pa_w\psi^\e-V^0_w}{V^0_w}\bigg)\bigg)\pa_w\chi.\notag
	\end{align}
	Then, given that $\chi\in\cC_m$  and for $0<\e<1$ the following bounds obtain on the set defined in~\eqref{negpower}.
	\begin{align*}
		\emts|I^{\e,\tilde{U}}(t,w,s,h,\nu,\chi)| &\leq C(\ets|\chi_w|+\es|\chi_\xi|)(t,w,s,\xi^\e)\\
		&\leq C(\es+\ets |\xi^\e|^{1+\frac{2}{m}}+\es|\xi^\e|^{\frac{2}{m}}).
	\end{align*}
	The expression for $I^{\e,1}$ is 
	\begin{align}
		I^{\e,1}(t,w,s,h,\chi) = \ets\Big(&\chi_t+\cL^s\chi+\chi_{w}rw+\chi_{w}\sum_{j=1}^{d}h_js_j(\mu^j-r)\label{eq:def.I1}\\
		&+\sum_{i,j=1}^{d}\chi_{ws_i}s_is_jh_j\sigma^i\cdot\sigma^j+\frac{1}{2}\chi_{ww}\sum_{i,j=1}^{d}h_ih_js_is_j\sigma^i\cdot\sigma^j\Big).\notag
	\end{align}
	If $(t,w,s,h)$ is in a bounded set the fact $\chi\in \cC_m$ implies 
	$$|I^{\e,1}(t,w,s,h,\chi)|\leq C\ets(1+|\xi^\e|^{1+\frac{2}{m}}),$$
	and if $\chi$ has bounded derivatives (as in item (i)), 
	 $$|I^{\e,1}(t,w,s,h,\chi)|\leq C\ets(1+|h-h^0(t,w,s)|^2).$$
	The expression for $I^{\e,2}$ is 
	\begin{align}
	I^{\e,2}(t,w,s,h,\nu,\chi) =((V^0_w)^{1-m}- (\partial_w\psi^\e)^{1-m})\Phi(s,\chi_{\xi})\ \label{eq:def.I2}.
	\end{align}
	Similarly as in \eqref{eq:def.IU}, \eqref{negpower} allows us to treat negative powers of $\pa_w \psi^\e$ as a Lipschitz function. If $(t,w,s,h)$ takes value on a bounded set, $((V^0_w)^{1-m}- (\partial_w\psi^\e)^{1-m})$ can be uniformly bounded for $h-h^0(t,w,s)$ small enough and we obtain that 
	$$|I^{\e,2}(t,w,s,h,\nu,\chi)|\leq C\ets\Phi(s,\chi_{\xi})\leq C\ets(1+ |\xi^\e|^2)\leq C(1+ |\xi^\e|^{1+\frac{2}{m}}).$$
	If the derivatives of $\chi$ are bounded the inequality becomes
	$$|I^{\e,2}(t,w,s,h,\nu,\chi)|\leq C|((V^0_w)^{1-m}- (\partial_w\psi^\e)^{1-m})| \leq C\ets.$$
	The definition of $I^{\e,3}$ and $I^{\e,4}$ are
	\begin{align}
	I^{\e,3}(t,w,s,h,\chi) =& -\es\Big((h^0_t)^\top\chi_\xi+(h^0_w)^\top\chi_\xi(rw+\sum_{i=1}^{d}h_is_i(\mu^i-r))+\sum_{i=1}^{d}(h^0_{s_i})^\top\chi_{\xi}s_i\mu^i\notag\\
	&\qquad+\big((h^0_{w})^\top\chi_{w\xi}+\frac{1}{2}(h^0_{ww})^\top\chi_{\xi}\big)\sum_{i,j=1}^{d}h_ih_js_is_j\sigma^i\cdot \sigma^j\label{eq:def.I3}\\
	&\qquad+\frac{1}{2}\sum_{i,j=1}^{d}((h^0_{s_j})^\top\chi_{\xi s_i}+(h^0_{s_i})^\top\chi_{\xi s_j}+(h^0_{s_is_j})^\top\chi_{\xi})s_is_j\sigma^i\cdot\sigma^j\notag\\
	&\qquad+\sum_{i,j=1}^{d}\big((h^0_{s_i})^\top\chi_{\xi w} + (h^0_{w})^\top\chi_{\xi s_i} + (h^0_{ws_i})^\top\chi_{\xi} \big)h_is_is_j\sigma^i\cdot\sigma^j\Big)\notag
	\end{align}
	and
	\begin{align}
	I^{\e,4}(t,w,s,h,\chi) =& \frac{1}{2}(h^0_w)^\top\chi_{\xi\xi}h^0_w\sum_{i,j=1}^{d}(h_ih_j-h^0_ih^0_j)s_is_j\sigma^i\cdot\sigma^j&\label{eq:def.I4}\\
	&+\sum_{i,j=1}^{d}(h^0_{s_i})^\top\chi_{\xi\xi}h^0_w(h_j-h^0_j)s_is_j\sigma^i\cdot\sigma^j.\notag
	\end{align}
	Again, if $(t,w,s,h)$ is on a bounded domain, $h^0\in\cC^{1,2,2}$ (cf. Assumption~\ref{assum:frictionless}) and $\chi\in \cC_m$ imply the bounds 
	\begin{align*}
		|I^{\e,3}(t,w,s,h,\chi)| \leq C\es (1+|\xi^\e|^{\frac{2}{m}})\leq C,\ \mbox{and}\ |I^{\e,4}(t,w,s,h,\chi)| \leq C (1+|\xi^\e|^{1+\frac{2}{m}}),
	\end{align*}
	and if the derivatives of $\chi$ are bounded 
	\begin{align*}
		|I^{\e,3}(t,w,s,h,\chi)| \leq& C\es (1+|h-h^0(t,w,s)|^2)\\ |I^{\e,4}(t,w,s,h,\chi)| \leq&C (1+|h-h^0(t,w,s)|^2).
	\end{align*}
	Then, results (i) and (ii) are a consequence of the estimates above.	
	Note that if additionally $\xi^\e$ is bounded as in (iii), all terms except $I^{\e,4}$ are bounded by $C \es$. For $I^{\e,4}$ we have 	$h-h^0=\es \xi^\e$. Thus, we also obtain the bound $|I^{\e,4}(t,w,s,h,\chi)|\leq C\es$ thanks to the boundedness of $\xi^\e$, and $0<\e<1$. This conclude the proof of the remainder estimates.
\end{proof}
\section{Proof of the Main Theorem}\label{s.semi}
In this section we prove Theorem~\ref{thm:expansion}.
\subsection{The Semilimits}

Let $V^\e_*$ (resp. $V^{\e,*}$) be the lower (resp. upper) semicontinuous envelope of the function $V^\e$. Assumption~\ref{assum:bound} allows us to define the following semilimits for $(t,w,s)\in\cD$
\begin{align}\label{eq:def-semilimits-upper2}
u^*(t,w,s):=\limsup_{\e\downarrow 0, (t',w',s',h')\to (t,w,s,h^0(t,w,s))}  \frac{V^0(t',w',s')-V^\e_*(t',w',s',h')}{\e^{2m^*}},\\
u_*(t,w,s):=\liminf_{\e\downarrow 0, (t',w',s',h')\to (t,w,s,h^0(t,w,s))} \frac{V^0(t',w',s')-V^{\e,*}(t',w',s',h')}{\e^{2m^*}},\label{eq:def-semilimits-lower2}
\end{align}
where $m^*=\frac{1}{3m-2}$.
By definition $u_*$ is upper semicontinuous, $u^*$ is lower semicontinuous and they satisfy
\begin{equation*}
0{\leq} u_*{\leq} u^*.
\end{equation*}
Define additionally for $\e>0$ 
\begin{align}
		u^{\e*}(t,w,s,h)=\frac{V^0(t,w,s)-V^{\e}_*(t,w,s,h)}{\ets}\geqslant 0\label{eq:ue*u},\\
		u^\e_*(t,w,s,h)=\frac{V^0(t,w,s)-V^{\e,*}(t,w,s,h)}{\ets}\geqslant 0.\label{eq:ue*l}
\end{align}

In the case of a market with sublinear price impact as here (i.e. subquadratic transaction costs), any finite position can be liquidated fast enough so that the loss of utility is negligible at the leading order ($O(\ets)$) and the penalty due to holding the ``wrong'' number of shares ($H^\e_t\neq H^0_t$, or $h\neq h^0(t,w,s)$) is of order strictly higher than $2m^*$, unlike in \cite{MMKS} where the authors had to introduce the adjusted semi-limits.

Finally recall the definition of the function $\xi^\e: (t,w,s,h)\in\cD\times \R^d\mapsto \frac{h-h^0(t,w,s)}{\es}$, which gives the renormalized displacement from the frictionless optimal strategy.  

\subsection{The Supersolution Property}\label{ss.supsol}
\begin{prop}\label{prop:supersol}
Under the assumption of Theorem~\ref{thm:expansion}, the function $u_*$ is a lower semicontinuous viscosity supersolution of the second corrector equation \eqref{2nd-corrector-eq}. 
\end{prop}

\begin{proof}The proof is based on \cite[Proof of Proposition 6.4]{MMKS}. Lower semicontinuity of the function holds by the definition of the function. We now show the viscosity property. 
Let $(t^0,w^0,s^0)\in \cD$ and $\phi\in \cC^{1,2,2}(\cD)$ such that $(t^0,w^0,s^0)$ is a strict minimizer of $u_*-\phi$ on $\cD$ and that $u_*(t ^0,w^0,s^0)-\phi(t^0,w^0,s^0) = 0$. Then, for all $(t,w,s)\in\cD\backslash\{(t^0,w^0,s^0)\}$ the following holds
\begin{align}\label{eq:supsolineq}
0=u_*(t ^0,w^0,s^0)-\phi(t^0,w^0,s^0)<u_*(t,w,s)-\phi(t,w,s).
\end{align}
We want to show that $\phi$ is a supersolution of the second corrector equation \eqref{2nd-corrector-eq} at the point $(t^0,w^0,s^0)$, in other words $-E_2(t^0,w^0,s^0,\phi_t,\phi_w,\phi_s,\phi_{ww},\phi_{ws},\phi_{ss})\geqslant a(t^0,w^0,s^0)$.

By the definition of $u_*$ (see \eqref{eq:def-semilimits-upper2}), there exists a family $(t^\e,w^\e,s^\e,h^\e)\in\cD\times\R^d$ such that 
\begin{align}
&(t^\e,w^\e,s^\e,h^\e)\to (t^0,w^0,s^0 ,h^0(t^0,w^0,s^0)),\quad u^\e_*(t^\e,w^\e,s^\e,h^\e)\to u_*(t^0,w^0,s^0)\notag\\
&\mbox{and }p^\e:=u^\e_*(t^\e,w^\e,s^\e,h^\e)-\phi(t^\e,w^\e,s^\e)\to 0\ \mbox{ as }\e\to 0.\label{eq:def.pe.supersol}
\end{align}
By Assumptions~\ref{assum:frictionless} (continuity of $h^0$ on $\cD$), and~\ref{assum:2ndcorrect} (continuity of $\varpi$ on $\cD\times\R^d$), there exist $\e_0,r_0>0$ such that $\bar B_{r_0}(t^0,w^0,s^0)\subset\cD$ and for all $\e\in(0,\e_0]$ we have 
\begin{align}
\label{eq:ineq.twshe.r0.super}&|(t^\e,w^\e,s^\e)-(t^0,w^0,s^0)|{\leq} \frac{r_0}{2},\qquad |p_\e|{\leq} 1,
\end{align}
Let $M=\sup\{\phi(t,w,s)~|~(t,w,s)\in \bar B_{r_0}(t^0,w^0,s^0)\}+4$ and note that \eqref{eq:ineq.twshe.r0.super} implies $|(t,w,s)-(t^\e,w^\e,s^\e)|^{4}\geqslant (r_0/2)^4$ on $\pa B_{r_0}(t^0,w^0,s^0)$, for $0<\e\leqslant \e_0$.
We can now choose $c_0>0$ such that $c_0 (r_0/2)^4\geqslant M$ and define $\varphi^\e$ and $\varphi^0$ on $\R^{d+2}$ by
\begin{align}\label{eq:def.varphi.e}
\varphi^\e(t,w,s)&=\phi(t,w,s)+p^\e-c_0|(t,w,s)-(t^\e,w^\e,s^\e)|^{4},\\
\varphi^0(t,w,s)&=\phi(t,w,s)-c_0|(t,w,s)-(t^0,w^0,s^0)|^{4}.\notag
\end{align}
Given the choice of these constants, \eqref{eq:ineq.twshe.r0.super} gives
\begin{equation}
\label{eq:boundary.B0.super}
\varphi^\e(t,w,s){\leq} -3~\mbox{ on }~\pa B_{r_0}(t^0,w^0,s^0)~\mbox{ for all }~\e\in(0,\e_0],
\end{equation}
and by definition of $p^\e$ (see \eqref{eq:def.pe.supersol}), we have 
\begin{equation}
\label{eq:ue*l.varphi}
- u^\e_*(t^\e,w^\e,s^\e,h^\e)+\varphi^\e(t^\e,w^\e,s^\e)=0.
\end{equation}
\emph{Claim: There exist a neighborhood around $0$ in $\R^d$ and $C>0$ constant such that on this neighborhood $\sup_{(t,w,s)\in \bar B_{r_0}(t^0,w^0,s^0)}\varpi(t,w,s,\xi)\leqslant C|\xi|^2$ and $\sup_{(t,w,s)\in \bar B_{r_0}(t^0,w^0,s^0)}\varpi(t,w,s,\xi)_\xi\leqslant C|\xi|$.}

This follows from the first corrector equation and the fact that $\varpi(t,w,s,0)=\varpi_{\xi}(t,w,s,0)=0$ for any $(t,w,s)$. $\varpi$ is a solution of the first corrector equation \eqref{1st-corrector-eq}. Then it holds for all $(t,w,s)\in \bar B_{r_0}(t^0,w^0,s^0)$ that $\frac{1}{2}\text{Tr}(c^{h^0}(t,w,s)\varpi_{\xi\xi}(t,w,s,0))=a(t,w,s)$. The inequality $|X|{\leq} \text{Tr}(X)$ for symmetric non-negative matrices, the fact that $\varpi$ is convex, $c^{h^0}$ is positive definite and continuous (cf. Assumption~\ref{assum:frictionless}), and the continuity and positivity of $a$ yield $|\varpi_{\xi\xi}(t,w,s,\xi)|{\leq} C$ on a neighbourhood $\cN$ of $0$ in $\R^d$, for some $C>0$ constant. Then, on $\bar B_{r_0}(t^0,w^0,s^0)\times\cN$ we have $|\varpi_\xi(t,w,s,\xi)|{\leqslant} C|\xi|$ and $|\varpi(t,w,s,\xi)|{\leqslant} C|\xi|^2$, and the claim is proved.

For fixed $(t^0,w^0,s^0)$ and $r_0$, the continuity of $E_2$, $c^{h^0}$, $a$, $\Phi_x$, and $\sigma$, the regularity of $\phi$ and $\varpi$, the $(m-1)$-homogeneity of $\Phi_x$, the fact that $\varpi\in\cC_m$ by Assumption~\ref{assum:2ndcorrect} and the last claim allow us to define the following positive finite constants:
\begin{align}
K_0:=&1+\sup \left\{|E_2(t,w,s,\varphi^0_t,\varphi^0_w,\varphi^0_s,\varphi^0_{ww},\varphi^0_{ws},\varphi^0_{ss})|:(t,w,s)\in \bar B_{r_0}(t^0,w^0,s^0)\right\},\label{eq:def.K0}\\
K_\Sigma:=&1+\sup \left\{\left|c^{h^0}(t,w,s) \right|:(t,w,s)\in \bar B_{r_0}(t^0,w^0,s^0)\right\}\label{eq:def.Ksigma},
\end{align}
\begin{align}
K_\varpi:=&1+\sup \left\{\Big(\frac{|\varpi_{\xi}(\cdot,\xi)|}{|\xi|^{\frac{2}{m}}}+\frac{|\varpi_{\xi\xi}(\cdot,\xi)|+|\varpi (\cdot,\xi)|}{|\xi|^{1+\frac{2}{m}}}\Big)(t,w,s):(t,w,s)\in \bar B_{r_0}(t^0,w^0,s^0),   \ |\xi|{\geq} 1\right\}\notag\\
&+\sup \Big\{\frac{|\varpi(t,w,s,\xi)|}{|\xi|^{1+\frac{2}{m}}}+|\varpi_{\xi\xi}(t,w,s,\xi)|:(t,w,s)\in \bar B_{r_0}(t^0,w^0,s^0),|\xi|{\leq} 1\Big\},\label{eq:def.Kvarpi}
\end{align}
\begin{align}
&K_\varpi':=1+\sup \left\{\left(\frac{|\varpi_{\xi}(\cdot,\xi)|}{|\xi|}+\frac{|\varpi_{\xi\xi}(\cdot,\xi)|+|\varpi (\cdot,\xi)|}{|\xi|^{2}}\right)(t,w,s):(t,w,s)\in \bar B_{r_0}(t^0,w^0,s^0),  \ |\xi|{\geq} 1 \right\} \notag\\
&+\sup \left\{\left(\frac{|\varpi(\cdot,\xi)|}{|\xi|^2}+\frac{|\varpi_\xi(\cdot,\xi)|}{|\xi|}+|\varpi_{\xi\xi}(\cdot,\xi)|\right)(t,w,s):(t,w,s)\in \bar B_{r_0}(t^0,w^0,s^0)~|\xi|{\leq} 1\right\},\label{eq:def.Kvarpi'}
\end{align}
\begin{align}
K_a:=&1+\sup \left\{\left|a(t,w,s)\right|:(t,w,s)\in \bar B_{r_0}(t^0,w^0,s^0)\right\},\\
K_{\Phi_x}:=&1+\sup\left\{\frac{|\Phi_x(s,x)|}{1\vee |x|^{m-1}}:|s-s^0|\leq r_0, x\in\R^d \right\}\label{eq:def.Phia},\\
\gamma_v:=&\inf\bigg\{-\frac{\pa_{ww} V^0(t,w,s)}{2} \bigg|\sum_{j=1}^{d} \xi_j s_j \sigma^j(s)\bigg|^2:(t,w,s)\in \bar B_{r_0}(t^0,w^0,s^0), |\xi|=1\bigg\}.\label{eq:def.gamma.v}
\end{align}
Additionally, it holds by Assumption~\ref{assum:frictionless} that $V^0_w>0$ on $\cD$ so there exists $\iota>0$ such that
\begin{equation} 
	\frac{1}{\iota} {\leq} V^0_w(t,w,s){\leq} \iota\mbox{ for all }(t,w,s)\in \bar B_{r_0}(t^0,w^0,s^0).\label{eq:estimate.iota}
\end{equation}

Similarly to \cite[Lemma A.2]{PST} and \cite[Proof of Proposition 6.4]{MMKS}, there exists $C^*>0$ such that for all $\eta>0$ we can find a function $h^\eta\in \cC^\infty(\R^d,[0,1])$ and $a_\eta>1$ satisfying 
\begin{align}
h^\eta=1\mbox{ on }\bar B_1(0),\quad h^\eta=0\mbox{ on }\bar B^c_{a_\eta}(0),\quad |h^\eta_x(x)|\wedge|xh^\eta_x(x)|{\leq} \eta \quad \mbox{ and }\quad |x|^2|h^\eta_{xx}|{\leq} C^*.\label{eq:property.h.eta}
\end{align}
Fix $\d$ and $\eta$ in $(0,1)$. 
Note that due to the inequality ${1+\frac{2}{m}}<2$ there exists $\xi^{*,\d}>0$, the unique positive solution of
\begin{equation}\label{eq:defdeltastar}
(\xi^{*,\d})^2= \frac{2(\xi^{*,\d})^{1+\frac{2}{m}}dK_\Sigma K_\varpi+2\left(K_a+  K_0\right)+dK_{\Sigma}K'_{\varpi}(C^*+2)}{(1-(1-\d)^m)\gamma_v}.
\end{equation}
Define also the functions
\begin{align} 
H^{\eta,\delta}&:\xi\in \R^d\mapsto (1-\d)h^\eta\left(\frac{\xi}{\xi^{*,\d}}\right)\label{eq:def.H.eta.delta},\\
\psi^{\e,\eta,\delta}(t,w,s,h)&:=V^0(t,w,s)-\e^{2m^*}\varphi^\e(t,w,s)-\e^{4m^*}(\varpi H^{\eta,\d})(t,w,s,\xi^\e),\notag\\
I^{\e,\eta,\d}(t,w,s,h)&:=\frac{V^{\e,*}(t,w,s,h)-\psi^{\e,\eta,\d}(t,w,s,h)}{\e^{2m^*}},\notag
\end{align}
where we make a slight abuse of notation for brevity in writing $(\varpi H^{\eta,\d})(t,w,s,\xi^\e)$ for $$H^{\eta,\d}(\xi^\e(t,w,s,h))\varpi(t,w,s,\xi^\e(t,w,s,h))~\mbox{ for }~(t,w,s,h)\in\cD\times\R^d.$$ We want here to use $\psi^{\e,\eta,\delta}$ as a test function for the viscosity subsolution property of $V^{\e,*}$ (see Assumption~\ref{assum:Ve.viscosity} and Equation~\eqref{eq:PDE-veps}). For this, we need interior maximizers of the functions $V^{\e,*}-\psi^{\e,\eta,\delta}$ (or equivalently of $I^{\e,\eta,\delta}$) in $\bar B_{r_0}(t^0,w^0,s^0)\times \R^d$. However the supremum of $I^{\e,\eta,\delta}$ may not be attained or lie on $\pa\bar B_{r_0}(t^0,w^0,s^0)\times \R^d$ and we therefore need to modify $\psi^{\e,\eta,\delta}$. 

First, note that for the elements of the family $\left(t^\e,w^\e,s^\e,h^\e\right)_{0<\e{\leq} \e_0}$ we have by \eqref{eq:ue*l.varphi} and non-negativity of $\varpi$ (see Assumption~\ref{assum:2ndcorrect})
\begin{equation}
\label{eq:lower.bound.I}
I^{\e,\eta,\d}(t^\e,w^\e,s^\e,h^\e)\geqslant 0.
\end{equation}

Defining $\e_{\eta,\d}:=\e_0\wedge 1\wedge (K_\varpi (a_\eta \xi^{*,\delta})^{1+\frac{2}{m}})^{-1/(2m^*)}$, similarly to \cite[Proof of Proposition 6.4]{MMKS}, we obtain the following inequality on $\bar B_{r_0}(t^0,w^0,s^0)\times\R^d$ for all $0<\e{\leq} \e_{\eta,\d}$
\begin{align}
I^{\e,\eta,\d}(t,w,s,h)&{\leqslant} \varphi^\e(t,w,s)\notag\\
&+\ets |\xi^\e(t,w,s,h)|^{1+\frac{2}{m}} \frac{\varpi\left(t,w,s,\xi^\e(t,w,s,h)\right)}{ |\xi^\e(t,w,s,h)|^{1+\frac{2}{m}}}\one_{\{|\xi^\e(t,w,s,h)|{\leqslant} a_\eta \xi^{*,\delta} \}}\notag\\
&{\leqslant}\varphi^\e(t,w,s)+\ets (a_\eta \xi^{*,\delta})^{1+\frac{2}{m}}K_\varpi \leqslant \varphi^\e(t,w,s)+1,\label{eq:ineq.I.varphie}
\end{align}
where the first inequality holds by \eqref{eq:ue*l} and the estimate $0{\leq} H^{\eta,\delta}(\xi){\leq} \one_{\{|\xi|{\leq} a_\eta \xi^{*,\delta} \}}$ and the second by the definition \eqref{eq:def.Kvarpi} of $K_\varpi$ and the choice of $\e_{\eta,\delta}$.

Since the right-hand side of \eqref{eq:ineq.I.varphie} is uniformly bounded on $\bar B_{r_0}(t^0,w^0,s^0)$ for $0<\e{\leq} \e_{\eta,\d}$, we can pick $(\hat t^{\e,\eta,\d},\hat w^{\e,\eta,\d},\hat s^{\e,\eta,\d},\hat h^{\e,\eta,\d})\in B_{r_0}(t^0,w^0,s^0)\times \R^d$ such that
\begin{equation}
\label{eq:presup.I}I^{\e,\eta,\d}(\hat t^{\e,\eta,\d},\hat w^{\e,\eta,\d},\hat s^{\e,\eta,\d},\hat h^{\e,\eta,\d}) \geqslant \sup_{B_{r_0}(t^0,w^0 ,s^0)\times \R^d}\left\{I^{\e,\eta,\d}(t,w,s,h)\right\}-\frac{\e^{2m^*}}{2}.
\end{equation}

We now add a penalization to $\psi^{\e,\eta,\delta}$ in the direction of $h$. Let $f\in \cC^{2}(\R_+, [0,1])$ be a function satisfying for some $c>0$
\begin{equation}\label{eq:condition.f}
f(0)=1,~f(x)=0~ \mbox{ for }~|x|\geqslant 1,~ 0{\leq} f{\leq} 1~\mbox{ and }~|f'(x)|{\leqslant} c|x|~\mbox{ in a neighborhood of }0.
\end{equation}
Then, define the functions for $\eta\in(0,1]$, $\d\in(0,1)$ and $\e\in(0,\e_{\eta,\d})$
\begin{align*}
\bar \psi^{\e,\eta,\d}(t,w,s,h) =& \psi^{\e,\eta,\delta}(t,w,s,h)-\e^{4m^*}f(|h-\hat h^{\e,\eta,\d}|),\\
\bar I^{\e,\eta,\delta}(t,w,s,h) =& \frac{V^{\e,*}(t,w,s,h)-\bar\psi^{\e,\eta,\d}(t,w,s,h)}{\e^{2m^*}}\\
 =& I^{\e,\eta,\delta}(t,w,s,h) +\ets f(|h-\hat h^{\e,\eta,\d}|),
\end{align*}
and the compact set 
\begin{equation*}
Q^{\e,\eta,\d}:=\{(t,w,s,h): (t,w,s)\in \bar B_{r_0}(t^0,w^0,s^0),~|h-\hat h^{\e,\eta,\d}|{\leq} 1\}.
\end{equation*}
\emph{Claim: there exists $(\tilde t^{\e,\eta,\d}, \tilde w^{\e,\eta,\d},\tilde s^{\e,\eta,\d},\tilde h^{\e,\eta,\d})\in Int(Q^{\e,\eta,\d})$ a maximiser of $V^{*,\e}-\bar \psi^{\e,\eta,\d}$ on\linebreak $B_{r_0}(t^0,w^0,s^0)\times\R^d$}.

The proof of this claim is similar to \cite[Proof of Proposition 6.4]{MMKS}(Step 3). Since $f(0)=1$ the definition of $\bar I^{\e,\eta,\d}$ leads to 
$$\bar I^{\e,\eta,\d}(\hat t^{\e,\eta,\d},\hat w^{\e,\eta,\d},\hat s^{\e,\eta,\d},\hat h^{\e,\eta,\d}) =I^{\e,\eta,\d}(\hat t^{\e,\eta,\d},\hat w^{\e,\eta,\d},\hat s^{\e,\eta,\d},\hat h^{\e,\eta,\d}) +\ets.$$
Furthermore, on $(\bar B_{r_0}(t^0,w^0,s^0)\times\R^d)\backslash Q^{\e,\eta,\d}$, it holds $\bar I^{\e,\eta,\d}(t,w,s,h) =I^{\e,\eta,\d}(t,w,s,h)$. This, with \eqref{eq:presup.I}, gives
\begin{equation}
	\sup_{\bar B_{r_0}(t^0,w^0 ,s^0)\times \R^d}\left\{\bar I^{\e,\eta,\d}(t,w,s,h)\right\}=\sup_{Q^{\e,\eta,\d}}\left\{\bar I^{\e,\eta,\d}(t,w,s,h)\right\}.
\end{equation}
The function $\bar I^{\e,\eta,\d}$ is upper-semicontinuous and $Q^{\e,\eta,\d}$ is compact, so there exists a maximizer $(\tilde t^{\e,\eta,\d},\tilde w^{\e,\eta,\d},\tilde s^{\e,\eta,\d},\tilde h^{\e,\eta,\d})$ of $V^{*,\e}-\bar \psi^{\e,\eta,\d}$ on $Q^{\e,\eta,\d}$. It is also the maximizer on $\bar B_{r_0}(t^0,w^0 ,s^0)\times \R^d$. Now, let $(t,w,s,h)\in\pa\bar B_{r_0}(t^0,w^0,s^0)\times\R^d$. Then, by the bounds $0{\leq} f{\leq} 1$, and $0<\e_{\eta,\delta}\leqslant 1$ and the two inequalities \eqref{eq:boundary.B0.super} and \eqref{eq:ineq.I.varphie} we have
\begin{equation*}
	\bar I^{\e,\eta,\d}(t,w,s,h){\leqslant} I^{\e,\eta,\d}(t,w,s,h)+\ets{\leqslant} -2+\ets{\leqslant} -1.
\end{equation*}
On the other hand, in the interior of $\bar B_{r_0}(t^0,w^0,s^0)\times\R^d$, for the family $\left(t^\e,w^\e,s^\e,h^\e\right)_{0<\e{\leq} \e_{\eta,\d}}$, it holds by \eqref{eq:lower.bound.I} and definition of $\bar I^{\e,\eta,\d}$
\begin{equation*}
	\bar I^{\e,\eta,\d}(t^\e,w^\e,s^\e,h^\e)\geqslant I^{\e,\eta,\d}(t^\e,w^\e,s^\e,h^\e)\geqslant 0
\end{equation*}
and the maximizer is therefore a point of $B_{r_0}(t^0,w^0,s^0)\times\R^d$ and the claim is proved.

Thus, for $\e\in(0,\e_{\eta,\d}]$,  we have a $\cC^{1,2,2,2}(\cD\times\R^d, \R)$ function $\bar \psi^{\e,\eta,\d}$ and  a local strict maximizer of $V^{\e,*}-\bar \psi^{\e,\eta,\d}$ denoted $(\tilde t^{\e,\eta,\d},\tilde w^{\e,\eta,\d},\tilde s^{\e,\eta,\d},\tilde h^{\e,\eta,\d})$. Since $V^{\e,*}$ is a subsolution of \eqref{eq:PDE-veps}, it holds
\begin{equation}\label{eq:visco.subsol.Ge}
	\cG^\e(\bar \psi^{\e,\eta,\d})(\tilde t^{\e,\eta,\d},\tilde w^{\e,\eta,\d},\tilde s^{\e,\eta,\d},\tilde h^{\e,\eta,\d}){\leq} 0.
\end{equation}
Denoting 
\begin{align*}
&\tilde \varpi^{\e,\eta,\d}(t,w,s,\xi)=(\varpi H^{\eta,\d})(t,w,s,\xi)+f(|\e^{m^*}\xi+h^0(t,w,s)-\hat h^{\e,\eta,\d}|)\\
&\mbox{and }~\tilde \xi^{\e,\eta,\d} :=\xi^\e(\tilde t^{\e,\eta,\d},\tilde w^{\e,\eta,\d},\tilde s^{\e,\eta,\d},\tilde h^{\e,\eta,\d})
\end{align*}
the remainder estimate \eqref{eq:remainder.viscosity} of Proposition~\ref{prop:remainder.estimate} for $\bar \psi^{\e,\eta,\d}$ (in that case the function $\nu$ of Proposition~\ref{prop:remainder.estimate} is $\varphi^\e$ and does not depend on $h$) with \eqref{eq:visco.subsol.Ge} yields
\begin{align}
&E_2(\tilde t^{\e,\eta,\d},\tilde w^{\e,\eta,\d},\tilde s^{\e,\eta,\d},\varphi^\e_t , \varphi^\e_s, \varphi^\e_w,\varphi^\e_{sw},\varphi^\e_{ww},\varphi^\e_{ss})\label{eq:E1.E2.Re}\\
&+E_1(\tilde t^{\e,\eta,\d},\tilde w^{\e,\eta,\d},\tilde s^{\e,\eta,\d},\tilde \xi^{\e,\eta,\d},\tilde \varpi^{\e,\eta,\d}_\xi,\tilde \varpi^{\e,\eta,\d}_{\xi\xi})+\frac{R^\e(\tilde t^{\e,\eta,\d},\tilde w^{\e,\eta,\d},\tilde s^{\e,\eta,\d},\tilde h^{\e,\eta,\d},\bar \psi^{\e,\eta,\d})}{\e^{2m^*}}{\leq} 0.\notag
\end{align}
The family $\left\{\left(t^{\e,\eta,\d},\tilde w^{\e,\eta,\d},\tilde s^{\e,\eta,\d}\right)~|~\e\in(0,\e_{\eta,\d}],~\eta\in(0,1),~\d\in(0,1)\right\}$ is bounded. For fixed $\eta\in(0,1),~\d\in(0,1)$,  due to the choice of $f$ and $H^{\eta,\d}$, the assumptions of claim (i) in Proposition~\ref{prop:remainder.estimate} holds (up to reducing $\e_0>0$ for $(\tilde t^{\e,\eta,\d},\tilde w^{\e,\eta,\d},\tilde s^{\e,\eta,\d},\tilde h^{\e,\eta,\d})$ to be in the set defined in~\eqref{negpower}) and we obtain the existence of $C>0$ that may depend on $\eta,\delta\in (0,1)$ but not on $\e>0$ such that 
\begin{equation}
\frac{|R^\e(\tilde t^{\e,\eta,\d},\tilde w^{\e,\eta,\d},\tilde s^{\e,\eta,\d},\tilde h^{\e,\eta,\d},\bar \psi^{\e,\eta,\d})|}{\e^{2m^*}}{\leq} C(1+|\e^{m^*}\tilde \xi^{\e,\eta,\d}|^2).\label{eq:bound.Re.xitilde}
\end{equation}
\emph{Claim: Fix $\eta\in(0,1)$ and $\d\in(0,1)$, the family $\{\tilde \xi^{\e,\eta,\d} :\e\in (0, \e^{\eta,\d})\}$ is bounded by $C_{\tilde \xi}$.} To prove this, we assume that there exists a sequence $(\e_n)_{n\in\N}$ such that $\lim_{n\to \infty}\tilde \xi^{\e_n,\eta,\d}=\infty$ with $\e_n\in (0, \e^{\eta,\d}]$ for all $n\in\N$ and $\e_n\to 0$ as $n\to \infty$ {\color{black}(indeed, the function $\xi^\e$ is continuous and the family $\left\{\left(t^{\e,\eta,\d},\tilde w^{\e,\eta,\d},\tilde s^{\e,\eta,\d}\right)~|~\e\in(0,\e_{\eta,\d}],~\eta\in(0,1),~\d\in(0,1)\right\}$ is bounded and $\e_n$ bounded away from $0$ would imply that  the family $\{\tilde \xi^{\e,\eta,\d} :\e\in (0, \e^{\eta,\d})\}$ is bounded)}. Without loss of generality we can assume that the sequence $(\e_n)$ is decreasing. By definition~\eqref{eq:def.H.eta.delta} of $H^{\eta,\d}$  it holds that $\varpi H^{\eta,\d}$ and its derivatives vanish at $(t,w,s,\tilde \xi^{\e_n,\eta,\d})$ for $(t,w,s)\in B_{r_0}(t^0,w^0,s^0)$ and $n$ large enough (say $n\geqslant n_1$). Then we have for $n\geqslant n_1$,
\begin{equation*}
	\tilde \varpi^{\e_n,\eta,\d}_\xi=\pa_\xi f(|\e^{m^*}\xi+h^0(t,w,s)-\hat h^{\e,\eta,\d}|),\qquad\tilde \varpi^{\e_n,\eta,\d}_{\xi\xi}=\pa_{\xi\xi} f(|\e^{m^*}\xi+h^0(t,w,s)-\hat h^{\e,\eta,\d}|).
\end{equation*}
Furthermore, it holds for $\eta\in(0,1)$, $\d\in(0,1)$ and $\e\in(0,\e^{\eta,\d})$,
\begin{align*}
\pa_\xi f(|\e^{m^*}\xi+h^0(t,w,s)-\hat h^{\e,\eta,\d}|)=&\e^{m^*} f'(|\e^{m^*}\xi+h^0(t,w,s)-\hat h^{\e_n,\eta,\d}|)^\top D^{\e_n,\eta,\d}_1,\\
\pa_{\xi\xi }f(|\e^{m^*}\xi+h^0(t,w,s)-\hat h^{\e,\eta,\d}|)=&\e^{2m^*} f''(|\e^{m^*}\xi+h^0(t,w,s)-\hat h^{\e_n,\eta,\d}|)D^{\e_n,\eta,\d}_1(D^{\e_n,\eta,\d}_1)^\top\\
&+\e^{2m^*} f'(|\e^{m^*}\xi+h^0(t,w,s)-\hat h^{\e_n,\eta,\d}|)D^{\e_n,\eta,\d}_2,
\end{align*}
where $\es D^{\e_n,\eta,\d}_1$ and $\ets D^{\e,\eta,\d}_2$ are the gradient and the Hessian of the function $\xi\in\R^d\mapsto |\es\xi+h^0(t,w,s)-\hat h^{\e_n,\eta,\d}|$ (at the point $(t,w,s,\xi)$ which is omitted in the equation above). With the assumption made on $f'$ (see \eqref{eq:condition.f}), we have for $(t,w,s,\xi)\in\bar B_{r_0}(t^0, w^0, s^0)\times\R^d$
\begin{align}
	\left|\pa_\xi f(|\e^{m^*}\xi+h^0(t,w,s)-\hat h^{\e,\eta,\d}|)\right|{\leq}\e^{m^*} C_f\mbox{ and}\notag\\
	\left|\pa_{\xi\xi} f(|\e^{m^*}\xi+h^0(t,w,s)-\hat h^{\e,\eta,\d}|)\right|{\leq}\e^{2m^*}C_f,\label{eq:tilde.varpi.xi.xixi.supersol}
\end{align}
where $C_f$ is a positive constant that can be chosen independent of $\eta$, $\delta$ and $\e$. Then, by $m$-homogeneity of $\Phi$ (remember that $m>2$) and linearity of the trace, there exists $C_f'>0$ such that 
\begin{equation*}
\left|-\left|V^0_w(t,w,s)\right|^{1-m}\Phi\left(s,\tilde \varpi^{\e_n,\eta,\d}_\xi(t,w,s,\xi)\right)+\frac{1}{2}\text{Tr}\left(c^{h^0}(t,w,s)\tilde \varpi^{\e_n,\eta,\d}_{\xi\xi}(t,w,s,\xi\right)\right|{\leq} \ets C_f'
\end{equation*}
on $\bar B_{r_0}(t^0, w^0, s^0)\times\R^d$. This finally provides the estimate for $E_1$ (see \eqref{correc1})
\begin{align}\label{eq:bound.E1.xitilde}
	&E_1\left(\tilde t^{\e_n,\eta,\d},\tilde w^{\e_n,\eta,\d},\tilde s^{\e_n,\eta,\d}, \tilde \xi^{\e_n,\eta,\d},\tilde \varpi^{\e_n,\eta,\d}_\xi,\tilde \varpi^{\e_n,\eta,\d}_{\xi\xi}\right)\\
	&\geqslant -C_f' \e^{2m^*}-\frac{ V^0_{ww}(\tilde t^{\e_n,\eta,\d},\tilde w^{\e_n,\eta,\d},\tilde s^{\e_n,\eta,\d})}{2} \bigg|\sum_{j=1}^{d} \tilde \xi^{\e_n,\eta,\d}_j \tilde s^{\e_n,\eta,\d}_j \sigma^j(\tilde s^{\e_n,\eta,\d})\bigg|^2\notag\\
	&\geqslant -C'_f \e^{2m^*}+\gamma_v \big| \tilde \xi^{\e_n,\eta,\d} \big|^2.\notag
\end{align}

Note that $E_2(t,w,s,\varphi^\e_t, \varphi^\e_w, \varphi^\e_s, \varphi^\e_{ww}, \varphi^\e_{ws},\varphi^\e_{ss})$ does not depend on $\xi$ (or $h$) and is bounded on $\bar B_{r_0}(t^0, w^0, s^0)$ for $\e\in(0,1)$. Hence, putting together \eqref{eq:E1.E2.Re}, \eqref{eq:bound.Re.xitilde} and \eqref{eq:bound.E1.xitilde}, we obtain for some positive constant $C$
\begin{equation*}
\gamma_v \big| \tilde \xi^{\e,\eta,\d} \big|^2 {\leqslant} C(1+|\e^{m^*}\tilde \xi^{\e,\eta,\d}|^2),
\end{equation*}
which contradicts the convergence of $\tilde\xi^{\e_n,\eta,\d}$ to infinity while $\e_n$ converges to $0$ when $n$ goes to $\infty$. 
Thus, the claim is proved, and there exists a subsequence in $\e>0$ such that 
\begin{align*}
\tilde t^{\e,\eta,\d}\to \tilde t^{\eta,\d},~\tilde w^{\e,\eta,\d}\to \tilde w^{\eta,\d},~ \tilde s^{\e,\eta,\d}\to\tilde s^{\eta,\d},~ \tilde h^{\e,\eta,\d}\to\tilde h^{\eta,\d}=h^0(\tilde t^{\eta,\d},\tilde w^{\eta,\d},\tilde s^{\eta,\d})\\
~\mbox{ and }~ \xi^\e(\tilde t^{\e,\eta,\d},\tilde w^{\e,\eta,\d},\tilde s^{\e,\eta,\d},\tilde h^{\e,\eta,\d})&\to \tilde\xi^{\eta,\d}.
\end{align*}
Using this convergence, the continuity of the functions $\phi$, $(\varphi^\e)_{\e>0}$, $\varpi$, $H^{\eta,\d}$, $V^0_{ww}$, $V^0_w$, $\Phi$,$c^{h^0}$, $R^\e$, $E_1$ and $E_2$ on their domain, the fact that $(\varphi^\e)_{\e>0}$ converges uniformly on $\bar B_{r_0}(t^0,w^0,s^0)$ to $\varphi^0$ as $\e\to 0$, the claim (iii) of  Proposition~\ref{prop:remainder.estimate} (note that again $(\tilde t^{\eta,\d},\tilde w^{\eta,\d},\tilde s^{\eta,\d},\tilde h^{\eta,\d})$ is in the set defined in~\eqref{negpower}, as we have taken the limit $\e\to 0$ of a sequence of elements of the set in \eqref{negpower}), and taking the limit of \eqref{eq:E1.E2.Re} as $\e\to 0$ we obtain the inequality
\begin{align}\label{eq:supeta}
E_1(\tilde t^{\eta,\d}, \tilde w^{\eta,\d},\tilde s^{\eta,\d},\tilde \xi^{\eta,\d},&\pa_\xi (\varpi H^{\eta,\d}),\pa_{\xi\xi}(\varpi H^{\eta,\d}))\\
&{\leqslant} -E_2(\tilde t^{\eta,\d}, \tilde w^{\eta,\d},\tilde s^{\eta,\d},\varphi^0, \varphi^0_s, \varphi^0_w,\varphi^0_{ww},\varphi^0_{ws},\varphi^0_{ss}).\notag
\end{align}
We used as well estimate \eqref{eq:tilde.varpi.xi.xixi.supersol}, to conclude that on $B_{r_0}(t^0,w^0,s^0)\times B_{C_{\tilde \xi}}(0)$, $\tilde \varpi^{\e_n,\eta,\d}_{\xi}$ and $\tilde \varpi^{\e_n,\eta,\d}_{\xi\xi}$ converge uniformly to $\pa_\xi (\varpi H^{\eta,\d})$ and $\pa_{\xi\xi}(\varpi H^{\eta,\d})$ respectively as $\e\to0$ (where $C_{\tilde \xi}$ bounds the family $\{\tilde \xi^{\e,\eta,\d} :\e\in (0, \e^{\eta,\d})\}$ by the previous claim).

Note that $(\tilde t^{\eta,\d},\tilde w^{\eta,\d},\tilde s^{\eta,\d})\in \bar B_{r_0}(t^0,w^0,s^0)$. Direct computation of $\pa_{\xi\xi}(H^{\eta,\d}\varpi)$, the properties \eqref{eq:property.h.eta} of $h^\eta$'s derivatives, the definitions of $K_0,K_{\varpi},K'_{\varpi}$, and $K_\Sigma$ in~\eqref{eq:def.K0}-\eqref{eq:def.Kvarpi'}, the elementary equation $\text{Tr}(A)\leqslant d|A|$, the inequality $0<\delta<1$ and \eqref{eq:supeta} yield the following inequality at the point $P^{\eta,\delta}:=(\tilde t^{\eta,\d}, \tilde w^{\eta,\d},\tilde s^{\eta,\d},\tilde \xi^{\eta,\d})$
\begin{align}
&-\frac{V^0_{ww}(\tilde t^{\eta,\d}, \tilde w^{\eta,\d},\tilde s^{\eta,\d})}{2} \bigg|\sum_{j=1}^{d} \tilde\xi^{\eta,\d}_j \tilde s^{\eta,\d}_j \sigma^j(\tilde s^{\eta,\d})\bigg|^2\notag\\
&-|V^0_w(\tilde t^{\eta,\d}, \tilde w^{\eta,\d},\tilde s^{\eta,\d})|^{1-m}\Phi\left(\tilde s^{\eta,\d},{\pa_\xi (\varpi H^{\eta,\d})}\right)(P^{\eta,\d})\notag\\ 
&{\leqslant} -E_2(\tilde t^{\eta,\d}, \tilde w^{\eta,\d},\tilde s^{\eta,\d},\varphi^0_t, \varphi^0_w, \varphi^0_s,\varphi^0_{ww},\varphi^0_{ws},\varphi^0_{ss}) -\frac{1}{2}\text{Tr}\big(c^{h^0} \pa_{\xi\xi}(\varpi H^{\eta,\d})\big)\big(P^{\eta,\d}\big)\notag\\
& {\leqslant} K_0+\frac{d}{2}K_{\Sigma}\big(K'_{\varpi}C^*+2\eta K'_{\varpi} +K_{\varpi}\big(\big|\tilde\xi^{\eta,\d}\big|^{1+\frac{2}{m}}\vee 1\big)\big).\label{eq:ineq.pre.visco.supersol1}
\end{align}

We also write this term as 
\begin{align}
&-\frac{V^0_{ww}(\tilde t^{\eta,\d}, \tilde w^{\eta,\d},\tilde s^{\eta,\d})}{2} \bigg|\sum_{j=1}^{d} \tilde\xi^{\eta,\d}_j \tilde s^{\eta,\d}_j \sigma^j(\tilde s^{\eta,\d})\bigg|^2\notag\\
&\qquad-(V^0_w)^{1-m}\Phi\big(\tilde s^{\eta,\d},\varpi_\xi H^{\eta,\d}+\varpi H^{\eta,\d}_\xi\big)\big(P^{\eta,\d}\big)\notag\\
&=-\frac{V^0_{ww}(\tilde t^{\eta,\d}, \tilde w^{\eta,\d},\tilde s^{\eta,\d})}{2} \bigg|\sum_{j=1}^{d} \tilde\xi^{\eta,\d}_j \tilde s^{\eta,\d}_j \sigma^j(\tilde s^{\eta,\d})\bigg|^2-(V^0_w)^{1-m}\Phi\big(\tilde s^{\eta,\d},\varpi_\xi H^{\eta,\d}\big)\big(P^{\eta,\d}\big)\notag\\
&\quad+(V^0_w)^{1-m}\big(\Phi\big(\tilde s^{\eta,\d},\varpi_\xi H^{\eta,\d}\big)-\Phi\big(\tilde s^{\eta,\d},\varpi_\xi H^{\eta,\d}+\varpi H^{\eta,\d}_\xi\big)\big)\big(P^{\eta,\d}\big)\notag\\
&=:I_1^{\eta,\d}+I_2^{\eta,\d}.\label{eq:ineq.pre.visco.supersol2}
\end{align}
We first bound $I^{\eta,\d}_1$,
\begin{align}
I_1^{\eta,\d}&\geqslant-\frac{V^0_{ww}(\tilde t^{\eta,\d}, \tilde w^{\eta,\d},\tilde s^{\eta,\d})}{2} \bigg|\sum_{j=1}^{d} \tilde\xi^{\eta,\d}_j \tilde s^{\eta,\d}_j \sigma^j(\tilde s^{\eta,\d}_j)\bigg|^2-(V^0_w)^{1-m}(1-\d)^m \Phi\left(\cdot,\varpi_\xi \right)(P^{\eta,\d})\notag\\
&=-(1-(1-\d)^m)\frac{V^0_{ww}(\tilde t^{\eta,\d}, \tilde w^{\eta,\d},\tilde s^{\eta,\d})}{2} \bigg|\sum_{j=1}^{d} \tilde\xi^{\eta,\d}_j \tilde s^{\eta,\d}_j \sigma^j(s^{\eta,\d}_j)\bigg|^2 \notag\\
&\qquad-(1-\d)^m\bigg(\frac{1}{2}\text{Tr}\big(c^{h^0}  \pa_{\xi\xi}\varpi\big)(P^{\eta,\d}) -a(\tilde t^{\eta,\d}, \tilde w^{\eta,\d},\tilde s^{\eta,\d})\bigg)\notag\\
&\geqslant (1-(1-\d)^m)\gamma_v \big|\tilde\xi^{\eta,\d}\big|^2-(1-\d)^m \left(\frac{d}{2}K_\Sigma K_\varpi\left(\big|\tilde\xi^{\eta,\d}\big|^{1+\frac{2}{m}}\vee 1\right)+K_a\right)\label{eq:ineq.pre.visco.supersol3}.
\end{align}
We obtained the first inequality using that $(V^0_w)^{1-m}$ is non-negative by Assumption~\ref{assum:frictionless}, that $\Phi$ is non-negative by construction and  homogeneous of degree $m$ by Assumption~\ref{assum:phi}, we then used the first corrector equation \eqref{1st-corrector-eq} satisfied by $\varpi$ to obtain the second equality and the definitions of the constants \eqref{eq:def.Ksigma} - \eqref{eq:def.gamma.v} to obtain the last inequality. By the convexity of $\Phi$, and estimate \eqref{eq:estimate.iota}, we have (we drop the argument of the functions for clarity in the next two sets of computations, they are taken at the point $(\tilde t^{\eta,\d}, \tilde w^{\eta,\d},\tilde s^{\eta,\d},\tilde\xi^{\eta,\d})$)
\begin{align}
&|I_2^{\eta,\d}|{\leq} \iota^{m-1} \left(\left|\Phi_x\left(s,\varpi_\xi H^{\eta,\d}\right)\right|+\left|\Phi_x\left(s,\varpi_\xi H^{\eta,\d}+\varpi H^{\eta,\d}_\xi\right)\right|\right)\left|\varpi H^{\eta,\d}_\xi\right|\notag\\
&{\leq} \iota^{m-1}(1-\d)^{m} K_{\Phi_x}\Bigg(1\vee \left|\varpi_\xi \right|^{m-1}+1\vee\left|\varpi_\xi h^\eta\left(\frac{\tilde\xi^{\eta,\d}}{\xi^{*,\d}}\right)+\frac{\varpi}{|\xi^{*,\d}|} h^\eta_\xi\left(\frac{\tilde\xi^{\eta,\d}}{\xi^{*,\d}}\right)\right|^{m-1}\Bigg)\eta \frac{\varpi}{|\tilde\xi^{\eta,\d}|}\notag\\
&{\leq} \iota^{m-1}(1-\d)^{m} 2C_mK_{\Phi_x}\left(1\vee \left|\varpi_\xi \right|^{m-1}+1\vee\left|\frac{\varpi}{|\xi^{*,\d}|} h_\xi\left(\frac{\tilde\xi^{\eta,\d}}{\xi^{*,\d}}\right)\right|^{m-1}\right)\eta \frac{\varpi}{|\tilde\xi^{\eta,\d}|},\label{eq:ineq.pre.visco.supersol4}
\end{align}
where the second inequality is obtained by \eqref{eq:property.h.eta}, the definition \eqref{eq:def.Phia} of $K_{\Phi_x}$ and the $m-1$-homogeneity of $\Phi_x$. For the third inequality, we set $C_m\geqslant 1$ a constant so that \linebreak
$(a+b)^{m-1}{\leq} C_m (a^{m-1}+b^{m-1})$ for all $a,b>0$, and use the estimate $0{\leq} H^{\eta,\d}{\leq} 1$. 

Assume now that $ |\tilde\xi^{\eta,\d}|^{2/m} \geqslant 1$ and recall that $K_{\varpi}\geqslant 1$, then
\begin{align}
|I_2^{\eta,\d}|&{\leq} 2\iota^{m-1}(1-\d)^{m}C_mK_{\Phi_x}\left(1\vee \left|\varpi_\xi \right|^{m-1}+1\vee\left|\eta\frac{\varpi}{|\tilde\xi^{\eta,\d}|} \right|^{m-1}\right)\eta \frac{\varpi}{|\tilde\xi^{\eta,\d}|}\notag\\
&{\leq} 2\iota^{m-1}(1-\d)^{m} C_m K_{\Phi_x} \left(1\vee \left|K_\varpi |\tilde\xi^{\eta,\d}|^{\frac{2}{m}} \right|^{m-1}+1\vee\left|K_\varpi |\tilde\xi^{\eta,\d}|^{\frac{2}{m}} \right|^{m-1}\right)\eta K_\varpi |\tilde\xi^{\eta,\d}|^{\frac{2}{m}}\notag\\
&{\leq} 4\iota^{m-1}(1-\d)^{m} C_m K_{\Phi_x} K_\varpi ^{m}\eta  |\tilde\xi^{\eta,\d}|^{2},\label{eq:ineq.pre.visco.supersol5}
\end{align}
where the first inequality follows from the property of $h^\eta$ (see \eqref{eq:property.h.eta}), the second from the definition \eqref{eq:def.Kvarpi} of $K_{\varpi}$ and the third from the assumption on $\tilde\xi^{\eta,\d}$.
Hence if $ |\tilde\xi^{\eta,\d}| \geqslant 1$, joining together \eqref{eq:ineq.pre.visco.supersol1}, \eqref{eq:ineq.pre.visco.supersol2}, \eqref{eq:ineq.pre.visco.supersol3}, and \eqref{eq:ineq.pre.visco.supersol5}, we get
\begin{align}
&\left( (1-(1-\d)^m)\gamma_v 
-4\iota^{m-1}(1-\d)^{m} C_m K_{\Phi_x} K_\varpi ^{m}\eta\right)  |\tilde\xi^{\eta,\d}|^{2}\label{eq:ineq.constants.supersol}\\
 &\qquad{\leq} (1-\d)^m \left(d\frac{K_\Sigma K_\varpi\left|\tilde\xi^{\eta,\d}\right|^{1+\frac{2}{m}}}{2}+K_a\right)+  K_0+\frac{d}{2}K_{\Sigma}\left(K'_{\varpi}C^*+2\eta K'_{\varpi} +K_{\varpi}\left|\tilde\xi^{\eta,\d}\right|^{1+\frac{2}{m}}\right).\notag
 \end{align}
Let $\eta_{\delta}=\frac{(1-(1-\d)^m)\gamma_v }{8\iota^{m-1}(1-\d)^{m} C_m K_{\Phi_x} K_\varpi ^{m}}$, for $\eta\in (0,\eta_\d\wedge 1)$ we have 
\begin{equation*}
\frac{(1-(1-\d)^m)\gamma_v \left|\xi^{\eta,\d}\right|^2}{2}{\leq} \left( (1-(1-\d)^m)\gamma_v 
-4\iota^{m-1}(1-\d)^{m} C_m K_{\Phi_x} K_\varpi ^{m}\eta\right)  |\tilde\xi^{\eta,\d}|^{2}.
\end{equation*}
Thus, under the assumption $ |\tilde\xi^{\eta,\d}| \geqslant 1$ and for $\eta\in (0,\eta_\d\wedge 1)$, \eqref{eq:ineq.constants.supersol} leads to 
\begin{equation*}
\left|\tilde \xi^{\eta,\d}\right|^2{\leq} \frac{2\left|\tilde\xi^{\eta,\d}\right|^{1+\frac{2}{m}}dK_\Sigma K_\varpi+2\left(K_a+  K_0\right)+dK_{\Sigma}K'_{\varpi}(C^*+2)}{(1-(1-\d)^m)\gamma_v}.
\end{equation*}
This shows, thanks to the definition \eqref{eq:defdeltastar} of $\xi^{*,\d}$ that for all $\eta\in(0,\eta_\d\vee 1)$, $\left|\tilde\xi^{\eta,\d}\right|$ is bounded by $1\vee \xi^{*,\d}$. 
Hence, up to taking a subsequence, as $\eta\to 0$, for every $\delta \in (0,1)$, there exists $(\tilde  t^{\d},\tilde  w^{\d},\tilde  s^{\d},\tilde  \xi^{\d})\in \bar B_{r_0}(t^0,w^0,s^0)\times\R^d$ such that we have the following convergence as $\eta \to 0$,
\begin{equation*}
\tilde t^{\eta,\d}\to\tilde  t^{\d},~\tilde w^{\eta,\d}\to \tilde w^{\d},~
\tilde s^{\eta,\d}\to\tilde s^{\d},~\tilde \xi^{\eta,\d}\to\tilde  \xi^{\d}.
\end{equation*}
Since $h^\eta$ converges uniformly on compacts to $1$ and by continuity of the functions involved, we can now take the limit to $0$ in $\eta $ in \eqref{eq:supeta} to obtain 
\begin{align}\label{eq:finalsuper}
\notag&-E_2(\tilde t^{\d},\tilde w^{\d},\tilde s^{\d},\varphi^0_t, \varphi^0_w, \varphi^0_s,\varphi^0_{ww},\varphi^0_{ws},\varphi^0_{ss}) \geqslant E_1(\tilde t^{\d}, \tilde w^{\d},\tilde s^{\d},\tilde \xi^{\d},(1-\d)\varpi_\xi,(1-\d)\varpi_{\xi\xi})\\ 
\notag&\qquad=-\frac{V^0_{ww}(\tilde t^{\d}, \tilde w^{\d},\tilde s^{\d})}{2} \bigg|\sum_{j=1}^{d} \tilde\xi^\d_j \tilde s^{\d}_j \sigma^j(\tilde s^{\d})\bigg|^2+\frac{1-\d}{2}\text{Tr}\left(c^{h^0}(\tilde t^{\d},\tilde w^{\d},\tilde s^{\d})  \varpi_{\xi\xi}(\tilde t^{\d},\tilde w^{\d},\tilde s^{\d},\tilde \xi^{\d})\right)\\
\notag&\qquad\quad-(V^0_w(\tilde t^{\d},\tilde w^{\d},\tilde s^{\d}))^{1-m}(1-\d)^m\Phi\left(\tilde s^\d, \varpi_\xi(\tilde t^{\d},\tilde w^{\d},\tilde s^{\d},\tilde\xi^{\d})\right)\\
\notag&\qquad=(1-\d)^m a(\tilde t^{\d},\tilde w^{\d},\tilde s^{\d})+((1-\d)^m-1)\frac{V^0_{ww}(\tilde t^{\d},\tilde w^{\d},\tilde s^{\d})}{2} \bigg|\sum_{j=1}^{d} \tilde \xi_j^\d \tilde s^{\d}_j \sigma^j(\tilde s^{\d})\bigg|^2\\
\notag&\qquad\quad+((1-\d)-(1-\d)^m)\frac{1}{2}\text{Tr}\left(c^{h^0}(\tilde t^{\d},\tilde w^{\d},\tilde s^{\d})  \varpi_{\xi\xi}(\tilde t^{\d},\tilde w^{\d},\tilde s^{\d},\tilde\xi^{\d})\right)\\
&\qquad\geqslant(1-\d)^m a(\tilde t^{\d}, \tilde w^{\d},\tilde s^{\d}),
\end{align}
where we used the first corrector equation \eqref{1st-corrector-eq}, the signs of $V^0_{ww}(\tilde t^{\d},\tilde w^{\d},\tilde s^{\d})$, $((1-\d)-(1-\d)^m)$, $((1-\d)^m-1)$, the convexity of $\varpi$, the positive definiteness of $c^{h^0}$ and the fact that the trace of the product of symmetric positive semidefinite matrices is non-negative. Due to the compactness of $\bar B_{r_0}(t^0,w^0,s^0)$, up to a subsequence, we can take the limit of the family $(\tilde t^{\d},\tilde w^{\d},\tilde s^{\d})_{\d\in(0,1)}$ as $\d\to 0$ and obtain
\begin{align*}
\tilde t^{\d}\to \tilde t,\qquad\tilde w^{\d}\to \tilde w,\qquad\tilde s^{\d}\to \tilde s.
\end{align*}
Using \eqref{eq:supsolineq}, one can show by a classical argument of the theory of viscosity solution (see for e.g. \cite{CIL}) that
\begin{equation*}
(\tilde t,\tilde w,\tilde s)=(t^0,w^0,s^0).
\end{equation*}
Additionally, by continuity of $a$, $E_2$, and $(t,w,s)\mapsto c_0|(t,w,s)-(t^0,w^0,s^0)|$ we have the following limits as $\d\to 0$
\begin{align*}
-E_2(\tilde t^{\d},\tilde w^{\d},\tilde s^{\d},\varphi^0_t, \varphi^0_w, \varphi^0_s,\varphi^0_{ww},\varphi^0_{ws},\varphi^0_{ss})&\to -E_2(t^0,w^0,s^0,\phi_t, \phi_w, \phi_s,\phi_{ww},\phi_{ws},\phi_{ss}),\\
(1-\d)^m a(\tilde t^{\d},\tilde w^{\d},\tilde s^{\d})&\to a (t^0,w^0,s^0),
\end{align*}
which gives the supersolution property for $u_*$ via \eqref{eq:finalsuper}.
\end{proof}

\subsection{The Subsolution Property}\label{ss.subsol}
\begin{prop}\label{prop:subsol}
	Under the assumption of Theorem~\ref{thm:expansion}, the function $u^*$ is an upper semicontinuous viscosity subsolution of the second corrector equation \eqref{2nd-corrector-eq}.
\end{prop}
\begin{proof}
The proof is based on \cite[Proof of Proposition 6.3]{MMKS}. 
Let $(t^0,w^0,s^0)\in\cD$ and $\phi\in \cC^{1,2,2}(\cD,\R)$ such that $(t^0,w^0,s^0)$ is a strict maximizer of $u^*-\phi$ on $\cD$. Then, for all $(t,w,s)\in\cD\backslash\{(t^0,w^0,s^0)\}$ it holds
\begin{align}
	\label{eq:subsolineq}
	0=u^*(t^0,w^0,s^0)-\phi(t^0, w^0, s^0) > u^*(t,w,s)-\phi(t,w,s).
\end{align}
By definition of $u^*$ (see \eqref{eq:ue*u}) there exists a family $(t^{{\e}},w^{{\e}},s^{{\e}},h^{{\e}})_{{\e}>0}$ such that 
\begin{align}
	&(t^{{\e}},w^{{\e}},s^{{\e}},h^{{\e}})\to (t^0,w^0,s^0,h^0(t^0,w^0,s^0)),\quad u^{\e,*}(t^{{\e}},w^{{\e}},s^{{\e}},h^{{\e}})\to u^*(t^0,w^0,s^0)\notag\\
	&\mbox{ and }p^\e:=u^{\e,*}(t^{{\e}},w^{{\e}},s^{{\e}},h^{{\e}})-\phi(t^{{\e}},w^{{\e}},s^{{\e}})\to 0\label{eq:def.pe.subsol}.
\end{align}
By Assumption~\ref{assum:bound} and the regularity of $h^0$, there exist $\e_0,r_0>0$, $\a\in(0,r_0)$, $\e_0<1$ such that $\bar B_{r_0}(t^0,w^0,s^0)\subseteq \cD$ , and such that for all $\e\in(0,\e_0)$ we have 
\begin{align*}
	b^*:=\sup\left\{u^{\e*}(t,w,s,h):\left|(t,w,s,h)-(t^0,w^0,s^0,h^0(t_0,w_0,s_0))\right|{\leq} r_0,~ \e\in(0,\e_0)\right\}<\infty,\\
	\mbox{and }|h^0(t,w,s)-h^0(t^0,w^0,s^0)|\leq \frac{r_0}{4}\mbox{ if }|(t,w,s)-(t^0,w^0,s^0)|\leq \a.
\end{align*}
For $(t,w,s)\in (\bar B_\a(t^0,w^0,s^0)\backslash B_{\a/2}(t^0,w^0,s^0))$ and $(t',w',s')\in\bar B_{\a/4}(t^0,w^0,s^0)$ we have
$$\left|(t,w,s)-(t',w',s')\right|^4\geqslant \Big(\frac{\alpha}{4}\Big)^4.$$ Denote $M:=2+b^*+\sup\{-\phi(t,w,s):(t,w,s)\in B_\a(t^0,w^0,s^0)\}<\infty$. Thanks to Assumption~\ref{assum:2ndcorrect} we can define $\d_0:=\inf_{\{(t,w,s)\in\bar B_{r_0},|\xi|\geq \frac{r_0}{4}\}}\frac{\varpi(t,w,s,\xi)}{|\xi|^{1+2/m}}>0$ and
$$c_0:=\frac{M}{(\frac{\alpha}{4})^4\wedge \delta^2_0 (\frac{r_0}{4})^{1+\frac{2}{m}}}.$$

The growth of $\varpi$ in its last variable at infinity assumed in Assumption~\ref{assum:2ndcorrect} provides the inequality $$\varpi^2\left(t^\e,w^\e,s^\e, \frac{h^\e-h^0(t^\e,w^\e,s^\e)}{\es}\right)\leq C(t^\e,w^\e,s^e)\frac{|h^\e-h^0(t^\e,w^\e,s^\e)|^{2+4/m}}{\e^{m^*(2+4/m)}}.$$ Then, the continuity of $h^0$ (Assumption~\ref{assum:frictionless}), and the convergence of $(t^\e,w^\e,s^\e,h^\e)$ to the point $(t^0,w^0,s^0,h^0)$ allows us to choose $\e_0>0$ smaller to also have for $0<\e\leq \e_0$,
\begin{align}
	\label{eq:varpi2.c0}
	\e^{2m^*(1+\frac{2}{m})}\varpi^2\left(t,^\e,w^\e,s^\e, \frac{h^\e-h^0(t^\e,w^\e,s^\e)}{\es}\right)\leq \frac{1}{3c_0}.
\end{align}
Since $m>2$, we can also take $\e_0$ small enough so that 
\begin{align}
	\label{eq:varpi.c0}
	\e^{2m^*}\varpi\left(t^\e,w^\e,s^\e,\frac{h^\e-h^0(t^\e,w^\e,s^\e)}{\es}\right)\leq \frac{1}{3},\, \\
	\left|(t^\e,w^\e,s^\e)-(t^0,w^0,s^0)\right|^4\leq \frac{\a}{4},\ \mbox{ and }\  |p^\e|\leq 1,\mbox{ for }\e\in(0,\e_0).\notag
\end{align}
We now define 
\begin{align*}
	\tilde \varpi^\e(t,w,s,h)&:=c_0\e^{2m^*(1+\frac{2}{m})}\varpi^2\left(t,w,s, \frac{h-h^0(t,w,s)}{\es}\right),\\
	{\color{black}\phi^\e(t,w,s,h)}&{\color{black}:=c_0\Big((t-t^\e)^4 + (w-w^\e)^4+(s-s^\e)^4\Big)+\tilde \varpi^\e(t,w,s,h).}
\end{align*}
By~\eqref{eq:varpi2.c0} we have $|\phi^\e(t^\e,w^\e,s^\e,h^\e)|\leq 1/3$. The definitions of $\d_0$, $c_0$ and $M$ were set so that $\phi^\e\geq M$ if $|h-h^0(t,w,s)|\geq \frac{r_0}{4}$ or if $(t,w,s)\in \bar B_\a(t^0,w^0,s^0)\backslash B_{\a/2}(t^0,w^0,s^0)$, for $\e\in(0,\e_0)$. Now define for $\eta\in(0,1)$
\begin{align*}
	\bar \phi^\e:= &p^\e+\phi,~~\varphi^\e:=\bar \phi^\e +\phi^\e,\\
	\psi^{\e,\eta}(t,w,s,h):=&V^0(t,w,s)-\ets(\bar\phi^\e+\phi^\e)(t,w,s,h)\\
	&-\e^{4m^*}(1+\eta)\varpi\left(t,w,s,\frac{h-h^0(t,w,s)}{\es}\right).
\end{align*}

\emph{Claim: $V^\e_*-\psi^{\e,\eta}$ is a lower semicontinuous function which, for $0<\e\leqslant \e_0$, attains its minimum on $\bar B_\a(t^0,w^0,s^0)\times \bar B_{r_0}(h^0(t^0,w^0,s^0))$ at an interior point $(\tilde t^{\e,\eta}, \tilde w^{\e,\eta},\tilde s^{\e,\eta},\tilde h^{\e,\eta})$ such that $|\tilde h^{\e,\eta}-h^0(\tilde t^{\e,\eta},\tilde w^{\e,\eta},\tilde s^{\e,\eta})|+|\tilde h^{\e,\eta}-h^0(t^0,w^0,s^0)|\leq r_1$ for some $r_1>0$ independent of $\e, \eta$.} Indeed by~\eqref{eq:varpi2.c0}, \eqref{eq:varpi.c0} and the inequality $0<\eta<1$, it holds that $\ems(V^\e_*-\psi^{\e,\eta})(t^\e,w^\e,s^\e,h^\e)<1$, while if $(t,w,s)\in \bar B_\a(t^0,w^0,s^0)\backslash B_{\a/2}(t^0,w^0,s^0)$ or if $(t,w,s)\in \bar B_{\a/2}(t^0,w^0,s^0)$ and $|h-h^0(t,w,s)|\geq \frac{r_0}{4}$, by definition of $b^*$, the bound on $p^\e$, the definition of $M$, $c_0$, the fact that $(t^\e,w^\e,s^\e)\in\bar B_{\a/4}(t^0,w^0,s^0)$ and the non-negativity of $\varpi$, the inequality $\ems(V^\e_*-\psi^{\e,\eta})(t^\e,w^\e,s^\e,h^\e)\geqslant 1$ holds. Furthermore, by the triangular inequality, we can choose $r_1=5r_0/4$.  

Denote $\tilde \xi^{\e,\eta}=\ems(\tilde h^{\e,\eta}-h^0(\tilde t^{\e,\eta}, \tilde w^{\e,\eta},\tilde s^{\e,\eta}))$. Now using the viscosity property of $V_*^\e$ at $(\tilde t^{\e,\eta}, \tilde w^{\e,\eta},\tilde s^{\e,\eta},\tilde h^{\e,\eta})$ for the test function $\psi^{\e,\eta}$, we obtain
\begin{equation*}
	\cG^\e(\psi^{\e,\eta})(\tilde t^{\e,\eta}, \tilde w^{\e,\eta},\tilde s^{\e,\eta},\tilde h^{\e,\eta})\geqslant 0.
\end{equation*}
Using equation~\eqref{eq:remainder.viscosity} from Proposition~\ref{prop:remainder.estimate} applied to $\psi^{\e,\eta}$ with $\nu=\varphi^\e$ and $\chi=(1+\eta)\varpi$, we get
\begin{align}
	0&{\leq} \cG^\e(\psi^{\e,\eta})(\tilde t^{\e,\eta}, \tilde w^{\e,\eta},\tilde s^{\e,\eta},\tilde h^{\e,\eta})
	=\e^{2m^*}E_2(\tilde t^{\e,\eta}, \tilde w^{\e,\eta},\tilde s^{\e,\eta}, \varphi^\e_t,  \varphi^\e_w, \varphi^\e_s, \varphi^\e_{ww}, \varphi^\e_{ws}, \varphi^\e_{ss})\notag\\
	&+\e^{2m^*}E_1(\tilde t^{\e,\eta}, \tilde w^{\e,\eta},\tilde s^{\e,\eta},\tilde \xi^{\e,\eta},(1+\eta)\varpi_\xi(\tilde t^{\e,\eta}, \tilde w^{\e,\eta},\tilde s^{\e,\eta},\tilde \xi^{\e,\eta}),(1+\eta)\varpi_{\xi\xi}(\tilde t^{\e,\eta}, \tilde w^{\e,\eta},\tilde s^{\e,\eta},\tilde \xi^{\e,\eta}))\notag\\
	&+R^\e(\tilde t^{\e,\eta}, \tilde w^{\e,\eta},\tilde s^{\e,\eta},\tilde h^{\e,\eta},\psi^{\e,\eta})\label{eq:expansesol}\\
	&+\ets\left((\pa_w \psi^\e)^{1-m}(\Phi(s,(1+\eta)\varpi_\xi)-\Phi(s,(1+\eta)\varpi_{\xi}+\ems \tilde \varpi^\e_h))\right)(\tilde t^{\e,\eta}, \tilde w^{\e,\eta},\tilde s^{\e,\eta},\tilde h^{\e,\eta})\notag
\end{align}
where we have slightly abused notation for $\varpi_\xi$ since this function does not depend on $h$ but on $\xi$. Note that this last term is in fact non positive. Indeed, 
$$(1+\eta)\varpi_{\xi}+\ems \tilde \varpi^\e_h=(1+\eta)\varpi_{\xi}\bigg(1+\frac{2c_0 \e^{\frac{4m^*}{m}}}{1+\eta}\varpi\bigg)$$ and $\varpi\geq 0$. 
Thus, $(1+\frac{2c_0 \e^{{4m^*/m}}}{1+\eta}\varpi)\geq 1$ and by the $m$-homogeneity of $\Phi$
\begin{align}
	&\ets\Big((\pa_w \psi^\e)^{1-m}(\Phi(s,(1+\eta)\varpi_\xi)-\Phi(s,(1+\eta)\varpi_{\xi}+\ems \tilde \varpi^\e_h))\Big)\notag\\
	&\qquad= \ets(\pa_w \psi^\e)^{1-m}\Phi(s,(1+\eta)\varpi_\xi)\bigg(1-\bigg(1+\frac{2c_0 \e^{\frac{4m^*}{m}}}{1+\eta}2c_0\varpi\bigg)^m\bigg)\leq 0.\label{eq:phihomo}
\end{align}
\emph{Claim: Up to reducing $\e_0$, $\tilde P^{\e,\eta}:=(\tilde t^{\e,\eta}, \tilde w^{\e,\eta},\tilde s^{\e,\eta},\tilde h^{\e,\eta})$ is in the set defined in \eqref{negpower} for $0<\e\leqslant \e_0$.} We need to bound $|(V^0_w-\pa_w \psi^\e)(\tilde P^{\e,\eta})|$. We have
\begin{align*}
	\big(V^0_w-\pa_w \psi^{\e,\eta}\big)(\tilde P^{\e,\eta}) =& \ets\Big(\phi_w(\tilde P^{\e,\eta})+4c_0|\tilde w^{\e,\eta}-w^\e|^3+2c_0\e^{2m^*(1+\frac{2}{m})}(\varpi\pa_w\varpi)(\tilde P^{\e,\eta})\Big)\\
	&+\efs(1+\eta)\pa_w\varpi(\tilde P^{\e,\eta}).
\end{align*}
Since $\varpi\in\cC_m$ by Assumption~\ref{assum:2ndcorrect}, we have on $\bar B_\a(t^0,w^0,s^0)\times \bar B_{r_0}(h^0(t^0,w^0,s^0))$ with $C_0=\sup\{C(t,w,s)~|~(t,w,s)\in\bar B_\a(t^0,w^0,s^0)\}$
\begin{align*}
	\e^{m^*(1+\frac{2}{m})}\varpi\bigg(t,w,s,\frac{h-h^0(t,w,s)}{\es}\bigg) &\leqslant C_0|h-h^0(t,w,s)|^{1+\frac{2}{m}},\\
	\e^{m^*(1+\frac{2}{m})}\bigg(\varpi_w-\ems\big(h^0_w(t,w,s)\big)^\top\varpi_{\xi}\bigg)\bigg(t,w,s,\frac{h-h^0(t,w,s)}{\es}\bigg) &\leqslant C_0|h-h^0(t,w,s)|^{1+\frac{2}{m}}.
\end{align*}
Since $|\tilde h^{\e,\eta}-h^0(\tilde t^{\e,\eta}, \tilde w^{\e,\eta},\tilde s^{\e,\eta})|$ is bounded, we get that $|\emts(V^0_w-\pa_w \psi^{\e,\eta})(\tilde P^{\e,\eta})|$ is too. Then taking $\e_0$ smaller if necessary, and using that $V^0_w$ is bounded away from $0$ on $B_{r_0}(t^0,w^0,s^0)$, the claim obtains. 

Similarly, the scaling of $\varpi$ and its derivatives, yields that $\tilde \varpi^\e$ and its derivatives in fact admit uniform bounds in $\e$. Then, we can apply item (ii) of Proposition~\ref{prop:remainder.estimate} and we have on the bounded set $\{(\tilde t^{\e,\eta}, \tilde w^{\e,\eta},\tilde s^{\e,\eta},\tilde h^{\e,\eta})~|~\e\in(0,\e_0),\eta\in(0,1)\}$
\begin{equation*}
	\frac{| R^\e(\tilde t^{\e,\eta}, \tilde w^{\e,\eta},\tilde s^{\e,\eta},\tilde h^{\e,\eta},\psi^{\e,\eta})|}{\e^{2m^*}}{\leq} C(1+|\tilde \xi^{\e,\eta}|^2)^{\frac{1}{2}+\frac{1}{m}},
\end{equation*}
for $C$ that does not depend on $\e$ nor $\eta$.  Then, thanks to the continuity of $h^0$, $E_2$, the regularity of $\varphi^\e$, and the boundedness of $(\tilde t^{\e,\eta}, \tilde w^{\e,\eta},\tilde s^{\e,\eta},\tilde h^{\e,\eta})_{\{\e\in(0,\e_0],~\eta\in(0,1)\}}$ we obtain the inequality
\begin{align*}
	E_2(\tilde t^{\e,\eta}, \tilde w^{\e,\eta},\tilde s^{\e,\eta}, \varphi^\e_t,  \varphi^\e_w, \varphi^\e_s, \varphi^\e_{ww}, \varphi^\e_{ws}, \varphi^\e_{ss})+&\frac{| R^\e(\tilde t^{\e,\eta}, \tilde w^{\e,\eta},\tilde s^{\e,\eta},\tilde h^{\e,\eta},\psi^{\e,\eta})|}{\ets}\\
	&\qquad\qquad\qquad{\leq} C(1+|\tilde \xi^{\e,\eta}|^2)^{\frac{1}{2}+\frac{1}{m}},
\end{align*}
for some $C>0$ independent of $\e$ and $\eta$. 
This inequality implies, thanks to \eqref{eq:expansesol} and \eqref{eq:phihomo}, the following estimate
\begin{align}\label{eq:Ve.supersolution2}
	\notag&-E_1(\tilde t^{\e,\eta}, \tilde w^{\e,\eta},\tilde s^{\e,\eta},\tilde \xi^{\e,\eta},(1+\eta)\varpi_\xi(\tilde t^{\e,\eta}, \tilde w^{\e,\eta},\tilde s^{\e,\eta},\tilde \xi^{\e,\eta}),(1+\eta)\varpi_{\xi\xi}(\tilde t^{\e,\eta}, \tilde w^{\e,\eta},\tilde s^{\e,\eta},\tilde \xi^{\e,\eta}))\\
	&\leq C(1+|\tilde \xi^{\e,\eta}|^2)^{\frac{1}{2}+\frac{1}{m}}.
\end{align}
	
Additionally, using the definition of $E_1$ from \eqref{correc1}, we have
\begin{align*}
	&-E_1(\tilde t^{\e,\eta}, \tilde w^{\e,\eta},\tilde s^{\e,\eta},\tilde \xi^{\e,\eta},(1+\eta)\varpi_\xi(\tilde t^{\e,\eta}, \tilde w^{\e,\eta},\tilde s^{\e,\eta},\tilde \xi^{\e,\eta}),(1+\eta)\varpi_{\xi\xi}(\tilde t^{\e,\eta}, \tilde w^{\e,\eta},\tilde s^{\e,\eta},\tilde \xi^{\e,\eta}))\\
	&=\frac{V^0_{ww} (\tilde t^{\e,\eta}, \tilde w^{\e,\eta},\tilde s^{\e,\eta})}{2} \bigg|\sum_{j=1}^{d} \tilde \xi^{\e,\eta}_j \tilde s^{\e,\eta} \sigma^j(\tilde s^{\e,\eta})\bigg|^2\\
	&\qquad+(V^0_w(\tilde t^{\e,\eta}, \tilde w^{\e,\eta},\tilde s^{\e,\eta}))^{1-m}\Phi\left(\tilde s^{\e,\eta},\left(1+\eta)\varpi_\xi\right(\tilde t^{\e,\eta}, \tilde w^{\e,\eta},\tilde s^{\e,\eta},\tilde \xi^{\e,\eta})\right)\\
	&\qquad-\frac{1+\eta}{2}\text{Tr}\left(c^{h^0}(\tilde t^{\e,\eta}, \tilde w^{\e,\eta},\tilde s^{\e,\eta}) \varpi_{\xi\xi}(\tilde t^{\e,\eta}, \tilde w^{\e,\eta},\tilde s^{\e,\eta},\tilde\xi^{\e,\eta})\right).
\end{align*}
We now use that $\varpi$ solves the first corrector equation \eqref{1st-corrector-eq} to obtain the equation (we drop the argument of the functions for clarity: $(\tilde t^{\e,\eta}, \tilde w^{\e,\eta},\tilde s^{\e,\eta})$ for $V^0$, its derivatives and $a$, $(\tilde t^{\e,\eta}, \tilde w^{\e,\eta},\tilde s^{\e,\eta},\tilde \xi^{\e,\eta})$ for $\varpi$ and its derivatives)
\begin{align}
	&-E_1(\tilde t^{\e,\eta}, \tilde w^{\e,\eta},\tilde s^{\e,\eta},\tilde \xi^{\e,\eta},(1+\eta)\varpi_\xi,(1+\eta)\varpi_{\xi\xi})= - \eta\frac{V^0_{ww}}{2} \bigg|\sum_{j=1}^{d} \tilde \xi^{\e,\eta}_j \tilde s^{\e,\eta} \sigma^j(\tilde s^{\e,\eta})\bigg|^2\notag\\
	&\qquad\qquad-(1+\eta)a+(V^0_w)^{1-m}\left(\Phi\left(\tilde s^{\e,\eta},(1+\eta)\varpi_\xi\right)-(1+\eta)\Phi\left(\tilde s^{\e,\eta},\varpi_\xi\right)\right).\label{eq:subsol.estimate.xi1}
\end{align}
	
We note that thanks to the boundedness of $(\tilde t^{\e,\eta}, \tilde w^{\e,\eta},\tilde s^{\e,\eta})_{\{\e\in(0,\e_0],~\eta\in(0,1)\}}$, and the continuity of $a$ the term $(1+\eta)a(\tilde t^{\e,\eta}, \tilde w^{\e,\eta},\tilde s^{\e,\eta})$ is bounded uniformly in $\e>0$ and $\eta\in(0,1)$.
Additionally, we have by $m$-homogeneity and non-negativity of $\Phi$ for $\eta\in(0,1)$
$$		\Phi\left(\tilde s^{\e,\eta},(1+\eta)\varpi_\xi\right)-(1+\eta)\Phi\left(\tilde s^{\e,\eta},\varpi_\xi\right)=((1+\eta)^m-(1+\eta))\Phi\left(\tilde s^{\e,\eta},\varpi_\xi\right)\geqslant 0.$$
Putting together this inequality with  \eqref{eq:Ve.supersolution2} and \eqref{eq:subsol.estimate.xi1}, finally yields
\begin{equation*}
	-\eta \frac{V^0_{ww} (\tilde t^{\e,\eta}, \tilde w^{\e,\eta},\tilde s^{\e,\eta})}{2} \bigg|\sum_{j=1}^{d} \tilde \xi^{\e,\eta}_j \tilde s^{\e,\eta} \sigma^j(\tilde s^{\e,\eta})\bigg|^2{\leq} C (1+|\tilde \xi^{\e,\eta}|^2)^{\frac{1}{2}+\frac{1}{m}},
\end{equation*}
for $C>0$ independent of $\e>0$ and $\eta\in(0,1)$. Thanks to the non-degeneracy of $\sigma\sigma^\top$, and the fact that $(\tilde t^{\e,\eta}, \tilde w^{\e,\eta},\tilde s^{\e,\eta})_{\{\e\in(0,\e_0],~\eta\in(0,1)\}}\subseteq B_{r_0}(t^0,w^0,s^0)\subseteq\cD$, we can find $c>0$ such that $$	-\eta \frac{V^0_{ww} (\tilde t^{\e,\eta}, \tilde w^{\e,\eta},\tilde s^{\e,\eta})}{2} \bigg|\sum_{j=1}^{d} \tilde \xi^{\e,\eta}_j \tilde s^{\e,\eta} \sigma^j(\tilde s^{\e,\eta})\bigg|^2\geqslant \eta c|\tilde \xi^{\e,\eta}|^{2}.$$ 
Thus, since $m>2$, we deduce that for all $\eta\in(0,1)$, the family $\{\tilde \xi^{\e,\eta}:\e\in(0,\e_0]\}$ is bounded. 
	
Now, for every $\eta\in(0,1)$, we extract a subsequence of $\{(\tilde t^{\e,\eta},\tilde w^{\e,\eta},\tilde s^{\e,\eta},\tilde h^{\e,\eta},\tilde \xi^{\e,\eta})~|~\e\in(0,\e_0)\}$ converging to $(\tilde t^{\eta}, \tilde w^{\eta},\tilde s^{\eta},\tilde h^{\eta},\tilde \xi^{\eta})$. 
Additionally, the boundedness of $\{\tilde \xi^{\e,\eta}:\e\in(0,\e_0]\}$ allows us to use the last point of Proposition~\ref{prop:remainder.estimate} to pass to the limit in \eqref{eq:expansesol} and obtain that for all $\eta\in (0,1)$, (note that $\varphi^\e$ converges uniformly on compacts to $\phi$ and that $(\tilde t^{\eta},\tilde w^{\eta},\tilde s^{\eta},\tilde h^{\eta})$ as limit of $(\tilde t^{\e,\eta},\tilde w^{\e,\eta},\tilde s^{\e,\eta},\tilde h^{\e,\eta})$ along a subsequence is in the set of~\eqref{negpower})
\begin{align*}
	0&{\leq} E_2(\tilde t^{\eta}, \tilde w^{\eta},\tilde s^{\eta}, \phi_t,  \phi_w, \phi_s, \phi_{ww}, \phi_{ws}, \phi_{ss})\notag\\
	&+E_1(\tilde t^{\eta}, \tilde w^{\eta},\tilde s^{\eta},\tilde \xi^{\eta},(1+\eta)\varpi_\xi(\tilde t^{\eta}, \tilde w^{\eta},\tilde s^{\eta},\tilde \xi^{\eta}),(1+\eta)\varpi_{\xi\xi}(\tilde t^{\eta}, \tilde w^{\eta},\tilde s^{\eta},\tilde \xi^{\eta})).
\end{align*}
We use \eqref{eq:subsol.estimate.xi1} one more time and obtain that 
$$E_2(\tilde t^{\eta}, \tilde w^{\eta},\tilde s^{\eta}, \phi_t,  \phi_w, \phi_s, \phi_{ww}, \phi_{ws}, \phi_{ss})\geq -(1+\eta)a.$$
Note that $\xi^\eta$ is not present in this inequality and $(\tilde t^{\eta}, \tilde w^{\eta},\tilde s^{\eta})$ is bounded for $\eta\in (0,1)$. 
One can now take the limit $\eta\to 0$ to obtain thanks to \eqref{eq:subsolineq} (using classical arguments of the theory of viscosity solutions, see \cite{CIL}) that a subsequence of $(\tilde t^{\eta}, \tilde w^{\eta},\tilde s^{\eta})$ converges to $(t^0,w^0,s^0)$. Then passing to the limit in the equality above we obtain
\begin{equation*}
	-E_2(t^0,w^0,s^0,\phi_t,\phi_w,\phi_s,\phi_{ww},\phi_{ws},\phi_{ss}){\leq} a(t^0,w^0,s^0),
\end{equation*}
which gives the viscosity subsolution property.
\end{proof}

\subsection{The Terminal Condition}
\begin{prop}\label{prop:term}
Under the assumption of Theorem~\ref{thm:expansion}, $u^*$ satisfies
\begin{align*}
\limsup_{(t,w',s')\to (T,w,s)}u^*(t,w',s')= 0\ \mbox{for all }\ (w,s)\in\R_{++}\times\R_{++}^d.
\end{align*}
Thus, the upper semicontinuous extension of $u^*$ to $\cD_T$ and lower semicontinuous extensions of $u_*$ to $\cD_T$ satisfy
$$u_*(T,w,s)=u^*(T,w,s)=0.$$
\end{prop}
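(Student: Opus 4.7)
The nonnegativity $u^* \geq 0$ is automatic from Assumption~\ref{assum:bound} combined with $V^\e_* \leq V^0$ (any frictional value is dominated by the frictionless one). It therefore suffices to prove
\[\limsup_{(t,w',s')\to(T,w,s)} u^*(t,w',s') \leq 0\]
for every $(w,s)\in\R_{++}\times\R_{++}^d$; the conclusion $u_*(T,w,s)=u^*(T,w,s)=0$ then follows from $0\leq u_*\leq u^*$ and the definitions of the semicontinuous extensions to $\cD_T$.

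Fix $(w,s)$ and set $\bar h := h^0(T,w,s)$. By Assumption~\ref{assum:frictionless}(ii), $\bar h$ lies strictly inside the simplex, so we can pick a ball $\cN$ around $(T,w,s,\bar h)$ on which the admissibility constraints (positive cash, non-negative holdings and consumption) hold with uniform slack. For $(t'',w'',s'',h'')\in\cN$ with $t''<T$ and a small $\delta>0$, the plan is to use the following admissible control on $[t'',T]$, in two stages: on $[t'',T-\tau_\e]$ with $\tau_\e:=\e^{mm^*(1-\delta)}$, run the candidate asymptotically optimal feedback~\eqref{eq:optimal.strat}--\eqref{eq:def.ce}; on the liquidation window $[T-\tau_\e,T]$, trade at the deterministic constant rate $\theta=-H^\e_{T-\tau_\e}/\tau_\e$ so that $H^\e_T=0$, keep $c=c^0(\cdot,W^\e,S)$, and set $c_T=W^\e_T$. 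The exponent is tailored so that the liquidation cost satisfies $\tau_\e^{-1/(m-1)}\e^{1/(m-1)}=\e^{2m^*+\delta/(m-1)}=o(\ets)$.

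The realised value $J^\e$ of this strategy is then analysed by applying It\^o's formula to
\[M_u := V^0(u,W^\e_u,S_u)-\e^{4m^*}\varpi\bigl(u,W^\e_u,S_u,(H^\e_u-h^0(u,W^\e_u,S_u))/\es\bigr)+\int_{t''}^u U(c^0(r,W^\e_r,S_r))\,dr.\]
Combining the frictionless HJB equation~\eqref{eq:V0.simple}, the first corrector equation~\eqref{1st-corrector-eq} and the remainder estimates of Proposition~\ref{prop:remainder.estimate} (applied with $\chi=\varpi$ and a suitable smooth bounded $\nu^\e$), one checks that the drift of $M_u$ is of order $\ets$ per unit time on $[t'',T-\tau_\e]$, while the liquidation window contributes only $o(\ets)$ by the choice of $\tau_\e$. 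Integrating, taking expectations, and using $\varpi(\cdot,\cdot,\cdot,0)=0$ yields
\[V^0(t'',w'',s'')-V^\e_*(t'',w'',s'',h'')\leq V^0(t'',w'',s'')-J^\e\leq C\,\ets(T-t'')+o(\ets)\]
uniformly on $\cN$, whence $u^*(t,w',s')\leq C(T-t)$ by dividing by $\ets$ and taking the inner $\limsup$, and the outer limit $(t,w',s')\to(T,w,s)$ finally produces the desired $\limsup\leq 0$.

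The main obstacle is the admissibility of the candidate control uniformly in $\e$: keeping $W^\e-\sum H^{\e,i}S^i$, $H^{\e,i}$ and $c^\e$ non-negative on $[t'',T)$ requires trajectory-wise control of the renormalised displacement $\xi^\e=(H^\e-h^0)/\es$, analogous to the estimates derived in the Appendix for Proposition~\ref{prop:control-deviation}, where the role of $\varpi$ as a Lyapunov function for $\xi^\e$ is crucial. A secondary subtlety is the It\^o analysis on the liquidation window, where the rate $\theta=-H^\e/\tau_\e$ blows up as $\tau_\e\to 0$; choosing $\delta$ small enough in the definition of $\tau_\e$ ensures that both the deterministic trading cost and the extra drift terms from $\varpi$ integrate to $o(\ets)$.
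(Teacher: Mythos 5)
Your route---exhibiting an explicit admissible control whose performance is within $C\ets(T-t)+o(\ets)$ of the frictionless value---is a legitimate way to lower-bound a value function, but it is not the paper's argument and, more importantly, it cannot be completed under the hypotheses of the proposition. Proposition~\ref{prop:term} is asserted under Assumptions~\ref{assum:frictionless}--\ref{assum:2ndcorrect} alone, i.e.\ for general Markovian dynamics and general impact functions satisfying Assumption~\ref{assum:phi}. Making your two-stage control admissible (non-negative holdings, positive cash and consumption) and controlling the drift of $M_u$ on $[t'',T-\tau_\e]$ requires trajectory-wise control of $\xi^\e$, moment bounds for $W^\e$, and growth/second-derivative information on $\varpi$ that the paper only establishes in the Appendix, and only for the Black--Scholes model with the impact \eqref{eq:def.f.example}; the remark following \eqref{eq:def.ce} states explicitly that such a verification in general requires properties of $\varpi$ beyond the scope of the paper. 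You flag this as ``the main obstacle,'' but it is the entire content of the proof, not a secondary technicality: as written, your argument would at best establish the terminal condition for the example of Section~\ref{s.blackScholes}, not in the generality of Theorem~\ref{thm:expansion}. A second, smaller defect is the displayed inequality $V^0-V^\e_*\leq V^0-J^\e$: since $V^\e_*$ is the lower semicontinuous envelope of $V^\e$, one only has $V^\e_*\leq V^\e$, so $V^\e_*\geq J^\e$ is not a pointwise fact; you must prove the bound uniformly on $\cN$ and then pass to the $\liminf$ defining $V^\e_*$, a step you mention but do not carry out.

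The paper's proof avoids all of this by staying on the PDE side. Assuming $u^*(T,w^0,s^0)\geq 5\delta>0$, it builds the test function $\psi^\e=V^0-\ets\bigl(\delta\tfrac{T-t}{T-t^\e}+\bar\phi^\e\bigr)$ with a quartic/quadratic penalty $\bar\phi^\e$, locates an interior minimizer of $V^\e_*-\psi^\e$, and invokes the viscosity supersolution property of $V^\e_*$ (Assumption~\ref{assum:Ve.viscosity}) together with the remainder estimates of Proposition~\ref{prop:remainder.estimate}. This yields an inequality in which $\tfrac{\delta}{T-t^\e}$ is dominated by $C(1+|\tilde\xi^\e|^2)^{\frac12+\frac1m}-C_\e|\tilde\xi^\e|^m+C|\tilde\xi^\e|^2$, whose right-hand side is bounded above because $m>2$; hence $\tfrac{\delta}{T-t^\e}$ stays bounded, contradicting $t^\e\to T$. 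The term $\delta\tfrac{T-t}{T-t^\e}$ plays exactly the role of your liquidation window, but it enters through the test function rather than through an explicit strategy, so no admissibility or moment estimates are needed. To salvage your approach you would have to either restrict to the main example or add to the standing assumptions the admissibility and performance estimates of the candidate feedback near $T$.
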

\begin{proof}
Due to the inequality 
$0\leq u_*\leq u^*$ it is sufficient to show that $u^*(T,w,s)\leq 0$. 
Assume on the contrary that  $u^*(T,w^0,s^0)\geq 5\delta>0$ for some $(w^0,s^0)\in \R_{++}^{d+1}$. Similarly to the proof of Proposition~\ref{prop:subsol}, 
	by definition of $u_*$ (see \eqref{eq:ue*u}) there exists a sequence $(t^{{\e}},w^{{\e}},s^{{\e}},h^{{\e}})_{{\e}>0}$ such that 
	\begin{align}
	&(t^{{\e}},w^{{\e}},s^{{\e}},h^{{\e}})\to (T,w^0,s^0,h^0(T,w^0,s^0)),\quad u^{\e,*}(t^{{\e}},w^{{\e}},s^{{\e}},h^{{\e}})\to u_*(T,w^0,s^0).\notag
\end{align}
Note that, the functions $u_*$ and $u^*$ are only defined on $\cD$ and then extended by semicontinuity to $\{T\}\times\R_{++}^{d+1}$, and that therefore, we can take $t^\e<T$ for all $\e>0$.
		
Similarly to the proof of Proposition~\ref{prop:subsol}, there exist $\e_0>0$, $r_0\geq\alpha>0$, $c_0>0$ large enough such that for all $\e\in(0,\e_0]$ we have the following estimates 
	\begin{align*}
		&|h^0(t,w,s)-h^0(T,w^0,s^0)|{\leq} \frac{r_0}{4},~\forall (t,w,s)\in \cD~\mbox{ such that }~|(t,w,s)- (T,w^0,s^0)|{\leq} \a,\\
		\notag&|(t^\e,w^\e,s^\e)-(T,w^0,s^0)|{\leq}  \frac{\a}{4},\qquad u^{\e,*}(t^{{\e}},w^{{\e}},s^{{\e}},h^{{\e}})\geq 4\d,\ |h^\e-h^0(t^\e,w^\e,s^\e)|^2\leqslant \frac{\d}{c_0}\\
		&\notag \mbox{ and }~u^{\e,*}(t,w,s,h)-\bar \phi^\e(t,w,s,h)	<0\mbox{ on }B_{\a}\backslash B_{0,\a},
	\end{align*}
	where 
	\begin{align*}
		&\bar \phi^\e(t,w,s,h):=c_0\left(|(t,w,s)-(t^\e,w^\e,s^\e)|^4+ |h-h^0(t,w,s)|^2\right),\\
		&B_{\a}:=(T-\alpha, T)\times B_\a(w^0,s^0)\times B_{r_0}(h^0(T,w^0,s^0)),\\
		&B_{0,\a}:=\Big\{(t,w,s,h)\in B_\a:(t,w,s)\in (T-\frac{\alpha}{2},T)\times B_{\a/2}(w^0,s^0)\\
		&\qquad\qquad\qquad\qquad\qquad\qquad\qquad\qquad\qquad\qquad\qquad\qquad\mbox{ and }h\in B_{r_0/2}(h^0(T,w^0,s^0))\Big\}.
	\end{align*}
	Define the functions $\phi^\e(t,w,s,h):=\d\frac{T-t}{T-t^\e}+\bar \phi^\e(t,w,s,h)$ and $\psi^\e:=V^0-\ets \phi^\e$ . Then, similarly to the proof of \cite[Proposition 6.5]{MMKS} the function $V^\e_*-\psi^\e$ admits a local minimizer $(\tilde t^\e,\tilde w^\e,\tilde s^\e,\tilde h^\e)\in \bar B_\a$ satisfying 
$u^{\e,*}(\tilde t^\e,\tilde w^\e,\tilde s^\e,\tilde h^\e)\geq \d$ and $t^\e<T$. Indeed, we have $\emts(V^\e_*-\psi^\e)(t^\e,w^\e,s^\e, h^\e)\leqslant -2\d$ and on $B_{\a}\backslash B_{0,\a}$ it holds that $(V^\e_*-\psi^\e)(t,w,s,h)>0$.

Now, using that $V^\e_*$ is a supersolution of \eqref{eq:final.cond.Ve} by Assumption~\ref{assum:Ve.viscosity}, and applying Proposition~\ref{prop:remainder.estimate} (ii) to $\psi^\e$ with $\nu=\phi^\e$ and $\varpi=0$, Equation~\eqref{eq:remainder.viscosity} yields
	\begin{align}
		0&{\leq} \cG^\e(\psi^{\e})(\tilde t^{\e}, \tilde w^{\e},\tilde s^{\e},\tilde h^{\e})
		=\e^{2m^*}E_2(\tilde t^{\e}, \tilde w^{\e},\tilde s^{\e}, \phi^\e_t,  \phi^\e_w, \phi^\e_s, \phi^\e_{ww}, \phi^\e_{ws}, \phi^\e_{ss})\notag\\
		&+\e^{2m^*}E_1(\tilde t^{\e}, \tilde w^{\e},\tilde s^{\e},\tilde \xi^{\e},0,0)+| R^\e(\tilde t^{\e}, \tilde w^{\e},\tilde s^{\e},\tilde h^{\e},\psi^{\e})|\notag\\
		&+\ets(\pa_w \psi^\e(\tilde t^{\e}, \tilde w^{\e},\tilde s^{\e},\tilde h^{\e}))^{1-m}(\Phi(\tilde s^\e,0)-\Phi(\tilde s^\e,\ems \bar\phi_h(\tilde t^{\e}, \tilde w^{\e},\tilde s^{\e},\tilde h^{\e}))),\label{eq:Ve.term}
	\end{align}
with $$\frac{| R^\e(\tilde t^{\e}, \tilde w^{\e},\tilde s^{\e},\tilde h^{\e},\psi^{\e})|}{\ets}{\leq} C(1+|\tilde \xi^{\e}|^2)^{\frac{1}{2}+\frac{1}{m}}.$$
Note that up to reducing $\e_0>0$ and taking $0<\d\leqslant \d_\e$ for some small enough $\delta_\e$, the point $(\tilde t^\e, \tilde w^\e, \tilde s^\e)$ is in the set defined in~\eqref{negpower}. Now, the last term in \eqref{eq:Ve.term} is 
$$-(V^0_w(\tilde t^{\e}, \tilde w^{\e},\tilde s^{\e})-\ets\bar \phi^\e_w(\tilde t^{\e}, \tilde w^{\e},\tilde s^{\e},\tilde h^{\e}))^{1-m}\Phi(\tilde s^\e,2c_0\tilde\xi^\e)=-C_\e |\tilde\xi^\e|^m,$$ 
where $C_\e$ is bounded from below away from $0$ as $\e\to 0$ and $\tilde \xi^\e=\ems(\tilde h^\e-h^0(\tilde t^{\e}, \tilde w^{\e},\tilde s^{\e}))$.

The set $\{(\tilde t^{\e}, \tilde w^{\e},\tilde s^{\e})~|~\e\in(0,\e_0)\}$ is bounded, $E_2$ is continuous, $\phi^\e$ and its first and second order derivatives in $w$ and $s$ are $0$ at $(t^\e,w^\e,s^\e,h^\e)$. This, with the definition of $E_1$ and $E_2$ in~\eqref{correc1} and \eqref{correc2}, yields
\begin{align*}
&\frac{\d}{T-t^\e}+\frac{V^0_{ww} (\tilde t^{\e}, \tilde w^{\e},\tilde s^{\e})}{2} \bigg|\sum_{j=1}^{d} \tilde \xi^\e_j \tilde s^\e_j \sigma^j\bigg|^2\leq C(1+|\tilde \xi^{\e}|^2)^{\frac{1}{2}+\frac{1}{m}}-C_\e |\tilde\xi^\e|^m.
\end{align*}
Thus, using that $\sigma\sigma^\top$ is positive definite, that $s$ is bounded away from $0$ on $\{(\tilde t^{\e}, \tilde w^{\e},\tilde s^{\e})~|~\e\in(0,\e_0)\}$ and that $V^0_{ww}$ is negative and bounded from above by $-C<0$ on $\{(\tilde t^{\e}, \tilde w^{\e},\tilde s^{\e})~|~\e\in(0,\e_0)\}$, we obtain
		$$\frac{\delta}{T-t^\e}-C| \tilde \xi^\e|^2  \leq C(1+|\tilde \xi^{\e}|^2)^{\frac{1}{2}+\frac{1}{m}}-C_\e |\tilde\xi^\e|^m.$$
Now, $m>2$ implies that both $ \tilde \xi^\e$ and $\frac{\delta}{T-t^\e}$ are bounded. This is a contradiction with $t^\e\to T$ and the result is proved.
\end{proof}
\begin{proof}[Proof of Theorem~\ref{thm:expansion}]
A combination of Propositions~\ref{prop:supersol},~\ref{prop:subsol} and~\ref{prop:term} allows us to claim that $u^*$ and $u_*$ are respectively viscosity subsolution and supersolution of the second corrector equation \eqref{2nd-corrector-eq} with $0$ final condition. They also satisfy $u^*\geq u_*$ due to their definition as limsup and liminf. 
Thanks to the Assumption~\ref{assum:2ndcorrect} (ii) we also have $u^*\leq u_*$. Denote $u=u^*=u_*$ which is the unique viscosity solution of the  \eqref{2nd-corrector-eq}.
We now have the following inequalities
\begin{align*}
\liminf_{\e\downarrow 0} \frac{ V^0(t,w,s)-V^\e(t,w,s,h^0(t,w,s))}{\e^{2m^*}} &\geqslant\liminf_{\e\downarrow 0} \frac{ V^0(t,w,s)-V^{\e,*}(t,w,s,h^0(t,w,s))}{\e^{2m^*}} \\
&\geqslant  u_*(t,w,s)= u(t,w,s)= u^*(t,w,s) \\
&\geqslant \limsup_{\e\downarrow 0} \frac{ V^0(t,w,s)-V^\e_*(t,w,s,h^0(t,w,s))}{\e^{2m^*}}\\
& \geqslant\limsup_{\e\downarrow 0} \frac{ V^0(t,w,s)-V^\e(t,w,s,h^0(t,w,s))}{\e^{2m^*}} .
\end{align*}
The reverse inequality between the supremum and infimum limits being trivial we have that 
$$\lim_{\e\downarrow 0} \frac{ V^0(t,w,s)-V^\e(t,w,s,h^0(t,w,s))}{\e^{2m^*}} =u(t,w,s).$$
\end{proof}

\section{Appendix}\label{s.appendix}

The Appendix is dedicated to the proof of Proposition~\ref{prop:control-deviation}.
First,  in Section~\ref{s.def.candidate}, we define the strategies that we use to obtain the bound \eqref{eq:control-deviation-bis}. Then in Section~\ref{sec:study.generator}, we study the drift of the process $\Psi^{\e,t,w,s,h}$ defined in \eqref{eq:def.Psi}. In Section~\ref{s.local.boundedness}, we bound the renormalized loss of utility due to price impact; see Lemmas~\ref{lem:dev-control},~\ref{lem:bound.T.Re},~\ref{lem:contcons} and~\ref{lem:moment.W0}. Finally, we provide the proofs of Proposition~\ref{prop:control-deviation} and of Lemma~\ref{lem:probexit} in Section~\ref{s:proof.prop.lem}.

\subsection{Candidate Asymptotically Optimal Strategies}\label{s.def.candidate}
Let $0<\e\leq 1$. Consider the following function (note that it is of the form of the functions studied in Section~\ref{s.remainder} and that it corresponds to the candidate value function expansion obtained in Theorem~\ref{th:example})
\begin{align}
	\label{eq:def.psie}
	&\psi^\e(t,w,s,h):=V^0(t,w,s)-\e^{2m^*}\left(u (t,w,s)+\e^{2m^*}\varpi\left(t,w,s,\frac{h-h^0(t,w,s)}{\e^{m^*}}\right)\right)\\
	&=g(t)U(w)-(w\e)^{2m^*} \lambda w^{1-R} \bar g(t)-(w\e)^{4m^*}\lambda g(t)w^{1-R}\tilde \varpi\left(\frac{\s}{(w\e)^{m^*}} \left(\frac{h\times s}{w}-\pi\right)\right).\notag
\end{align}
Also denote the set of admissible states 
\begin{align}
\label{eq:admissible.states}
	\cA:=\bigg\{(t,w,s,h)\in\cD\times \R^d:h_i> 0\mbox{ for all } 1\leqslant i\leqslant d\ \mbox{ and }\sum_{i=1}^{d} \frac{s_ih_i}{w}< 1 \bigg\}.
\end{align}
We will need the following property of $\pa_w\psi^\e$.

\begin{lemma}\label{lem:varpsie}
There exists $c_W>0$ such that if $(t,w,s,h)\in\cA$ we have that 
\begin{align}\label{eq:boundedpawpsi}
1+c_W(w\e)^{\frac{1}{m}}\geq \frac{\pa_w \psi^\e(t,w,s,h)+\ets \pa_w u(t,w,s)}{g(t) w^{-R}}\geq 1-c_W(w\e)^{\frac{1}{m}}\notag\\
\mbox{ and }1+c_W((w\e)^{2m^*}+(w\e)^{\frac{1}{m}})\geq \frac{\pa_w \psi^\e(t,w,s,h)}{g(t) w^{-R}}\geq 1-c_W((w\e)^{2m^*}+(w\e)^{\frac{1}{m}}),
\end{align}
for all $(t,w,s,h)\in\cD\times \R^d$.
\end{lemma}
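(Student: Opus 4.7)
The plan is to prove both inequalities by a direct computation, exploiting the explicit factorizations for $u$ and $\varpi$ established in Corollary~\ref{cor:sol.example.u}. Let me introduce the shorthand $y := \frac{\s}{(w\e)^{m^*}}\bigl(\frac{h\times s}{w}-\pi\bigr)$, so that the last term of $\psi^\e$ in \eqref{eq:def.psie} reads $(w\e)^{4m^*}\lambda g(t) w^{1-R}\tilde\varpi(y)$. Using $V^0_w = g(t)w^{-R}$ and the identity $(3m-2)m^*=1$ (so that $3mm^*-1=2m^*$), the term $\e^{2m^*}u(t,w,s) = \lambda \e^{2m^*}w^{3mm^*-R}\bar g(t)$ differentiates to $\lambda\beta(w\e)^{2m^*}w^{-R}\bar g(t)/w\cdot w = \lambda\beta \bar g(t) g(t)^{-1}(w\e)^{2m^*}V^0_w$ after dividing by $V^0_w$; this is the $(w\e)^{2m^*}$ contribution appearing in the second bound but not the first.

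Next I would compute $\pa_w y = -\frac{1+m^*}{w}y - \frac{\s\pi}{w(w\e)^{m^*}}$ and plug it into the chain rule for $\pa_w\bigl[(w\e)^{4m^*}\lambda g(t)w^{1-R}\tilde\varpi(y)\bigr]$. After dividing the whole expression by $V^0_w = g(t)w^{-R}$, one is left with three contributions:
\begin{align*}
\lambda(1-R+4m^*)(w\e)^{4m^*}\tilde\varpi(y),\quad -\lambda(1+m^*)(w\e)^{4m^*}\tilde\varpi_y(y)\cdot y,\quad -\lambda(w\e)^{3m^*}\tilde\varpi_y(y)\cdot\s\pi.
\end{align*}
The claim is that each of these is $O((w\e)^{1/m})$ on $\cA$, which is precisely what produces the first stated inequality.

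The control of the three terms uses two ingredients. First, from Lemma~\ref{lem:growth.varpi.tilde} I have $|\tilde\varpi(y)|\leq C(1+|y|^{1+2/m})$ and $|\tilde\varpi_y(y)|\leq C(1+|y|^{2/m})$. Second, the admissibility constraint defining $\cA$ forces $\bigl|\frac{h\times s}{w}-\pi\bigr|$ to remain bounded by a fixed constant (since both $h\times s/w$ and $\pi$ lie in the simplex), which translates to $|y|(w\e)^{m^*}\leq C$. The arithmetic $4m^*-m^*(1+\tfrac{2}{m}) = m^*\frac{3m-2}{m} = \frac{1}{m}$ and $3m^* - \frac{2m^*}{m} = \frac{1}{m}$ then show that every offending power of $|y|$ is exactly absorbed so that each term is dominated by $C(w\e)^{1/m}$. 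This gives the first inequality; the second follows by adding the $\lambda\beta\bar g(t)/g(t)\cdot(w\e)^{2m^*}$ contribution from $\e^{2m^*}\pa_w u$ that was separated out at the start.

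The only subtle step is the bookkeeping of exponents: making sure that no intermediate term produces a negative power of $w\e$ once the $\cA$-bound $|y|(w\e)^{m^*}\leq C$ is invoked; the key identity $(3m-2)m^*=1$ ensures this always works out cleanly. Everything else is routine differentiation and substitution of the explicit formulas from Corollary~\ref{cor:sol.example.u}, so I do not anticipate any serious obstacle beyond careful tracking of constants uniform over the local region $w\e\leq 1$.
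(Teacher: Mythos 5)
Your proposal is correct and follows essentially the same route as the paper: differentiate the factorized $\psi^\e$ in $w$, divide by $V^0_w=g(t)w^{-R}$, and absorb the three $\tilde\varpi$-contributions into $C(w\e)^{1/m}$ using the growth bounds on $\tilde\varpi,\tilde\varpi_x$, the admissibility bound $\bigl|\tfrac{h\times s}{w}-\pi\bigr|\leq C$ on $\cA$, and the identity $(3m-2)m^*=1$. The one refinement you should make is to use the \emph{homogeneous} bounds $\tilde\varpi(\xi)\leq C|\xi|^{1+2/m}$ and $|\tilde\varpi_x(\xi)|\leq C|\xi|^{2/m}$ (obtained, as in the paper, by combining Lemma~\ref{lem:growth.varpi.tilde} with the local quadratic behaviour forced by $\tilde\varpi(0)=\tilde\varpi_x(0)=0$ and the boundedness of $\tilde\varpi_{xx}$) rather than the inhomogeneous ones with the ``$1+$'': the extra constant would leave behind terms of order $(w\e)^{4m^*}$, which are only dominated by $(w\e)^{1/m}$ under your additional restriction $w\e\leq 1$, whereas with the homogeneous bounds the stated estimate holds on all of $\cA$.
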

\begin{proof}
By differentiation we obtain 
\begin{align*}
&\pa_w \psi^\e(t,w,s,h)\\
&=g(t)w^{-R}-(1-R+2m^*)\bar g(t)\lambda w^{-R} (w\e)^{2m^*}\\
&-(1-R+4m^*)(w\e)^{4m^*}\lambda g(t)w^{-R}\tilde \varpi\left(\frac{\s}{(w\e)^{m^*}} \left(\frac{h\times s}{w}-\pi\right)\right)\\
&+(w\e)^{3m^*}\lambda g(t)w^{-R}\left(\s\left((1+m^*)\frac{h\times s}{w}-m^*\pi\right)\right)\cdot\tilde \varpi_x\left(\frac{\s}{(w\e)^{m^*}} \left(\frac{h\times s}{w}-\pi\right)\right).
\end{align*}
Note that the continuity of the second derivative $\tilde \varpi_{xx}$ and the fact that $\tilde \varpi(0)=0=\tilde \varpi_x(0)$ imply that for $|\xi|\leq 1$, $\tilde \varpi(\xi)\leq C|\xi|^2$ and $|\tilde \varpi_x(\xi)| \leq C|\xi|$. Combined with the bounds in Lemma 4.2 these inequalities yield
\begin{align}
\label{eq:lower.bound.varpi}
	\tilde \varpi(\xi) \leq C|\xi|^{1+\frac{2}{m}} \mbox{ and } |\tilde \varpi_x(\xi)| \leq C|\xi|^{\frac{2}{m}}\ \mbox{for}\ \xi\in\R^d.
\end{align}
Now, considering the boundedness from above and away from zero of $g, \bar g$, and the fact that $\frac{s^ih^i}{w}$ is uniformly bounded on $\cA$ for all $1\leq i\leq d$, we obtain that 
\begin{align*}
|(1-R+2m^*)\frac{\bar g(t)}{g(t)}(w\e)^{2m^*}|&\leq c_W(w\e)^{2m^*},\\
(1-R+4m^*)(w\e)^{4m^*}\lambda \tilde \varpi\left(\frac{\s}{(w\e)^{m^*}} \left(\frac{h\times s}{w}-\pi\right)\right)&\leq \frac{1}{2} c_W (w\e)^{\frac{1}{m}},\\
(w\e)^{3m^*}\lambda \left(\s\left((1+m^*)\frac{h\times s}{w}-m^*\pi\right)\right)\cdot\tilde \varpi_x\left(\frac{\s}{(w\e)^{m^*}} \left(\frac{h\times s}{w}-\pi\right)\right)&\\
\leq  \frac{1}{2}c_W (w\e)^{3m^*-(\frac{2}{m})m^*}&=\frac{1}{2}c_W (w\e)^{\frac{1}{m}},
\end{align*}
for some $c_W>0$.
\end{proof}
Define the feedback control functions
\begin{align}
\label{eq:def.ce.thetae}
c^0(t,&w,s):=-\tilde U'(V^0_w (t,w,s))=g(t)^{-\frac{1}{R}}w,\\
\theta^\e_j(t,w,&s,h):=\e^{-1}\Phi_{x_j}\left(s,\frac{-\e^{3m^*}}{V^0_w(t,w,s)}\varpi_\xi\left(t,w,s,\frac{h-h^0(t,w,s)}{\varepsilon^{m^*}}\right)\right)\label{eq:def.thetae}\\
=&\ems V_w^0\left(t,w,s\right)^{1-m}\Phi_{x_j}\left(s,-\varpi_\xi\left(t,w,s,\frac{h-h^0(t,w,s)}{\varepsilon^{m^*}}\right)\right)\notag\\
=&-(w\e)^{-m^*}\sum_{i=1}^d  \left(\frac{ w }{\kappa^{m-1}s^j}\left|\tilde \varpi_{x_i}\left(X(t,w,s,h)\right)\right|^{m-2}\tilde \varpi_{x_i}\left(X(t,w,s,h)\right)\right)(\s^{-1})^{i,j}.\notag
\end{align}
Note that in this example, the function $\Phi_x$ defined in \eqref{eq:Phix.example} is odd and the functions $u$ and $\varpi$ are the solutions of the corrector equations \eqref{1st-corrector-eq} and \eqref{2nd-corrector-eq} whose properties are listed in Lemma~\ref{lem:growth.varpi.tilde}. We fix an initial condition $(t,w,s,h)\in \cD\times\R^d$ and consider $\e\in(0,(T-t)^{1/2m^*})$. We denote by $ W^{\e,t,w,s,h}$ and $ H^{\e,t,w,s,h}$ the state variables controlled with the above $c^0$ and $\theta^\e$ starting at $(t,w,s,h)$ up to the stopping time \eqref{eq:def.tau.e}. Additionally, we define the rescaled portfolio weights displacement 
\begin{align}
&X^{\e,t,w,s,h}_u = \s\left(\frac{\frac{ H^{\e,t,w,s,h,1}_uS^1_u}{ W^{\e,t,w,s,h}_u}-\pi^1}{(\e  W^{\e,,t,w,s}_u)^{m^*}},\ldots, \frac{\frac{ H^{\e,t,w,s,h,d}_uS^d_u}{ W^{\e,t,w,s,h}_u}-\pi^d}{(\e  W^{\e,t,w,s,h}_u)^{m^*}}\right),\label{eq:rescaled.ptf.weight}\\
&\tau^{\e,t,w,s,h} = \inf\bigg\{u\in[t,T]\bigg|| W^{\e,t,w,s,h}_u-W^0_u|\geqslant \frac{\pi^*}{2}W^0_u\mbox{ or }W^0_u\leqslant \es\mbox{ or }\e W^{0}\geqslant\frac{2}{2+\pi^*} \Big(\frac{1}{4c_{W}}\wedge 1\Big)^{\frac{1}{2m^*}}\notag\\
&\quad\mbox{ or }( W^{\e,t,w,s,h}_u)^{\frac{(m+2)m^*}{m}}\left(1+ \tilde \varpi\left(X^{\e,t,w,s,h}_u\right)\right) \geqslant  \bigg(\frac{(\lambda_{min}\pi^*)^2}{16C_{\tilde \varpi}^{\frac{2m}{m+2}}d^2 \ets}\bigg)^{\frac{m+2}{2m}}\bigg\}\wedge(T-\ets),\label{eq:def.tau.e}
\end{align}
where $\pi^*$ and $C_{\tilde \varpi}$ are given by $\pi^*:=\inf_{1\leqslant i\leqslant d}\pi_i \wedge (1-\sum_{i=1}^d \pi_i)>0$ and
$C_{\tilde \varpi}:=\sup_{x}\frac{|x|^{1+2/m}}{1+\tilde \varpi(x)}<\infty$ (cf. \eqref{eq:lower.bound.varpi}) and $c_{W}$ is the constant defined in Lemma \eqref{lem:varpsie}.
This complicated form of $\tau^{\e,t,w,s,h}$ is due to two reasons. First, we are only able to express the strong mean reversion to the frictionless position by applying It\^{o}'s formula to $\tilde \varpi(X^{\e,t,w,s,h}_u)$(see \eqref{eq:dynamics.varpi}). Indeed, application of It\^{o}'s formula to quantities such as  
\begin{equation*}
\sum_{i=1}^d  \bigg|\frac{H^{\e,t,w,s,h,i}_rS^i_r}{W^{\e,t,w,s,h}_r}-\pi^i\bigg|^2\mbox{ or }\bigg|\frac{H^{\e,t,w,s,h,i}_rS^i_r}{W^{\e,t,w,s,h}_r}-\pi^i\bigg|^2 \mbox{ for some }i
\end{equation*}
do not provide any expression that could allow us to claim that these quantities are small for $\e>0$ small. 
The second reason is that if we do not multiply $(1+ \tilde \varpi(X^{\e,t,w,s,h}_u))$ by $( W^{\e,t,w,s,h}_u)^{\frac{(m+2)m^*}{m}}$ we will have to study the hitting time of a process to a random barrier. It appears that $( W^{\e,t,w,s,h}_u)^{\frac{(m+2)m^*}{m}}(1+ \tilde \varpi(X^{\e,t,w,s,h}_u))$ is the simplest expression allowing us to define the liquidation time as the hitting time of a constant barrier by a mean reverting process. 

Now, we consider an investor following the control $c^0$ and $\theta^\e$ as given on $\llbracket t, \tau^{\e,t,w,s,h}\rrbracket$ and who liquidates her invested position on $ \llbracket \tau^{\e,t,w,s,h}, \tau^{\e,t,w,s,h}+\eeta\rrbracket$, and consumes the \emph{remaining cash}\footnote{This ensures the strict positivity of the wealth until final time and that the lump sum consumption at the final time is strictly positive.} at rate $c^0(t,W^{\e,t,w,s,h}_t, S_t)$ on $\llbracket \tau^{\e,t,w,s,h}, T\rrbracket$. 
The controls for $W^{\e,t,w,s,h}$ and $H^{\e,t,w,s,h}$ are 
\begin{align}\label{eq:trading.rate.cand}
	\dot{H}^{\e,t,w,s,h}_r =\begin{cases} 
	\theta^\e\left(r, W^{\e,t,w,s,h}_r, S_r, H^{\e,t,w,s,h}_r\right) &\text{on}\ \llbracket t, \tau^{\e,t,w,s,h}\rrbracket\\
	-\emeta{H}^{\e,t,w,s,h}_{\tau^{\e,t,w,s,h}}  &\text{on}\ \llbracket \tau^{\e,t,w,s,h}, \tau^{\e,t,w,s,h}+\eeta\rrbracket\\
		0 &\text{on}\ \llbracket \tau^{\e,t,w,s,h}+\eeta,T\rrbracket,
		\end{cases}
\end{align}
with the consumption process
\begin{align}
\label{eq:consumption.cand}C^{\e,t,w,s,h}_r =\begin{cases} 
	c^0\left(r, W^{\e,t,w,s,h}_r, S_r\right) &\text{on}\ \llbracket t, \tau^{\e,t,w,s,h}\rrbracket\\
	C^{\e,t,w,s,h}_r ,&\text{on}\ \llbracket \tau^{\e,t,w,s,h}, \tau^{\e,t,w,s,h}+\eeta\rrbracket\\
		c^0\left(r, W^{\e,t,w,s,h}_r, S_r\right) &\text{on}\ \llbracket \tau^{\e,t,w,s,h}+\eeta,T\rrbracket,
		\end{cases}
\end{align}
where the choice of the consumption on $\llbracket \tau^{\e,t,w,s,h}, \tauet+\ets\rrbracket$ is kept so that
\begin{align*}
	\e W^{\e,t,w,s,h}_{r}\leq \frac{1}{(4c_W)^{\frac{1}{2m^*}}}\wedge 1\wedge 2\e W^0_r
\end{align*} 
on this interval. It is also chosen large enough so that if
\begin{align}
\label{cond:exit}
\tau^{\e,t,w,s,h}<T-\eeta\mbox{ then }W^{\e,t,w,s,h}_{\tau^{\e,t,w,s,h}+\ets}\leq 1\wedge\frac{1}{\e(4c_W)^{\frac{1}{2m^*}}},
\end{align}
and small enough so that
\begin{align*}
	W^{\e,t,w,s,h}_{r}\geq \frac{\pi^*}{2} W^0_{\tauet}\mbox{ for }r\in\llbracket \tau^{\e,t,w,s,h}, \tauet+\ets\rrbracket.
\end{align*}
The first of these last two inequalities implies in particular by Lemma~\ref{lem:varpsie} that along admissible portfolios it holds
\begin{align}
\label{eq:bound.along.strat}
	\frac{1}{2}g(t)w^{-R}\leq \pa_w \psi^\e(t,w,s,h)\leq \frac{3}{2}g(t)w^{-R}.
\end{align}
On $\llbracket\tau^{\e,t,w,s,h}+\ets, T\rrbracket$, $W^{\e,t,w,s,h}$ satisfies the SDE $dW^{\e,t,w,s,h}_z=(r-g(z)^{-\frac{1}{R}})W^{\e,t,w,s,h}_zdz$. Its supremum has therefore moments of all positive and negative orders on this interval. We have additionally
\begin{align*}
\frac{W^0_r}{C}\leq W^{\e,t,w,s,h}_r&\leq C W^0_r\mbox{ on }\llbracket t,\tau^{\e,t,w,s,h}\rrbracket.
\end{align*}
Finally, note that this strategy is indeed admissible as proven in Lemma~\ref{lem:probexit}. 

Define also on $[t,T]$ the processes 
\begin{align}
\Psi^{\e,t,w,s,h}_r &:= \psi^\e(r, W^{\e,t,w,s,h}_r,S_r, H^{\e,t,w,s,h}_r)\label{eq:def.Psi},\\
\varPi^{\e,t,w,s,h}_{\xi,r} &:=\varpi_\xi\left(r, W^{\e,t,w,s,h}_r,S_r,\frac{H^{\e,t,w,s,h}_r-h^0( r,W^{\e,t,w,s,h}_r,S_r)}{\es}\right)\notag\\
&=g(r) (W^{\e,t,w,s,h}_r)^{-R+3m^*}\Big(s\times \s\tilde \varpi_x\left(X^{\e,t,w,s,h}_r\right)\Big).\label{eq:def.varPi}
\end{align}
The following lemma provides a bound on the probability of stopping stricly before $T-\eeta$ when using the controls we just defined. Its proof will be given in Section~\ref{s:proof.prop.lem} after the study of the generator of $\Psi^{\e,t,w,s,h}$ in Section~\ref{sec:study.generator} and the necessary auxiliary Lemmas stated and proved in Section~\ref{s.local.boundedness}.
\begin{lemma}\label{lem:probexit}
	The control defined above is admissible and there exists $C\in \F_{comp}$ such that for all $(t,w,s)\in \cD$, there exists $\d_1>0$ such that for all $h\in \R^d$ satisfying $|\frac{h_is_i}{w}-\pi_i|{\leq} \delta_1$ we have   
	\begin{equation*}
		\P(\tau^{\e,t,w,s,h}< T-\ets){\leq} \ets C(t,w,s).
	\end{equation*}
\end{lemma}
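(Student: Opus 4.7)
The plan is to verify admissibility and then decompose the event $\{\tauet<T-\ets\}$ according to which of the four barriers in \eqref{eq:def.tau.e} is hit first, bounding each resulting probability by $\ets C(t,w,s)$ for some $C\in\F_{comp}$. Admissibility on $\llbracket t,\tauet\rrbracket$ follows directly from the barriers: barrier (a) forces $W^{\e,t,w,s,h}\geq(\pi^*/2)W^0>0$, while barrier (d) together with the explicit form \eqref{eq:strat.frictionless} of $h^0$ keeps each portfolio weight $H^{\e,t,w,s,h,i}S^i/W^{\e,t,w,s,h}$ strictly between $0$ and $1$ once $h$ is close enough to $h^0(t,w,s)$ and $\e$ is small. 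On the liquidation interval $\llbracket\tauet,\tauet+\ets\rrbracket$ the prescribed consumption keeps $W^{\e,t,w,s,h}\geq(\pi^*/2)W^0_{\tauet}>0$, and on $\llbracket\tauet+\ets,T\rrbracket$ the wealth solves a deterministic ODE (given $S$) which keeps it positive with the required terminal condition.

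Barriers (b) and (c) depend only on $W^0$, which by Example~\ref{ex:frictionless} is an explicit function of a geometric Brownian motion. Markov's inequality applied to $\sup_{u\in[t,T]}(W^0_u)^{\pm k}$ for $k$ large enough yields $\P(\inf_u W^0_u\leq\es)+\P(\sup_u \e W^0_u\geq\text{const})\leq \ets C(t,w,s)$ with $C\in\F_{comp}$. For barrier (a), one applies It\^o to $W^{\e,t,w,s,h}-W^0$, using that by \eqref{eq:rescaled.ptf.weight} the displacement $H^{\e,t,w,s,h}-h^0(r,W^{\e,t,w,s,h},S)$ is of order $(\e W^\e)^{m^*}|X^\e|$, and that the trading-cost term $\sum_j\theta^{\e,j}f^j(S,\e\theta^\e)$ is $O(\ets(1+|X^\e|^{1+2/m}))$ by the homogeneity of $f$ and the growth \eqref{eq:growth.condition.varpi2} of $\tilde\varpi_x$. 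A Doob maximal inequality combined with the moment bound on $\tilde\varpi(X^\e)$ from the Lyapunov step below gives $\E[\sup|W^\e-W^0|^{2k}]\leq\e^{2km^*}C(t,w,s)$ for $k\geq 1$, and Markov closes this barrier.

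The main obstacle is barrier (d), which is where the first corrector equation enters decisively. Let $L^\e_r:=(W^{\e,t,w,s,h}_r)^{(m+2)m^*/m}(1+\tilde\varpi(X^{\e,t,w,s,h}_r))$. Applying It\^o to $\tilde\varpi(X^\e_r)$ using the factorization \eqref{eq:factorisation} and the definition \eqref{eq:def.thetae} of $\theta^\e$ (which is the pointwise maximizer in the Hamiltonian appearing in \eqref{1st-corrector-eq-bis-cstt}), the drift of $\tilde\varpi(X^\e_r)$ reduces, up to $O(\es)$ corrections involving $(t,w,s)$-derivatives of the factorization, to
\begin{align*}
\frac{1}{2R^2}\text{Tr}\big(\tilde\varpi_{xx}(X^\e)(\Sigma\s)^\top(\Sigma\s)\big)-\frac{1}{m\kappa^{m-1}}\sum_{j=1}^d|\tilde\varpi_{x_j}(X^\e)|^m=\lambda-\tfrac{R}{2}|X^\e|^2,
\end{align*}
by the PDE \eqref{1st-corrector-eq-bis-cstt}. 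This produces genuine quadratic-in-$|X^\e|$ mean reversion for $\tilde\varpi(X^\e)$, against which the contribution of the prefactor $(W^\e)^{(m+2)m^*/m}$ (controllable via barriers (b), (c)) and the martingale fluctuations are strictly lower order. A standard supermartingale/Gronwall argument then gives $\E[\sup_{r\leq\tauet}(L^\e_r)^k]\leq C(t,w,s)(1+L^\e_t)^k$ uniformly in $\e$ for any $k\geq 2$, and Markov's inequality at the barrier level of order $\e^{-(m+2)m^*/m}$ with $k\geq 2m/(m+2)$ yields $\P(\sup L^\e\geq\text{barrier})\leq\ets C(t,w,s)$. The technical hurdle, which is the ``extremely technical'' application of It\^o mentioned in the introduction, is to carry the It\^o expansion cleanly through the factorization and to show that all cross-derivative terms generated by the $(t,w,s)$-dependence of $\varpi$ are indeed $O(\es)$ uniformly, using Lemma~\ref{lem:bounded.second.derivative} and the growth bounds \eqref{eq:growth.condition.varpi1}--\eqref{eq:growth.condition.varpi2}.
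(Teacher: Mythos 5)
Your overall architecture (admissibility read off from the barriers, decomposition of $\{\tauet<T-\ets\}$ over the four barriers, Markov's inequality for the two barriers involving only $W^0$, and the first corrector equation supplying quadratic mean reversion for $\tilde\varpi(X^{\e,t,w,s,h})$) matches the paper's. The treatment of barriers (b), (c) and (a) is essentially the paper's argument (the paper works with the ratio $W^{\e,t,w,s,h}/W^0$ rather than the difference, and bounds the relevant quadratic variation by $\ets$ times an integrated power of the Lyapunov process, but this is a cosmetic difference).

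The gap is in your treatment of barrier (d). Writing $L^\e_r=(W^{\e,t,w,s,h}_r)^{\frac{(m+2)m^*}{m}}(1+\tilde\varpi(X^{\e,t,w,s,h}_r))$, note that the barrier level is of order $\e^{-\frac{(m+2)m^*}{m}}$ while the \emph{initial value} $L^\e_t$ is of order $\e^{-m^*(1+\frac{2}{m})}\,|h\times s/w-\pi|^{1+\frac{2}{m}}=\e^{-\frac{(m+2)m^*}{m}}\,|h\times s/w-\pi|^{1+\frac{2}{m}}$, i.e. of exactly the same order in $\e$ (this is why the hypothesis $|h_is_i/w-\pi_i|\leq\delta_1$ is needed at all: it only adjusts the constant). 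Consequently your step ``$\E[\sup_{r\leq\tauet}(L^\e_r)^k]\leq C(1+L^\e_t)^k$, then Chebyshev at the barrier'' can never produce a factor $\ets$: the ratio (initial value)/(barrier) is bounded below by a positive constant independent of $\e$ for any $h\neq h^0(t,w,s)$, so Chebyshev yields only an $O(1)$ bound. The paper circumvents this by splitting $L^\e=K^\e+M^\e$ into its finite-variation and martingale parts: the mean reversion coming from \eqref{1st-corrector-eq-bis-cstt} shows that the event on which $K^\e$ alone climbs to half the barrier is empty for small $\e$ (the drift turns strictly negative well below the barrier), so the barrier can only be reached if the \emph{martingale} part travels a distance of order $\e^{-\frac{(m+2)m^*}{m}}$; that probability is then bounded by Chebyshev and Burkholder--Davis--Gundy via $\e^{\frac{2m^*(m+2)}{m}}\E[\int\langle M^\e\rangle]\leq C\e^{\frac{2m^*(m+2)}{m}}\E[\int (L^\e_u)^2du]$, where the last expectation is controlled (it still diverges like $\e^{-4m^*/m}$) by applying It\^o to $(L^\e)^\alpha$ for a specifically calibrated $\alpha$ and a reverse H\"older inequality, the exponents conspiring to give exactly $\ets$. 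This drift/martingale separation, together with the calibrated moment estimate replacing a uniform-in-$\e$ one (which is in any case not available, since $L^\e_t\to\infty$ as $\e\to0$), is the essential idea missing from your proposal.
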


\subsection{Semi-Martingale Decomposition of \eqref{eq:def.Psi}}
\label{sec:study.generator}
Define the following process,
\begin{align*}
	&\tilde G^{\e,t,w,s,h}_r = \ets\left(\left(\partial_w\psi^\e\left(r,W^{\e,t,w,s,h}_r,S_r,H^{\e,t,w,s,h}_r\right)\right)^{-m}-\left(V^0_w(r,W^{\e,t,w,s,h}_r,S_r)\right)^{-m}\right)\\
	&\quad\times\partial_w\psi^\e\left(r,W^{\e,t,w,s,h}_r,S_r,H^{\e,t,w,s,h}_r\right)\Phi_x\left(S_r,-\varPi^{\e,t,w,s,h}_{\xi,r}\right)f\left(S_r,\Phi_x\left(S_r,-\varPi^{\e,t,w,s,h}_{\xi,r}\right)\right)\\
	&+\ets\left(\left(\partial_w\psi^\e\left(r,W^{\e,t,w,s,h}_r,S_r,H^{\e,t,w,s,h}_r\right)\right)^{1-m}-\left(V^0_w(r,W^{\e,t,w,s,h}_r,S_r)\right)^{1-m}\right)\\
	&\quad\times\varPi^{\e,t,w,s,h}_{\xi,r}\cdot\Phi_x\left(S_r,-\varPi^{\e,t,w,s,h}_{\xi,r}\right).
\end{align*}
On $\llbracket \tau^\e,\tau^\e+\eeta\rrbracket$, $\tilde G^{\e,t,w,s,h}$ is given by
\begin{align*}
	&\tilde G^{\e,t,w,s,h}_r = \ets\partial_w\psi^\e\left(r,W^{\e,t,w,s,h}_r,S_r,H^{\e,t,w,s,h}_r\right)^{1-m}\Phi_x\left(S_r,-\varPi^{\e,t,w,s,h}_{\xi,r}\right)\\
	&\quad\quad f\left(S_r,\Phi_x\left(S_r,-\varPi^{\e,t,w,s,h}_{\xi,r}\right)\right)\\
	&-\e^{\frac{m^*(m-2)}{m-1}}\partial_w\psi^\e\left(r,W^{\e,t,w,s,h}_r,S_r,H^{\e,t,w,s,h}_r\right)H^{\e,t,w,s,h}_{\tau^\e}\cdot f\left(S_r,{H^{\e,t,w,s,h}_{\tau^\e}}\right)\notag\\
	&+\ets\left(\partial_w\psi^\e\left(r,W^{\e,t,w,s,h}_r,S_r,H^{\e,t,w,s,h}_r\right)\right)^{1-m}\varPi^{\e,t,w,s,h}_{\xi,r}\cdot\Phi_x\left(s,-\varPi^{\e,t,w,s,h}_{\xi,r}\right)\notag\\
		&+\es \varPi^{\e,t,w,s,h}_{\xi,r}H^{\e,t,w,s,h}_{\tau^\e},
\end{align*}
and on $\llbracket \tau^\e+\ets,T\rrbracket$ we define
\begin{align*}
	\tilde G^{\e,t,w,s,h}_r &= \ets\partial_w\psi^\e\left(r,W^{\e,t,w,s,h}_r,S_r,H^{\e,t,w,s,h}_r\right)^{1-m}\Phi_x\left(S_r,-\varPi^{\e,t,w,s,h}_{\xi,r}\right)\\
	&\qquad\qquad\qquad\qquad\cdot\bigg[f\left(S_r,\Phi_x\left(S_r,-\varPi^{\e,t,w,s,h}_{\xi,r}\right)\right)+\varPi^{\e,t,w,s,h}_{\xi,r}\bigg]\\
	&=- \ets\partial_w\psi^\e\left(r,W^{\e,t,w,s,h}_r,S_r,H^{\e,t,w,s,h}_r\right)^{1-m}\Phi\left(S_r,-\varPi^{\e,t,w,s,h}_{\xi,r}\right)\leq 0.
\end{align*}
$\tilde G^{\e,t,w,s,h}$ takes into account the difference in drift of $\Psi^{\e,t,w,s,h}$ if it had been conrolled by $c^0$ (same control for the consumption) and the optimizer of the Hamiltonians in $h$ in the functional $\cG^\e(\psi^\e)$. This allows us to relate the remainder estimate of Section~\ref{s.remainder} (Equation~\eqref{eq:psi.e.expnsion}) to the drift of $\Psi^{\e,t,w,s,h}$ below (see Equation~\eqref{eq:Ge.psie}). We now give the following estimates for $\tilde G^{\e,t,w,s,h}$.
\begin{lemma}\label{lem:conttidleG}
It holds,
\begin{align*}
|\tilde G^{\e,t,w,s,h}_r| &\leq C(\e W^{\e,t,w,s,h}_r)^{2m^*}(W^{\e,t,w,s,h}_r)^{1-R}\mbox{ on }\llbracket t,\tau^\e\rrbracket,\\
|\tilde G^{\e,t,w,s,h}_r|&\leq C \Big((W^{\e,t,w,s,h}_r)^{1-R}+ (W^{\e,t,w,s,h}_r)^{-R}(W^{0}_{\tauet})^{\frac{m}{m-1}}\\
&\qquad\qquad\qquad+ (W^{\e,t,w,s,h}_r)^{\frac{1}{m}-R}(W^{0}_{\tauet})\Big) \mbox{ on }\llbracket \tau^\e,\tau^\e+\e^{2m^*}\rrbracket,\\
|\tilde G^{\e,t,w,s,h}_r|&\leq C(W^{\e,t,w,s,h}_r)^{1-R}\leq C \mbox{ on }\llbracket \tau^\e+\e^{2m^*},T\rrbracket.
\end{align*}
\end{lemma}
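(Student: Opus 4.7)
The plan is to bound $|\tilde G^{\e,t,w,s,h}_r|$ separately on each of the three intervals defining $\tilde G^{\e,t,w,s,h}$, exploiting the explicit factorizations in Corollary~\ref{cor:sol.example.u} and the admissibility constraints enforced by $\tau^{\e,t,w,s,h}$. On $\llbracket t, \tau^{\e,t,w,s,h}\rrbracket$, I would first combine the two terms of $\tilde G$ into an expression proportional to $\Phi(s, -\varPi^{\e,t,w,s,h}_{\xi,r})$ using the convex duality identity $\Phi_x(s,x)\cdot f(s, \Phi_x(s,x)) = (m-1)\Phi(s,x)$, the Euler identity $\Phi_x(s,x)\cdot x = m\Phi(s,x)$, and the oddness of $\Phi_x$ from Assumption~\ref{assum:phi}. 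A mean-value estimate for the smooth maps $y \mapsto y^{-m}$ and $y \mapsto y^{1-m}$ around $V^0_w$, combined with Lemma~\ref{lem:varpsie} and the admissibility bound \eqref{eq:bound.along.strat}, then yields $|\tilde G| \leq C\ets(\e W^{\e}_r)^{2m^*}(V^0_w)^{1-m}\Phi(s, -\varPi^{\e}_{\xi,r})$. Using the explicit expressions \eqref{eq:Phix.example} and \eqref{eq:factorisation} together with the arithmetic identity $3mm^* = 1 + 2m^*$ simplifies this to $C\ets(\e W^\e_r)^{2m^*}(W^\e_r)^{1-R+2m^*}\sum_j|\tilde\varpi_{x_j}(X^{\e}_r)|^m$. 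The bound $|\tilde\varpi_{x_j}|^m \leq C(1+|X^\e_r|^2)$ from \eqref{eq:growth.condition.varpi2} together with the calibrated estimate $|X^{\e}_r|^2 \leq C(\e W^\e_r)^{-2m^*}$ built into the stopping rule \eqref{eq:def.tau.e} then closes the argument.

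On $\llbracket \tau^{\e}, \tau^{\e} + \eeta \rrbracket$, $\tilde G$ has four terms that I would bound separately. The first and third are structurally identical to those on the previous interval and yield $C(W^\e_r)^{1-R}$ contributions via \eqref{eq:bound.along.strat}, the explicit formulas, and the estimate $|X^\e_r| \leq C(\e W^\e_r)^{-m^*}$ which follows from the admissibility constraint $\sum_i H^i_r S^i_r \leq W^\e_r$ together with the consumption rule $W^\e_r \geq (\pi^*/2) W^0_{\tau^\e}$ enforced on this interval. The second term is bounded using the $1/(m-1)$-homogeneity of $f$ (which produces the prefactor $\e^{m^*(m-2)/(m-1)}$ via the identity $-2m^* + (1-2m^*)/(m-1) = m^*(m-2)/(m-1)$) and the admissibility bound $H^i_{\tau^\e} S^i_r \leq C W^0_{\tau^\e}$, yielding the contribution $C(W^\e_r)^{-R}(W^0_{\tau^\e})^{m/(m-1)}$. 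The fourth term, $\es \varPi^\e_{\xi,r}\cdot H^\e_{\tau^\e}$, is bounded using the same admissibility estimate for $H_{\tau^\e}$ together with $|\tilde\varpi_x(X^\e)| \leq C(1 + |X^\e|^{2/m})$ and the arithmetic identity $3m^* - 1/m = 2m^*/m$, producing $C(W^\e_r)^{1/m - R} W^0_{\tau^\e}$.

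On $\llbracket \tau^{\e} + \eeta, T \rrbracket$, the trading has stopped so $H_r = 0$ and therefore $X^\e_r = -\s\pi/(\e W^\e_r)^{m^*}$ is explicit. Here $\tilde G$ reduces to a single non-positive term; combining $(\pa_w\psi^\e)^{1-m} \leq C(W^\e_r)^{R(m-1)}$ from \eqref{eq:bound.along.strat} with the explicit formula for $\Phi(s, -\varPi^\e_{\xi,r})$ and $|\tilde\varpi_x|^m \leq C(1+|X^\e|^2)$ gives the required bound, provided $W^\e_r$ is bounded; the latter holds because on this interval $W^\e$ solves the deterministic linear ODE $dW^\e_r = (r - g(r)^{-1/R})W^\e_r\,dr$ starting from $W^\e_{\tau^\e + \eeta} \leq 1$ by \eqref{cond:exit}. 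The main technical obstacle throughout is matching the scaling exponents: on the first interval, the smallness $(\e W^\e)^{2m^*}$ from the Taylor expansion must exactly offset the growth $|\tilde\varpi_x|^m$ that reaches $C(\e W^\e)^{-2m^*}$ at the boundary of the stopping region, and analogous cancellations (governed by the identities $3mm^* = 1+2m^*$, $3m^* - 1/m = 2m^*/m$, and $-2m^*+(1-2m^*)/(m-1) = m^*(m-2)/(m-1)$) must be verified in every term on the second interval. The stopping rule \eqref{eq:def.tau.e} is precisely calibrated so that these cancellations are tight.
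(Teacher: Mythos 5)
Your proof is correct and follows essentially the same route as the paper's: a mean-value/Taylor estimate for the differences of powers of $\pa_w\psi^\e$ and $V^0_w$ via Lemma~\ref{lem:varpsie}, bounds of order $\big|\varPi^{\e,t,w,s,h}_{\xi,r}/S_r\big|^m$ on all the $\Phi$-type factors, and the growth estimate $|\tilde\varpi_x(x)|\leq C(1+|x|^{2/m})$ combined with the control of $X^{\e,t,w,s,h}$ on each of the three stochastic intervals. The only cosmetic differences are that you merge the two terms on $\llbracket t,\tau^{\e}\rrbracket$ via the exact Euler/duality identities instead of bounding each factor separately, and that the key estimate $|X^{\e,t,w,s,h}_r|^2\leq C(\e W^{\e,t,w,s,h}_r)^{-2m^*}$ is more accurately a consequence of the uniform boundedness of the portfolio weights $H^{\e,t,w,s,h,i}S^i/W^{\e,t,w,s,h}$ (itself enforced by the stopping rule, cf.\ \eqref{eq:bound.pi.epsi}) rather than of the $\tilde\varpi$-barrier in \eqref{eq:def.tau.e} directly.
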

\begin{proof}
First note that by Lemma~\ref{lem:varpsie}, and the fact that $g$ is bounded away from $0$ on $[0,T]$ we have for the chosen control $H^{\e,t,w,s,h}$ and $C^{\e,t,w,s,h}$ the following inequality on $\llbracket t,\tauet\rrbracket$
\begin{align*}
&\left|\partial_w\psi^\e\left(r,W^{\e,t,w,s,h}_r,S_r,H^{\e,t,w,s,h}_r\right)-\left(V^0_w(r,W^{\e,t,w,s,h}_r,S_r)\right)\right|\\
&\qquad\qquad\qquad\leq C (W^{\e,t,w,s,h}_r)^{-R} ((W^{\e,t,w,s,h}_r\e)^{2m^*}+(W^{\e,t,w,s,h}_r\e)^{\frac{1}{m}}),
\end{align*}
which yields, by Taylor's expansion and \eqref{eq:bound.along.strat}
\begin{align*}
	&\Big|\big(\partial_w\psi^\e\big)^{-m}-\big(V^0_w\big)^{-m}\Big|\big(r,W^{\e,t,w,s,h}_r,S_r,H^{\e,t,w,s,h}_r\big)\\
	&\qquad\qquad\qquad\leq C\big((W^{\e,t,w,s,h}_r)^{-R}\big)^{-1-m}|\partial_w\psi^\e-V^0_w|\\
	&\qquad\qquad\qquad\leq C\big(W^{\e,t,w,s,h}_r\big)^{Rm}((\e W^{\e,t,w,s,h}_r)^{2m^*}+(\e W^{\e,t,w,s,h}_r)^{\frac{1}{m}}),\\
	&\Big|\big(\partial_w\psi^\e\big)^{1-m}-\big(V^0_w\big)^{1-m}\Big|\big(r,W^{\e,t,w,s,h}_r,S_r,H^{\e,t,w,s,h}_r\big)\\
	&\qquad\qquad\qquad\leq C\big(W^{\e,t,w,s,h}_r\big)^{R(m-1)}((\e W^{\e,t,w,s,h}_r)^{2m^*}+(\e W^{\e,t,w,s,h}_r)^{\frac{1}{m}}).
\end{align*}
Seconds, by definition of $f$ in \eqref{eq:def.f.example} and $\Phi_x$ in \eqref{eq:Phix.example}, it holds for some constant $C>0$,
\begin{align*}
	\Phi\left(S_r,-\varPi^{\e,t,w,s,h}_{\xi,r}\right) \leq& C\Bigg|\frac{\varPi^{\e,t,w,s,h}_{\xi,r}}{S_r}\Bigg|^{m},\\
	0\leq\varPi^{\e,t,w,s,h}_{\xi,r}\cdot\Phi_x\left(S_r,-\varPi^{\e,t,w,s,h}_{\xi,r}\right) \leq& C\Bigg|\frac{\varPi^{\e,t,w,s,h}_{\xi,r}}{S_r}\Bigg|^{m},\\
	0\leq f\left(S_r,\Phi_x\left(S_r,-\varPi^{\e,t,w,s,h}_{\xi,r}\right)\right)\cdot \Phi_x\left(S_r,-\varPi^{\e,t,w,s,h}_{\xi,r}\right) \leq&C\Bigg|\frac{\varPi^{\e,t,w,s,h}_{\xi,r}}{S_r}\Bigg|^{m}.
\end{align*}
Finally, the definition of $\varPi^{\e,t,w,s,h}$ in \eqref{eq:def.varPi}, the estimate~\eqref{eq:lower.bound.varpi} on $\tilde \varpi_x$ and the fact that $\frac{H^{\e,t,w,s,h,i} S^i}{W^{\e,t,w,s,h}}$ is uniformly bounded for an admissible strategy and $1\leq i\leq d$, provides the bound
\begin{align*}
	\Bigg|\frac{\varPi^{\e,t,w,s,h}_{\xi,r}}{S_r}\Bigg| \leq  C(W^{\e,t,w,s,h}_r)^{-R+3m^*} \frac{1}{|  (\e W^{\e,t,w,s,h}_r)^{m^*}|^{\frac{2}{m}}}\leq C\e^{-\frac{2m^*}{m}} (W^{\e,t,w,s,h}_r)^{-R+\frac{1}{m}}.
\end{align*}
Now the result follows from combining the definition of $\tilde G^{\e,t,w,s,h}$ on the three stochastic intervals, the estimates we just stated, the fact that $g$ is bounded and bounded away from $0$ (see Remark~\ref{rem:g}), the definition of $f$, $\varPi^{\e,t,w,s,h}_\xi$, and $\tauet$ (in \eqref{eq:def.f.example}, \eqref{eq:def.varPi} and \eqref{eq:def.tau.e}), the fact that $\frac{H^{\e,t,w,s,h,i} S^i}{W^{\e,t,w,s,h}}$ is uniformly bounded for an admissible strategy and the inequalities \eqref{eq:lower.bound.varpi} and \eqref{eq:bound.along.strat}.§
\end{proof}

Denote $\mu^{\psi^\e}$  the drift of the diffusion $\Psi^{\e,t,w,s,h}_u=\psi^\e\left(u,  W^{\e,t,w,s,h}_u, S_u,  H^{\e,t,w,s,h}_u\right)$.
Note that in \eqref{eq:tildemuformu} we computed the drift of $\psi^\e$ applied to processes controlled by a different strategy. In \eqref{eq:tildemuformu} the controls appear in three different lines. Thus, 
\begin{align*}
	\mu^{\psi^\e}_z=&\tilde \mu^{\psi^\e}_z+\partial_w \psi^\e(z,W^{\e,t,w,s,h}_z,S_z,H^{\e,t,w,s,h}_z)\Big[c^\e(z,W^{\e,t,w,s,h}_z,S_z,H^{\e,t,w,s,h}_z)-C^{\e,t,w,s,h}_z \Big]\\&+\tilde G^{\e,t,w,s,h}_z.
\end{align*}
Here, the second term corresponds to the difference in consumption between the control used in \eqref{eq:tildemuformu} and \eqref{eq:consumption.cand}, the last term corresponds to the difference in strategy $\theta$ between \eqref{eq:tildemuformu} and \eqref{eq:trading.rate.cand}. Thus, using \eqref{eq:Ge.mu.psie} we obtain
\begin{align}
	d&\Psi^{\e,t,w,s,h}_z=-\left[\cG^\e(\psi^\e)(z,W^{\e,t,w,s,h}_z,S_z,H^{\e,t,w,s,h}_z)+U\left(c^\e\left(z,W^{\e,t,w,s,h}_z,S_z,H^{\e,t,w,s,h}_z\right)\right)\right]dz\notag\\
	&+\partial_w \psi^\e(z,W^{\e,t,w,s,h}_z,S_z,H^{\e,t,w,s,h}_z)\Big[c^\e(z,W^{\e,t,w,s,h}_z,S_z,H^{\e,t,w,s,h}_z)-C^{\e,t,w,s,h}_z \Big]dz\notag\\
	&+\tilde G^{\e,t,w,s,h}_zdz+dM_z, \mbox{ on } [t, T],\label{eq:Ge.psie}
\end{align}
where $M$ is a martingale. The first line corresponds to the generator computed in Section~\ref{s.remainder}, the second line and the  process $\tilde G^{\e,t,w,s,h}$ are the contributions due to the fact that consumption, respectively the candidate strategy $\theta^\e$, are not the maximizers of the Hamiltonian in \eqref{eq:PDE-veps}. 
 
\subsection{Local Boundedness of the Renormalized Loss of Utility}
\label{s.local.boundedness}
We also need to define $\tilde W^{0,t,w,s,h}$, the frictionless wealth process started at $(t,w,s)$ when the investor does not consume ($c\equiv0$ in the wealth dynamics) and follows $\theta^\e$. Let $\tilde \cL$ be the generator of the diffusion $(\cdot,\tilde W^{0,t,w,s,h}_\cdot,S_\cdot, \ems(h- h^0(\cdot, \tilde W^{0,t,w,s,h}_\cdot,S_\cdot)))$. We now provide the main decomposition for the renormalized loss of utility associated with the controls $H^{\e,t,w,s,h}$ and $C^{\e,t,w,s,h}$.
Before we proceed, let us define for a function $\psi^\e$ of the form defined in \eqref{eq:def.psie} and $\e>0$ the remainder functional 
\begin{align}
\label{eq:def.cR.epsi}
\cR^\e(\psi^\e)(t,w,s,h):=(m-1)\Phi(s,\varpi_\xi)(V^0_w)^{-m}\Big(\pa_w\psi^\e&-V^0_w\Big)\\
&-\tilde U'(V_w^0)\frac{\pa_w \psi^\e-V^0_w+\ets u_w}{\ets}.\notag
\end{align}

\begin{lemma}
\label{lem:dev-control}
For all $(t,w,s)\in \cD$, and $h\in\R^d$ it holds
\begin{align}\label{eq:mainbound1}
& \frac{V^0(t,w,s)-V^\e(t,w,s,h)}{\e^{2m*}}- u (t,w,s)\\
&{\leqslant}\E\Big[\int_{t}^{T}T(r,W^{\e,t,w,s,h}_r,S_r, u_w,u_{ww},u_{ws};H^{\e,t,w,s,h}_r)\notag\\
&\qquad\qquad\qquad\qquad-T(r,W^{\e,t,w,s,h}_r,S_r, u_w,u_{ww}, u_{ws};h^0(r,W^{\e,t,w,s,h}_r,S_r))dr\Big]\notag\\
&+\ets\bigg(\E\bigg[ \varpi\bigg(T,\tilde W^0_T,S_T, \frac{h-h^0(T,\tilde W^0_T,S_T)}{\es}\bigg)\bigg]\notag\\
&\qquad\qquad-\E\Big[\varpi\Big(T,W^{\e,t,w,s,h}_T,S_T,-\ems h^0(T, W^{\e,t,w,s,h}_T,S_T)\Big)\Big]\bigg)\notag\\
&+\E\bigg[\int_{t}^{T} \Big(\cR^\e(r,W^{\e,t,w,s,h}_r,S_r,H^{\e,t,w,s,h}_r)-\emts\tilde  G^{\e,t,w,s,h}_rdr\Big)\bigg]\notag\\
&-\emts\E\bigg[\int_{t}^{T}\partial_w \psi^\e(r,W^{\e,t,w,s,h}_r,S_r,H^{\e,t,w,s,h}_r)\notag\\
&\qquad\qquad\qquad\qquad\qquad\qquad\qquad\qquad\times\Big(c^\e(r,W^{\e,t,w,s,h}_r,S_r,H^{\e,t,w,s,h}_r)-C^{\e,t,w,s,h}_r \Big)dr\bigg]\notag\\
&+\emts\E\bigg[\int_{t}^{T}\Big(U\Big(c^\e(r,W^{\e,t,w,s,h}_r,S_r,H^{\e,t,w,s,h}_r)\Big)-U(C^{\e,t,w,s,h}_r)\Big)dr\bigg].\notag
\end{align}
\end{lemma}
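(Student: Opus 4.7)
The plan is to bound $(V^0(t,w,s)-V^\e(t,w,s,h))/\ets-u(t,w,s)$ from above by comparing $V^\e$ to the performance $J(t,w,s,h):=\E\bigl[\int_t^T U(C^{\e,t,w,s,h}_r)\,dr+U(C^{\e,t,w,s,h}_T)\bigr]$ of the admissible candidate $(C^{\e,t,w,s,h},H^{\e,t,w,s,h})$ of Section~\ref{s.def.candidate}; admissibility (Lemma~\ref{lem:probexit}) yields $V^\e\geqslant J$ and reduces the task to manipulating $(V^0-\ets u-J)/\ets$. I then apply It\^o's formula to $\Psi^{\e,t,w,s,h}_r$ using the semi-martingale decomposition~\eqref{eq:Ge.psie} developed in Section~\ref{sec:study.generator}. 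Taking expectation (the true-martingale property follows from the bounds of Section~\ref{s.local.boundedness}, in particular Lemma~\ref{lem:conttidleG}) and using the terminal values $H^{\e,t,w,s,h}_T=0$, $u(T,\cdot)=0$, $V^0(T,w,s)=U(w)$, together with the identity $V^0-\ets u=\psi^\e+\e^{4m^*}\varpi(\cdot,\cdot,\cdot,\ems(h-h^0))$, produces an expression for $(V^0-\ets u-J)/\ets$ that already contains terms~$4$ and~$5$ of~\eqref{eq:mainbound1} from the consumption-difference pieces.

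Two further transformations then yield the remaining terms of~\eqref{eq:mainbound1}. First, the initial-time value $\varpi(t,w,s,\ems(h-h^0(t,w,s)))$ is rewritten via It\^o's formula applied to $Y_r:=\varpi(r,\tilde W^{0,t,w,s,h}_r,S_r,\ems(h-h^0(r,\tilde W^{0,t,w,s,h}_r,S_r)))$, producing $\E[Y_T]-\E\int_t^T\tilde\cL Y_r\,dr$; the boundary piece, multiplied by $\ets$, supplies term~$2$. Second, Proposition~\ref{prop:remainder.estimate}(i), applied with $\nu=u$ and $\chi=\varpi$, simplifies $\cG^\e(\psi^\e)$: since $u_h=0$ the $\Phi$-difference term in~\eqref{eq:remainder.viscosity} vanishes, while $E_1(t,w,s,\xi,\varpi_\xi,\varpi_{\xi\xi})=a(t,w,s)$ from the first corrector equation~\eqref{1st-corrector-eq} and $E_2(t,w,s,u_t,u_w,u_s,u_{ww},u_{ws},u_{ss})=-a(t,w,s)$ from the second corrector equation~\eqref{2nd-corrector-eq} cancel exactly, leaving $\cG^\e(\psi^\e)=R^\e$. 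Extracting from $R^\e$ the $\ets[T(\cdot;H^{\e,t,w,s,h}_r)-T(\cdot;h^0)]$ contribution, as in the decomposition in the proof of Proposition~\ref{prop:remainder.estimate}, supplies term~$1$.

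The main obstacle is the algebraic verification that the leftover combination ``$\emts[I^{\e,\tilde U}+\ets(I^{\e,1}+I^{\e,2}+I^{\e,3}+I^{\e,4})]-\ets\,\tilde\cL Y_r$'' collapses precisely to the integrand $\cR^\e$ of~\eqref{eq:def.cR.epsi}. To leading order the match is transparent: a first-order Taylor expansion of $x\mapsto x^{1-m}$ shows $I^{\e,2}\approx(m-1)(V^0_w)^{-m}(\pa_w\psi^\e-V^0_w)\Phi(s,\varpi_\xi)$ (using that $\Phi_x$ is odd, hence $\Phi$ is even), and a first-order expansion of $\tilde U$ around $V^0_w$ shows $\emts I^{\e,\tilde U}\approx-\tilde U'(V^0_w)(\pa_w\psi^\e-V^0_w+\ets u_w)/\ets$, producing the two summands of $\cR^\e$. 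The delicate part is that the remaining pieces $I^{\e,1},I^{\e,3},I^{\e,4}$ together with the higher-order Taylor remainders in $I^{\e,2}$ and $I^{\e,\tilde U}$ must cancel the $\tilde\cL Y_r$ contribution; this relies on rewriting $\frac{1}{2}\text{Tr}(c^{h^0}\varpi_{\xi\xi})-(V^0_w)^{1-m}\Phi(s,-\varpi_\xi)=-a+\frac{V^0_{ww}}{2}|\sum_j\xi_j s_j\sigma^j|^2$ via the first corrector equation, and on carefully matching the It\^o corrections produced by the composite $(r,\tilde W^0_r,S_r)\mapsto h^0$ dependence in the $\xi$-argument of $Y$ against the chain-rule terms hidden inside $I^{\e,1},I^{\e,3},I^{\e,4}$. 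A secondary technical item is the justification of the true-martingale property under the stopping rule~\eqref{eq:def.tau.e}, handled by the moment bounds of Lemmas~\ref{lem:conttidleG}, \ref{lem:bound.T.Re}, \ref{lem:contcons}, and~\ref{lem:moment.W0}.
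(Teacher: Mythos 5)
Your overall architecture coincides with the paper's: compare $V^\e$ with the performance of the candidate control, apply the decomposition \eqref{eq:Ge.psie} to $\Psi^{\e,t,w,s,h}$, use $H^{\e,t,w,s,h}_T=0$ and the terminal conditions to produce the consumption and terminal-$\varpi$ terms, convert the initial value $\varpi(t,w,s,\ems(h-h^0(t,w,s)))$ into the $\tilde W^0$-boundary term via Dynkin's formula for $\tilde\cL$, and reduce $\cG^\e(\psi^\e)$ to $R^\e$ through Proposition~\ref{prop:remainder.estimate} with $\nu=u$, $\chi=\varpi$ and the cancellation $E_1+E_2=0$. All of that matches the paper.

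The gap is in your final step. You assert that $\emts\bigl[I^{\e,\tilde U}+\ets(I^{\e,1}+I^{\e,2}+I^{\e,3}+I^{\e,4})\bigr]-\ets\,\tilde\cL\varpi$ ``collapses precisely to'' $\cR^\e$, with the higher-order Taylor remainders of $I^{\e,\tilde U}$ and $I^{\e,2}$ cancelling the $\tilde\cL$ contribution. No such identity holds, and attempting to verify it would fail. What is true, and what the paper uses, is the exact identity $I^{\e,1}+I^{\e,3}+I^{\e,4}=\ets\tilde\cL(\varpi)-\tfrac12\mathrm{Tr}(c^{h^0}\varpi_{\xi\xi})$ (this is where the It\^o/chain-rule matching you describe lives), after which the leftover equals $\emts I^{\e,\tilde U}+I^{\e,2}-\tfrac12\mathrm{Tr}(c^{h^0}\varpi_{\xi\xi})$, and this is bounded \emph{above} by $\cR^\e$ through three separate one-sided convexity arguments: convexity of $x\mapsto x^{1-m}$ gives $I^{\e,2}\leq(m-1)\Phi(s,\varpi_\xi)(V^0_w)^{-m}(\pa_w\psi^\e-V^0_w)$; convexity of $\tilde U$ gives $\emts I^{\e,\tilde U}\leq-\tilde U'(V^0_w)(\pa_w\psi^\e-V^0_w+\ets u_w)/\ets$; and convexity of $\xi\mapsto\varpi(t,w,s,\xi)$ together with $c^{h^0}\succeq0$ lets one simply discard $-\tfrac12\mathrm{Tr}(c^{h^0}\varpi_{\xi\xi})\leq0$. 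Since the lemma is an inequality, nothing needs to cancel exactly — the Taylor remainders have a definite sign and are dropped, not matched against $\tilde\cL\varpi$. Relatedly, your invocation of the first corrector equation carries a sign slip: it gives $\tfrac12\mathrm{Tr}(c^{h^0}\varpi_{\xi\xi})-(V^0_w)^{1-m}\Phi(s,-\varpi_\xi)=a+\tfrac{V^0_{ww}}{2}\bigl|\sum_j\xi_js_j\sigma^j\bigr|^2$, not $-a+\cdots$; in any case that identity is not needed once $E_1+E_2=0$ has already been used to kill the corrector terms.
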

\begin{proof}
Thanks to \eqref{eq:Ge.psie} and the fact that $H^{\e,t,w,s,h}_T=0$ we have
\begin{align*}
&\psi^\e(t,w,s,h)=\E\left[\Psi^{\e,t,w,s,h}_T\right]+\E\bigg[\int_{t}^{T} \Big(\cG^\e(\psi^\e)(z,W^{\e,t,w,s,h}_z,S_z,H^{\e,t,w,s,h}_z)-\tilde G^{\e,t,w,s,h}_z\\
&\quad-\partial_w \psi^\e(z,W^{\e,t,w,s,h}_z,S_z,H^{\e,t,w,s,h}_z)\Big[c^\e(z,W^{\e,t,w,s,h}_z,S_z,H^{\e,t,w,s,h}_z)-C^{\e,t,w,s,h}_z\Big]\\
&\quad+U(c^\e(z,W^{\e,t,w,s,h}_z,S_z,H^{\e,t,w,s,h}_z))\Big)dz\bigg].
\end{align*} 
By definition of $\psi^\e$ and the boundary conditions of $V^0$ and $u$ (see Equations~\eqref{eq:final.cond.V0} and \eqref{1st-corrector-eq}) it holds
\begin{align*}
&\E\left[\Psi^{\e,t,w,s,h}_T\right] = \E\left[V^0(T,W^{\e,t,w,s,h}_T,S_T)\right]-\ets\E\left[u(T,W^{\e,t,w,s,h}_T,S_T)\right]\\
&\qquad\qquad-\efs\E\left[\varpi\left(T,W^{\e,t,w,s,h}_T,S_T,\frac{H^{\e,t,w,s,h}_T-h^0\big(T,W^{\e,t,w,s,h}_T,S_T\big)}{\es}\right)\right]\\
&\qquad=\E\left[U\big(W^{\e,t,w,s,h}_T\big)\right]-\efs\E\left[\varpi\Big(T,W^{\e,t,w,s,h}_T,S_T,-\ems h^0\big(T,W^{\e,t,w,s,h}_T,S_T\big)\Big)\right].
\end{align*}
{Note that the condition $H^{\e,t,w,s,h}_T=0$ implies that the position in cash at final time is indeed $W^{\e,t,w,s,h}_T$.
Thus, given the admissibility of the strategy and the terminal condition~\eqref{eq:final.cond.Ve} for $V^\e$, we have $\E\left[U\big(W^{\e,t,w,s,h}_T\big)+\int_t^TU(C^{\e,t,w,s,h}_z)dz\right]\leq V^\e(t,w,s,h)$  and obtain}
\begin{align*}
\psi^\e&(t,w,s,h){\leqslant} \E\left[\int_{t}^{T}\left(\cG^\e(\psi^\e)(z,W^{\e,t,w,s,h}_z,S_z,H^{\e,t,w,s,h}_z)-\tilde G^{\e,t,w,s,h}_z\right)dz\right]\\
&+\E\bigg[\int_{t}^{T}\bigg(-\partial_w \psi^\e(z,W^{\e,t,w,s,h}_z,S_z,H^{\e,t,w,s,h}_z)\Big[c^\e(z,W^{\e,t,w,s,h}_z,S_z,H^{\e,t,w,s,h}_z)-C^{\e,t,w,s,h}_z\Big]\\
&+U(c^\e(z,W^{\e,t,w,s,h}_z,S_z,H^{\e,t,w,s,h}_z))-U(C^{\e,t,w,s,h}_z)\bigg)dz\bigg]\\
&+V^\e(t,w,s,h)-\efs\E\left[\varpi\left(T,W^{\e,t,w,s,h}_T,S_T,-\ems h^0\left(T,W^{\e,t,w,s,h}_T,S_T\right)\right)\right],
\end{align*}
which implies 
\begin{align}
&\frac{V^0(t,w,s)-V^\e(t,w,s,h)}{\e^{2m*}}{\leq}  \frac{1}{\ets}\E\left[\int_t^{T} \left(\cG^\e(\psi^\e)(z,W^{\e,t,w,s,h}_z,S_z,H^{\e,t,w,s,h}_z)-\tilde G^{\e,t,w,s,h}_z\right)dr\right]\notag\\
&+u (t,w,s)+\e^{2m^*}\varpi\left(t,w,s,\frac{h-h^0(t,w,s)}{\e^{m^*}}\right)\notag\\
&+\emts\E\bigg[\int_{t}^{T}-\partial_w \psi^\e(z,W^{\e,t,w,s,h}_z,S_z,H^{\e,t,w,s,h}_z)\Big[c^\e(z,W^{\e,t,w,s,h}_z,S_z,H^{\e,t,w,s,h}_z)-C^{\e,t,w,s,h}_z\Big]\notag\\
&+U(c^\e(z,W^{\e,t,w,s,h}_z,S_z,H^{\e,t,w,s,h}_z))-U(C^{\e,t,w,s,h}_z)dz\bigg]\notag\\
&-\ets\E\left[\varpi\left(T,W^{\e,t,w,s,h}_T,S_T,-\ems h^0\left(T,W^{\e,t,w,s,h}_T,S_T\right)\right)\right],\label{eq:cont-dev1}
\end{align}
where the left-hand side is non-negative by construction.

We note that the remainder estimates of Section~\ref{s.remainder} for this choice of $\psi^\e$ give
\begin{align*}
&\emts\cG^\e(\psi^\e)(t,w,s,h)=T(t,w,s, u_w,u_{ww},u_{ws};h)-T(t,w,s, u_w,u_{ww}, u_{ws};h^0(t,w,s))\\
&+\ets \tilde \cL\left(\varpi \right)-\frac{1}{2}\text{Tr}\left(c^{h^0}\varpi_{\xi\xi}\right)+\left((V^0_w)^{1-m}-(\pa_w\psi^\e)^{1-m}\right)\Phi(s,-\varpi_\xi)\\
&+\frac{\tilde U(V^0_w)-\tilde U(\pa_w\psi^\e)-\ets\tilde U'(V^0_w)u_w}{\ets}.
\end{align*}
This follows indeed from \eqref{eq:Ge.mu.psie} and \eqref{eq:psi.e.expnsion}, the fact that for the solutions $u$ and $\varpi$ of the corrector equations \eqref{1st-corrector-eq} and \eqref{2nd-corrector-eq}, it holds 
\begin{equation*}
E_1(t,w,s,\xi,\varpi_\xi,\varpi_{\xi\xi})+E_2(t,w,s,u,u_t, u_w, u_s,u_{ww},u_{ws},u_{ss})=0,
\end{equation*}
and the equation (obtained by direct computation from \eqref{eq:trace.D2.h0} and the definitions \eqref{eq:def.I1}, \eqref{eq:def.I3} and \eqref{eq:def.I4})
\begin{equation*}
	I^{\e,1}+I^{\e,3}+I^{\e,4}=\ets\tilde\cL(\varpi)-\frac{1}{2}\text{Tr}\big(c^{h^0}\varpi_{\xi\xi}\big).
\end{equation*}

Recall that $\Phi(s,\cdot)$ is even for the choice made for $f$ in \eqref{eq:def.f.example}. Due to the convexity of $x\to x^{1-m}$ and $\tilde U$ the following two inequalities hold for all $(t,w,s,h)$,
\begin{align*}
\left((V^0_w)^{1-m}-(\pa_w\psi^\e)^{1-m}\right)\Phi(s,-\varpi_\xi){\leq} (m-1)\Phi(s,\varpi_\xi)(V^0_w)^{-m}\left(\pa_w\psi^\e-V^0_w\right),\\
\frac{\tilde U(V^0_w)-\tilde U(\pa_w\psi^\e)-\ets\tilde U'(V^0_w)u_w}{\ets}{\leq} -\tilde U'(V_w^0)\frac{\pa_w \psi^\e-V^0_w+\ets u_w}{\ets}.
\end{align*}
Hence, by the convexity of $\xi \mapsto \varpi(t,w,s,\xi)$ and the fact that $c^{h^0}$ is non-negative (the trace of the product of symmetric non-negative matrices is non-negative) we obtain 
\begin{align*}
&\emts\cG^\e(\psi^\e)(t,w,s,h){\leq} T(t,w,s, u_w,u_{ww},u_{ws};h)-T(t,w,s, u_w,u_{ww}, u_{ws};h^0(t,w,s))\\
&+\ets\tilde \cL\left(\varpi \right)+(m-1)\Phi(s,\varpi_\xi)(V^0_w)^{-m}\left(\pa_w\psi^\e-V^0_w\right)-\tilde U'(V_w^0)\frac{\pa_w \psi^\e-V^0_w+\ets u_w}{\ets}.
\end{align*}
With the definition of $\cR^\e$ in \eqref{eq:def.cR.epsi}, we obtain 
\begin{align*}
&\frac{1}{\ets}\E\left[\int_t^{T} \cG^\e(\psi^\e)(W^{\e,t,w,s,h}_r,S_r,H^{\e,t,w,s,h}_r)dr\right]\\
&{\leq}\E\left[\int_t^{T} (T(\cdot;H^{\e,t,w,s,h}_r)-T(\cdot;h^0(r,W^{\e,t,w,s,h}_r,S_r))(r,W^{\e,t,w,s,h}_r,S_t, u_w,u_{ww},u_{ws})dr\right]\\
&+\E\left[\ets \varpi\left(T,\tilde W^0_T,S_T,\ems (h-h^0(T,\tilde W^0_T,S_T))\right)\right]-\ets \varpi\left(t,w,s,\ems (h-h^0(t,w,s))\right)\\
&+ \E\left[\int_t^{T}\cR^\e(r,W^\e_r,S_r,H^\e_r)dr\right].
\end{align*}
Combining this inequality with \eqref{eq:cont-dev1} we conclude the proof. 
\end{proof}
The following lemmas allow us to locally bound the renormalized loss of utility.

\begin{lemma}
	\label{lem:bound.T.Re}
		Let $u$ be the function defined in \eqref{eq:def.u.example} and $T$ defined in \eqref{eq:defT}. Then,  for $(t,w,s,h)\in\cA$ (defined in \eqref{eq:admissible.states}) we have
	\begin{align*}
	\bigg|&T(t,w,s, u_w, u_{ww},u_{ws};h)-T(t,w,s, u_w, u_{ww},u_{ws};h^0(t,w,s))+\cR^\e(t,w,s,h)|\\
		& \leqslant C\left(1+w^{1+2m^*-R}+w^{1+\frac{1}{m}-R}\right) \leq C\left(1+w^{k}\right)
			\end{align*}
	holds for some $k>0$.
\end{lemma}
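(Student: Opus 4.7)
I would split the quantity $T(\ldots;h)-T(\ldots;h^0)+\cR^\e$ into three pieces and control each using the explicit form \eqref{eq:def.u.example} of $u$, the factorisation \eqref{eq:factorisation} of $\varpi$, and the uniform estimates in Lemma~\ref{lem:varpsie}.

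First, since $u(t,w,s)=\lambda \bar g(t) w^{3mm^*-R}$ is independent of $s$, one has $u_s=u_{ws}=0$, and $|u_w|+w|u_{ww}|\leq C w^{2m^*-R}$ where $\bar g$ is bounded on $[0,T]$ by Remark~\ref{rem:barg.bounded}. By \eqref{eq:defT}, $T(\ldots;h)=u_w(h\times s)\cdot(\mu-r\one)+\tfrac{1}{2}u_{ww}(h\times s)^\top\sigma\sigma^\top(h\times s)$; since on $\cA$ each coordinate of $h\times s$ lies in $(0,w)$, $|T(\ldots;h)|\leq C(|u_w|w+|u_{ww}|w^2)\leq Cw^{1+2m^*-R}$. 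The same bound holds for $T(\ldots;h^0)$, which handles the $T$-difference and yields the first $w$-power term in the target.

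Second, for the piece $-\tilde U'(V_w^0)[\partial_w\psi^\e-V_w^0+\varepsilon^{2m^*}u_w]/\varepsilon^{2m^*}$ of $\cR^\e$, the first inequality in \eqref{eq:boundedpawpsi} combined with $V_w^0=g(t)w^{-R}$ and $\tilde U'(V_w^0)=-g(t)^{-1/R}w$ from \eqref{eq:def.Utilde} gives a bound by $c_W g(t)^{1-1/R}w^{1+1/m-R}\varepsilon^{1/m-2m^*}$. The exponent $1/m-2m^*=(m-2)/(m(3m-2))$ is strictly positive since $m>2$, so for $\varepsilon\leq 1$ this piece is at most $Cw^{1+1/m-R}$, producing the second $w$-power term.

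Third, the remaining piece is $(m-1)\Phi(s,\varpi_\xi)(V_w^0)^{-m}(\partial_w\psi^\e-V_w^0)$. Differentiating \eqref{eq:factorisation} in $\xi$ gives $\varpi_\xi=g(t)w^{-R+3m^*}(s\times \s\tilde\varpi_x(X))$ with $X=\s(h\times s/w-\pi)/(w\varepsilon)^{m^*}$; together with \eqref{eq:Phix.example} this yields $\Phi(s,\varpi_\xi)=(m\kappa^{m-1})^{-1}(g(t)w^{-R+3m^*})^m\sum_j|\tilde\varpi_{x_j}(X)|^m$. On $\cA$ the vector $h\times s/w-\pi$ is uniformly bounded, so $|X|\leq C(w\varepsilon)^{-m^*}$. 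The estimate $|\tilde\varpi_{x_j}(X)|\leq C|X|^{2/m}$, valid for all $X\in\R^d$ by \eqref{eq:growth.condition.varpi2} and the observation that $|X|\leq |X|^{2/m}$ for $|X|\leq 1$ (since $2/m<1$), yields $\sum_j|\tilde\varpi_{x_j}(X)|^m\leq C|X|^2\leq C(w\varepsilon)^{-2m^*}$ uniformly; using the identity $3mm^*-2m^*=1$, one gets $\Phi(s,\varpi_\xi)\leq Cg(t)^m w^{1-Rm}\varepsilon^{-2m^*}$. Multiplying by $(V_w^0)^{-m}=g(t)^{-m}w^{Rm}$ and by the second estimate of \eqref{eq:boundedpawpsi}, the powers of $w$ and $g(t)$ combine to $g(t)w^{1-R}$ and the $\varepsilon$-factors become $\varepsilon^{-2m^*}[(w\varepsilon)^{2m^*}+(w\varepsilon)^{1/m}]=w^{2m^*}+w^{1/m}\varepsilon^{1/m-2m^*}\leq w^{2m^*}+w^{1/m}$, so the third piece is controlled by $C(w^{1+2m^*-R}+w^{1+1/m-R})$.

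Summing the three contributions gives the announced bound, after absorbing multiplicative constants and adjoining a $+1$ to cover the trivial small-$w$ regime. The only subtle step — and the main obstacle — is the uniform control of $\Phi(s,\varpi_\xi)$: one must use the loose but globally valid estimate $|\tilde\varpi_x(X)|^m\leq C|X|^2$, since a regime-dependent bound (tight for $|X|\leq 1$) would introduce a spurious $(w\varepsilon)^{4m^*}$ term incompatible with the target.
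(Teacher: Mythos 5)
Your proposal is correct and follows essentially the same route as the paper: the paper likewise bounds $T$ via the admissibility bounds on $h\times s/w$ and the explicit form of $u$, and controls $\cR^\e$ through the three intermediate estimates $|\Phi(s,\varpi_\xi)|\leq C\e^{-2m^*}w^{1-Rm}$, $|(V^0_w)^{-m}|\leq Cw^{Rm}$ and $|\pa_w\psi^\e-V^0_w|\leq C((w\e)^{2m^*}+(w\e)^{1/m})w^{-R}$ from Lemma~\ref{lem:varpsie}, which are exactly the ones you derive (the paper merely refers to the proof of Lemma~\ref{lem:conttidleG} for them). Your write-up is more detailed, in particular in making explicit the identity $(3m-2)m^*=1$ and the globally valid estimate $|\tilde\varpi_x(X)|^m\leq C|X|^2$, but there is no substantive difference.
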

\begin{proof}
Note that thanks to the admissibility of the strategies
we have that $\frac{h\times s}{w}-\frac{h^{0}(t,w,s)\times s}{w}$ is uniformly bounded. 
Thus $|T(t,w,s, u_w, u_{ww},u_{ws};h)|\leq C w^{1-R+2m^*}$ for admissible strategies. 
Similarly, to the proof of Lemma~\ref{lem:conttidleG} we have
\begin{align*}
\left|\Phi\left(s,\varpi_\xi\left(t,w,s,\frac{h-h^0(t,w,s)}{\ems}\right)\right)\right|&\leq C\emts w^{1-Rm}\\
|(V^0_w)^{-m}|&\leq C w^{Rm}\\
|\pa_w\psi^\e-V^0_w|&\leq C((w\e)^{2m^*}+(w\e)^{\frac{1}{m}})w^{-R}\\
\Big|\tilde U'(V_w^0)\frac{\pa_w \psi^\e-V^0_w+\ets u_w}{\ets}\Big|&\leq C \e^{\frac{1}{m}}w^{1-R+\frac{1}{m}}.
\end{align*}
Thus,
\begin{align*}
&|\cR^\e(\psi^\e)(t,w,s,h)|\leq C\left(1+w^{1+2m^*-R}+w^{1+\frac{1}{m}-R}\right).
\end{align*}
\end{proof}

\begin{lemma}\label{lem:contcons}
Define the process 
\begin{align}
K_r:=&-\partial_w \psi^\e(r,W^{\e,t,w,s,h}_r,S_r,H^{\e,t,w,s,h}_r)\Big[c^\e(r,W^{\e,t,w,s,h}_r,S_r,H^{\e,t,w,s,h}_r)-C^{\e,t,w,s,h}_r \Big]\notag\\
&\quad+U(c^\e(r,W^{\e,t,w,s,h}_r,S_r,H^{\e,t,w,s,h}_r))-U(C^{\e,t,w,s,h}_r)\mbox{ for }r\in[t,T]. 
\end{align}
Then, the admissibility of the strategy implies that 
\begin{align*}
|K_r|\leq &C(W^{\e,t,w,s,h}_r\e)^{2m^*}|W^{\e,t,w,s,h}_r|^{1-R}\mbox{ on } \llbracket t, \tau^{\e,t,w,s,h}\rrbracket,\\
|K_r|\leq &C(W^{\e,t,w,s,h}_r)^{1-R}\mbox{ on } \llbracket \tau^{\e,t,w,s,h},T\rrbracket.
\end{align*}

\end{lemma}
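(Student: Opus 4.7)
The plan is to exploit Fenchel duality between $U$ and $\tilde U$ to rewrite $K_r$ in a form amenable to Taylor expansion. Since $c^\e(r,\cdot)=-\tilde U'(\partial_w\psi^\e(r,\cdot))$ realises the supremum $\sup_c\{U(c)-p c\}$ at $p=\partial_w\psi^\e$, the pointwise identity $U(c^\e)-\partial_w\psi^\e\,c^\e=\tilde U(\partial_w\psi^\e)$ holds. Substituting this into the definition of $K_r$ yields
$$K_r=\tilde U(\partial_w\psi^\e)+\partial_w\psi^\e\,C^{\e,t,w,s,h}_r-U(C^{\e,t,w,s,h}_r),$$
which is the identity to be analysed on each sub-interval.

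On $\llbracket t,\tauet\rrbracket$, the consumption is precisely $C^{\e,t,w,s,h}_r=c^0(r,W^{\e,t,w,s,h}_r,S_r)=-\tilde U'(V^0_w)$, so the analogous identity $U(c^0)-V^0_w\,c^0=\tilde U(V^0_w)$ collapses the expression to the second-order Taylor remainder of $\tilde U$ at $V^0_w$,
$$K_r=\tilde U(\partial_w\psi^\e)-\tilde U(V^0_w)-\bigl(\partial_w\psi^\e-V^0_w\bigr)\tilde U'(V^0_w),$$
and hence $|K_r|\leq \tfrac12\sup|\tilde U''|\,(\partial_w\psi^\e-V^0_w)^2$ on the segment joining $V^0_w$ and $\partial_w\psi^\e$. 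Three ingredients close the bound: the explicit form $\tilde U''(y)=\frac{1}{R}y^{-\frac{1}{R}-1}$; the two-sided bracket~\eqref{eq:bound.along.strat}, which pins $\partial_w\psi^\e$ to a constant multiple of $V^0_w=g(r)(W^{\e,t,w,s,h}_r)^{-R}$ and thus gives $\sup|\tilde U''|\leq C(W^{\e,t,w,s,h}_r)^{1+R}$; and Lemma~\ref{lem:varpsie} which bounds $|\partial_w\psi^\e-V^0_w|\leq C((W^{\e,t,w,s,h}_r\e)^{2m^*}+(W^{\e,t,w,s,h}_r\e)^{1/m})(W^{\e,t,w,s,h}_r)^{-R}$. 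Since the stopping rule~\eqref{eq:def.tau.e} enforces $W^{\e,t,w,s,h}_r\e\leq 1$ and $2m^*<1/m$ for $m>2$, the $1/m$-power is dominated by the $2m^*$-power; squaring and using $(W^{\e,t,w,s,h}_r\e)^{4m^*}\leq(W^{\e,t,w,s,h}_r\e)^{2m^*}$ produces the claimed first bound.

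On $\llbracket\tauet,T\rrbracket$, the Fenchel identity still holds but $C^{\e,t,w,s,h}_r$ is no longer aligned with $\partial_w\psi^\e$ so the above cancellation is lost; the plan is to bound each term of the rewritten $K_r$ separately. By Lemma~\ref{lem:varpsie}, $\partial_w\psi^\e$ is of order $(W^{\e,t,w,s,h}_r)^{-R}$, whence $c^\e=-\tilde U'(\partial_w\psi^\e)$ is of order $W^{\e,t,w,s,h}_r$, and the power-utility form $U(x)=x^{1-R}/(1-R)$ with $R\in(0,1)$ together with the sublinearity of $x\mapsto x^{1-R}$ gives $|\tilde U(\partial_w\psi^\e)|, |U(c^\e)|, |\partial_w\psi^\e c^\e|\leq C(W^{\e,t,w,s,h}_r)^{1-R}$. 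The main obstacle is the short liquidation window $\llbracket\tauet,\tauet+\ets\rrbracket$, where $C^{\e,t,w,s,h}_r$ is defined only implicitly through the sandwich bounds stated after \eqref{cond:exit}; the plan there is to use the two-sided wealth control $\tfrac{\pi^*}{2}W^0_{\tauet}\leq W^{\e,t,w,s,h}_r\leq 2W^0_r$ which keeps $C^{\e,t,w,s,h}_r$ polynomially bounded in $W^{\e,t,w,s,h}_r$, after which the sublinearity of $U$ closes the bound with an $\e$-independent constant on $\llbracket\tauet,T\rrbracket$.
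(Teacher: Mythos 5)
Your argument is correct and reaches the stated bounds, but on the interval $\llbracket t,\tau^{\e,t,w,s,h}\rrbracket$ it takes a genuinely different (and cleaner) route than the paper. The paper writes $c^\e-C^{\e,t,w,s,h}=(\pa_w\psi^\e)^{-1/R}-(V^0_w)^{-1/R}$ and $U(c^\e)-U(C^{\e,t,w,s,h})\propto(\pa_w\psi^\e)^{-(1-R)/R}-(V^0_w)^{-(1-R)/R}$, bounds each difference of negative powers by a first-order Taylor/Lipschitz estimate combined with Lemma~\ref{lem:varpsie}, and so obtains a bound \emph{linear} in $((\e W^\e)^{2m^*}+(\e W^\e)^{1/m})$. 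You instead use the Fenchel identities $U(c^\e)-\pa_w\psi^\e c^\e=\tilde U(\pa_w\psi^\e)$ and $U(c^0)-V^0_w c^0=\tilde U(V^0_w)$ to collapse $K_r$ into the exact second-order Taylor remainder $\tilde U(\pa_w\psi^\e)-\tilde U(V^0_w)-(\pa_w\psi^\e-V^0_w)\tilde U'(V^0_w)$, which is \emph{quadratic} in the deviation; after invoking $\e W^\e\leq1$ (from the stopping rule) and $2m^*\leq 1/m$ for $m>2$, both estimates reduce to the same $C(\e W^\e)^{2m^*}(W^\e)^{1-R}$, so nothing is lost and your cancellation of the first-order term is actually sharper. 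On $\llbracket\tau^{\e,t,w,s,h},T\rrbracket$ your term-by-term bound via $\pa_w\psi^\e\asymp(W^\e)^{-R}$ coincides with the paper's. Note that the only soft spot in your write-up --- the control of the implicitly defined consumption on the liquidation window $\llbracket\tau^{\e,t,w,s,h},\tau^{\e,t,w,s,h}+\e^{2m^*}\rrbracket$ --- is a soft spot in the paper's proof as well (the paper silently applies the $c^0$-based bound there and additionally uses only $U(C^{\e,t,w,s,h}_r)\geq0$, which gives a one-sided rather than an absolute-value bound); you at least flag it and propose the wealth sandwich to close it, so this does not count against you relative to the published argument.
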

\begin{proof}
First, on $\llbracket t,\tau^{\e,t,w,s,h}\rrbracket$ and $\llbracket \tau^{\e,t,w,s,h}+\ets, T\rrbracket$, it holds $$C^{\e,t,w,s,h}_r:=c^0(r,W^{\e,t,w,s,h}_r,S_r)=-\tilde U'(V^0_w(t,w,s)).$$ We also have for $R<1$, $U(C^{\e,t,w,s,h}_r)\geq 0$. 
Thus, using \eqref{eq:bound.along.strat}, the definition of $c^\e$ in \eqref{eq:def.c.e} and of $\tilde U$ in \eqref{eq:def.Utilde} and the fact that $g$ is bounded and bounded away from $0$, the following inequality holds on $\llbracket\tau^\e,T\rrbracket$
\begin{align*}
|K_r|\leq&|\partial_w \psi^\e(r,W^{\e,t,w,s,h}_r,S_r,H^{\e,t,w,s,h}_r)|\Big(|\tilde U'\big(\partial_w\psi^\e(r,W^{\e,t,w,s,h}_r,S_r,H^{\e,t,w,s,h}_r)\big)|\notag\\
&\quad+|\tilde U'(V^0_w(t,w,s))|\Big)+U(c^\e(r,W^{\e,t,w,s,h}_r,S_r,H^{\e,t,w,s,h}_r))\leq C(W^{\e,t,w,s,h}_r)^{1-R},
\end{align*}
for some $C>0$. Now using Lemma~\ref{lem:varpsie} similarly to the proof of Lemma~\ref{lem:conttidleG} we have the inequalities
\begin{align*}
&\Big|\big(\partial_w\psi^\e\big)^{-\frac{1}{R}}-\big(V^0_w\big)^{-\frac{1}{R}}\Big|\big(r,W^{\e,t,w,s,h}_r,S_r,H^{\e,t,w,s,h}_r\big)\\
&\qquad\qquad\qquad\leq C\big(W^{\e,t,w,s,h}_r\big)((\e W^{\e,t,w,s,h}_r)^{2m^*}+(\e W^{\e,t,w,s,h}_r)^{\frac{1}{m}}),\\
&\Big|\big(\partial_w\psi^\e\big)^{-\frac{1-R}{R}}-\big(V^0_w\big)^{-\frac{1-R}{R}}\Big|\big(r,W^{\e,t,w,s,h}_r,S_r,H^{\e,t,w,s,h}_r\big)\\
&\qquad\qquad\qquad\leq C\big(W^{\e,t,w,s,h}_r\big)^{1-R}((\e W^{\e,t,w,s,h}_r)^{2m^*}+(\e W^{\e,t,w,s,h}_r)^{\frac{1}{m}}).
\end{align*}
Then, on the interval $\llbracket t, \tau^{\e,t,w,s,h}\rrbracket$, by definition \eqref{eq:def.tau.e} of $\tauet$ and using \eqref{eq:bound.along.strat} we have that 
\begin{align*}
|K_r|\leq C(W^{\e,t,w,s,h}_r\e)^{2m^*}|W^{\e,t,w,s,h}_r|^{1-R}.
\end{align*}
\end{proof}

The proof of Lemma~\ref{lem:probexit} requires moments existence for $W^{\e}$ for which we need first the following result on $W^0$.
\begin{lemma}
	\label{lem:moment.W0}
	In the Black-Scholes setting, the supremum of $W^0$ over $[t,T]$ has moments of all orders
	\begin{equation*}
		\E_t\bigg[\sup_{u\in[t,T]}\Big(W^{\e,t,w,s,h}_u\Big)^{\eta}\bigg]+ \E_t\bigg[\int_{t}^{T}\Big(W^{\e,t,w,s,h}_u\Big)^{\eta}du\bigg]+\E_t\bigg[\sup_{u\in[t,T]}\Big(W^0_u\Big)^\eta\bigg]=C^{t,\eta}_0 w^\eta <\infty,
	\end{equation*}
	where $\E_t$ denotes the expectation conditional on $\cF_t$ and $\eta>0$.
\end{lemma}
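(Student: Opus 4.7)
The plan is first to exploit the explicit Black--Scholes structure to control $W^0$. Substituting $c^0_u = g(u)^{-1/R} W^0_u$ and $H^{0,j}_u S^j_u = \pi_j W^0_u$ from Example~\ref{ex:frictionless} into \eqref{eq:dyn.W0} shows that
\begin{equation*}
dW^0_u = W^0_u \bigl[\beta(u)\, du + (\pi^\top \sigma) \cdot dB_u\bigr], \qquad \beta(u) := r - g(u)^{-1/R} + \pi^\top(\mu - r\one),
\end{equation*}
is a geometric Brownian motion with bounded continuous drift (since $g$ is bounded and bounded away from $0$ by Remark~\ref{rem:g}) and constant diffusion coefficient. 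Its explicit exponential form, combined with the exponential moments of Brownian motion and Doob's $L^p$ maximal inequality applied to the positive submartingale $u \mapsto \exp(\eta \pi^\top \sigma (B_u - B_t))$, yields $\E_t[\sup_{u \in [t,T]}(W^0_u)^\eta] \leq C^{t,\eta}_0 w^\eta$ with $C^{t,\eta}_0$ depending only on $\eta$, $T-t$, and the model parameters.

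Next I would establish the pathwise bound $\sup_{u \in [t,T]} W^{\e,t,w,s,h}_u \leq \Lambda \sup_{u \in [t,T]} W^0_u$ for some constant $\Lambda$ independent of $\e \in (0, \e_0]$. On $[t, \tauet]$ this is immediate from the definition of the stopping time in \eqref{eq:def.tau.e}, which forces $W^{\e,t,w,s,h}_u \leq (1+\pi^*/2) W^0_u$. On the liquidation window $[\tauet, \tauet + \eeta]$, the consumption was constructed (as recalled just after \eqref{cond:exit}) so that $\e W^{\e,t,w,s,h}_u \leq 2 \e W^0_u$ on the entire interval, preserving the same linear bound through the endpoint $\tauet + \eeta$. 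Finally, on $[\tauet + \eeta, T]$ the number of shares is $0$ and the wealth satisfies the deterministic linear ODE $dW^\e_z = (r - g(z)^{-1/R}) W^\e_z\, dz$, whose solution
\begin{equation*}
W^{\e,t,w,s,h}_z = W^{\e,t,w,s,h}_{\tauet + \eeta} \exp\Bigl(\int_{\tauet+\eeta}^z (r - g(v)^{-1/R}) dv\Bigr)
\end{equation*}
is bounded by $C\, W^{\e,t,w,s,h}_{\tauet + \eeta} \leq 2C\, W^0_{\tauet + \eeta}$ with $C$ depending only on $T$, $r$, $R$ and the lower bound on $g$.

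Taking the supremum over $u \in [t,T]$, then raising to the $\eta$-th power and taking expectations yields $\E_t[\sup_u (W^{\e,t,w,s,h}_u)^\eta] \leq \Lambda^\eta\, \E_t[\sup_u(W^0_u)^\eta] \leq C w^\eta$. The integral term is then controlled by Fubini's theorem via $\E_t[\int_t^T (W^{\e,t,w,s,h}_u)^\eta du] \leq (T-t)\, \E_t[\sup_u(W^{\e,t,w,s,h}_u)^\eta]$, which completes the proof. The only subtle point is the gluing in the third paragraph: the $\e$-dependent admissibility bound $W^{\e,t,w,s,h}_{\tauet + \eeta} \leq 1 \wedge (\e(4c_W)^{1/(2m^*)})^{-1}$ is incompatible with the required $w^\eta$ scaling for small $w$, so the key step is to replace it by the $\e$-independent linear estimate $W^{\e,t,w,s,h}_u \leq 2 W^0_u$ that the paper records alongside the admissibility bounds; everything else is routine moment computation for geometric Brownian motion.
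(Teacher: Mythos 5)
Your proposal is correct and follows essentially the same route as the paper's (very terse) proof: geometric Brownian motion moments for $W^0$, the comparison $W^\e\leq CW^0$ enforced by the stopping time up to $\tauet$ and by the constructed consumption on the liquidation window, and the deterministic linear ODE on $\llbracket\tauet+\eeta,T\rrbracket$. You simply spell out the gluing across the three intervals in more detail than the paper does, and your closing remark about using the linear bound $W^\e\leq 2W^0$ rather than the $\e$-dependent admissibility cap is the right reading of the construction.
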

\begin{proof}
The result is a direct consequence of the fact that $W^0$ is a geometric Brownian motion, the fact that by the definition of our strategies we have $\frac{W^0}{C}\leq W^\e\leq C W^0$ for some $C>0$ until $\tauet$, the SDE satisfied by $W^{\e,t,w,s,h}$ on $\llbracket \tauet, \tauet +\ets\rrbracket$ and on  $\llbracket \tauet+\ets, T\rrbracket$, the fact that $H^{\e,t,w,s,h}_r$ is strictly decreasing on $\llbracket \tauet, \tauet +\ets\rrbracket$ and the $\frac{m}{m-1}$ homogeneity of $\theta\mapsto\theta\cdot f(s,\theta)$.
\end{proof}

\subsection{Proofs of Proposition~\ref{prop:control-deviation} and Lemma~\ref{lem:probexit}}
\label{s:proof.prop.lem}
\begin{proof}[Proof of Proposition~\ref{prop:control-deviation}]
Combining the inequality \eqref{eq:mainbound1} of Lemma~\ref{lem:dev-control} with Lemmas~\ref{lem:conttidleG},~\ref{lem:bound.T.Re} and~\ref{lem:contcons} we obtain the inequality 
\begin{align*}
	&\frac{V^0(t,w,s)-V^\e(t,w,s,h)}{\e^{2m*}} - u (t,w,s){\leqslant}\E\Big[\int_{t}^{T}C\left(1+(W^{\e,t,w,s,h}_r)^{k}\right)dr\Big]\\
	&\qquad\qquad+\E\bigg[\int_{t}^{\tau^{\e,t,w,s,h}} C\emts(\e W^{\e,t,w,s,h}_r)^{2m^*}(W^{\e,t,w,s,h}_r)^{1-R}dr\bigg]\notag\\
	&\qquad\qquad+\E\bigg[\int_{\tau^{\e,t,w,s,h}}^{\tau^{\e,t,w,s,h}+\ets}C\emts \Big((W^{\e,t,w,s,h}_r)^{1-R}+ (W^{\e,t,w,s,h}_r)^{-R}(W^{0}_{\tauet})^{\frac{m}{m-1}}\\
	&\qquad\qquad\qquad\qquad\qquad\qquad\qquad\qquad\qquad\qquad\qquad+ (W^{\e,t,w,s,h}_r)^{\frac{1}{m}-R}(W^{0}_{\tauet})\Big)dr\bigg]\\
	&\qquad\qquad+\E\bigg[\int_{\tau^{\e,t,w,s,h}+\ets}^{T} C\emts(W^{\e,t,w,s,h}_r)^{1-R}dr\bigg]\notag\\
	&\qquad\qquad+\ets\E\bigg[ \varpi\bigg(T,\tilde W^0_T,S_T, \frac{h-h^0(T,\tilde W^0_T,S_T)}{\es}\bigg)\bigg]\\
	&\qquad\qquad-\ets\E\bigg[ \varpi\bigg(T,W^{\e,t,w,s,h}_T,S_T, \frac{h-h^0(T,W^{\e,t,w,s,h}_T,S_T)}{\es}\bigg)\bigg],
\end{align*}
where $k>0$ is the constant of Lemma~\ref{lem:bound.T.Re}. Now with the boundedness of the wealth on $(\tau^{\e,t,w,s,h}+\ets, T)$ (indeed after $\tauet+\ets$, $W^{\e,t,w,s,h}$ satisfies a linear, deterministic ODE with starting value satisfying \eqref{cond:exit}) and Lemma~\ref{lem:probexit}, the moments (of all positive and negative orders) of $W^0$, $\tilde W^0$ and $W^{\e,t,w,s,h}$ in Lemma~\ref{lem:moment.W0}, the definition of $\varpi$ in Lemma~\ref{lem:growth.varpi.tilde} and the growth of $\tilde \varpi $ in $\xi$ we obtain for some constant $C>0$ and some positive function $ C$ of $w$
\begin{align*}
	\frac{V^0(t,w,s)-V^\e(t,w,s,h)}{\e^{2m*}}- u (t,w,s) {\leqslant}&C(w)+C\emts\P(\tau^{\e,t,w,s,h}<T-\ets)\\
	&\qquad+\e^{2m^*-m^*\frac{2+m}{m}}\E\bigg[ |\tilde W^0_T|^{1-R+\frac{1}{m}}\bigg].\notag
\end{align*}
Note that $\tilde W^0$ is dominated by the wealth of an investor investing in a frictionless market with interest rate $r+\sup\{g(t)~|~t\in[0,T]\}$ and following the strategy given in Example~\ref{ex:frictionless} and has therefore finite $\cS^p$ norm by Lemma~\ref{lem:moment.W0}.
Note that the right hand side is in $\F_{comp}$ due to Lemma~\ref{lem:probexit} (in fact the last term goes to $0$ as $\e\to 0$, since $m>2$). This proves as well that $u^*\in\F_{comp}$.
\end{proof}
\begin{proof}[Proof of Lemma~\ref{lem:probexit}] \emph{Part 1: Bounds on the drift and volatility of the SDE satisfied by $X^{\e,t,w,s,h}$:}
	
	Let $(t,w,s,h)\in\cD\times\R^d$ and $\e\in(0,1]$. 
	Recall also the constants
	$C_{\tilde \varpi}:=\sup_{x}\frac{|x|^2}{1+\tilde \varpi^{\frac{2m}{m+2}}(x)}<\infty$ (it exists, see Lemma~\ref{lem:growth.varpi.tilde}) and  
	the stopping time $\tau^{\e,t,w,s,h}$ in \eqref{eq:def.tau.e}.
	
	Writing $C_{2,\tilde \varpi}:=\sup_{x} \frac{\tilde \varpi(x)}{|x|^{1+2/m}}<\infty$ (see Lemma~\ref{lem:growth.varpi.tilde}), assume that $(\e,t,w,s,h)\in(0,1)\times\cD\times\R^d$ is such that
	\begin{align}
		\label{eq:apinitial}
		(\e w)^{\frac{(m+2)m^*}{m}}+C_{2,\tilde \varpi}\left|\sum_{i=1}^{d} \left (\frac{h^is^i}{w}-\pi^i\right)^2\right|^{\frac{1}{2}-\frac{1}{m}}{\leq} \frac{(\pi^*)^2}{12 C_{\tilde \varpi}},
	\end{align}
	and all the estimates below will be uniform in these quantities, provided that \eqref{eq:apinitial} holds.
	 By the definition \eqref{eq:def.tau.e} of $\tauet$ and of $C_{\tilde \varpi}$, we have on $\llbracket t,\tau^\e\rrbracket$ the two inequalities
	\begin{align}
		\Bigg(\frac{(\lambda_{min}\pi^*)^2}{16C_{\tilde \varpi}^{\frac{2m}{m+2}} d^2 }\Bigg)^{\frac{m+2}{2m}}\geqslant& ( \e W^{\e,t,w,s,h}_u)^{\frac{(m+2)m^*}{m}}\left(1+ \tilde \varpi\left(X^{\e,t,w,s,h}_u\right)\right) \notag\\
		\geqslant& \frac{( \e W^{\e,t,w,s,h}_u)^{\frac{(m+2)m^*}{m} }|X^{\e,t,w,s,h}_u|^{1+\frac{2}{m}}}{C_{\tilde \varpi}},\label{eq:ineq.X.epsi}\\
		\e W^{\e,t,w,s,h}\geqslant& \bigg(\frac{1}{8c_{W}}\wedge 1\bigg)^{\frac{1}{2m^*}}~~ \mbox{and}~~| W^{\e,t,w,s,h}_u-W^0_u| \leq \frac{\pi^*}{2}W^0_u.\label{eq:ineq.W.epsi.W0}
	\end{align}
	This implies that on $\llbracket t,\tau^\e\rrbracket$ we have
	\begin{equation}
	\label{eq:hs/w.bounded}
		{W}^{\e,t,w,s,h}>0~~ \mbox{and}~~ \frac{(\lambda_{min}\pi^*)^2}{16 d^2C_{\tilde \varpi}^{\frac{2m}{m+2}} }\geqslant \frac{\lambda^2_{min}}{C_{\tilde \varpi}^{\frac{2m}{m+2}}}\sum_{i=1}^d  \left|\frac{{H}^{\e,t,w,s,h,i}_rS^i_r}{W^{\e,t,w,s,h}_u}- \pi^i\right|^2.
	\end{equation}
	This provides a useful inequality 
	\begin{align}\label{eq:bound.h.X}
		 \frac{\lambda^2_{min}}{C_{\tilde \varpi}^{\frac{2m}{m+2}}}\sum_{i=1}^d  \left|\frac{{H}^{\e,t,w,s,h,i}_rS^i_r}{W^{\e,t,w,s,h}_u}- \pi^i\right|^2\leq( \e W^{\e,t,w,s,h}_u)^{\frac{(m+2)m^*}{m}}\left(1+ \tilde \varpi\left(X^{\e,t,w,s,h}_u\right)\right).
	\end{align}
	These estimates also imply that for all $1\leqslant i\leqslant d$, the proportion of wealth invested in asset $i$ satisfies
	\begin{equation}
	\label{eq:bound.pi.epsi}
		-\frac{\pi^*}{4d}\leqslant \frac{{H}^{\e,t,w,s,h,i}_uS^i_u}{{W}^{\e,t,w,s,h}_u}-\pi^i\leqslant \frac{\pi^*}{4d}.
	\end{equation}
	on $\llbracket t,\tau^{\e,t,w,s,h}\rrbracket$. This last inequality, combined with the fact that $\pi^*$ is less than $\pi^i$ for all $i$, yields that the proportion of wealth in each asset is positive, $H^{\e,t,w,s,h,i}>0$. Summing \eqref{eq:bound.pi.epsi} in $i$, the definition of $\pi^*$ also implies that the fraction of wealth in cash is positive, and it is larger or equal to $\frac{3\pi^*}{4}$. Thus the amount of wealth in cash is larger than 
		\begin{align}\label{eq:lowerboundcash}
		\frac{3\pi^*}{4}{W}^{\e,t,w,s,h}_u \geq \frac{3\pi^*}{4}\frac{\pi^*}{2}W^0_u\mbox{ for }u\in[t,\tau^\e],
		\end{align}
		 the strategy is therefore admissible up to time $\tau^\e$.
	
	Given the price impact in \eqref{eq:def.f.example}, and the strategy defined in~\eqref{eq:trading.rate.cand} and \eqref{eq:def.thetae} the dynamics of the wealth \eqref{eq:frictionalwealthdynamics} becomes on  $\llbracket t,\tau^\e\rrbracket$
	\begin{align*}
	\frac{d{W}^{\e,t,w,s,h}_u}{{W}^{\e,t,w,s,h}_u} =& \left(r-g(u)^{-\frac{1}{R}}\right)du+\sum_{j=1}^{d}\frac{{H}^{\e,t,w,s,h,j}_uS^j_u}{{W}^{\e,t,w,s,h}_u}((\mu^j-r)du+\sigma^jdB_u)\\
	&-(\e{W}^{\e,t,w,s,h}_u)^{2m^*}\sum_{j=1}^{d}\frac{m-1}{m\kappa^{m-1}}\left|{\tilde{ \varpi}_{x_j}}\left(X^{\e,t,w,s,h}_u\right)\right|^{m}du.
	\end{align*}
	We can directly compute for any $p\neq 0$, 
	\begin{align}
	\label{eq:Ito.Wp}
	\frac{d\big({W}^{\e,t,w,s,h}_u\big)^{p}}{p\big({W}^{\e,t,w,s,h}_u\big)^{p}} =&\Big(r-g(u)^{-\frac{1}{R}}\Big)du+\sum_{j=1}^{d}\frac{{H}^{\e,t,w,s,h,j}_uS^j_u}{{W}^{\e,t,w,s,h}_u}((\mu^j-r)du+\sigma^jdB_u)\\
	&+\frac{p-1}{2}\bigg|\sum_{j=1}^{d}\frac{{H}^{\e,t,w,s,h,j}_uS^j_u}{{W}^{\e,t,w,s,h}_u}\sigma^j\bigg|^2du\notag\\
	&-\big(\e{W}^{\e,t,w,s,h}_u\big)^{2m^*}\sum_{j=1}^{d}\frac{m-1}{\kappa^{m-1} m}\left|{\tilde{ \varpi}_{x_j}}\left(X^{\e,t,w,s,h}_u\right)\right|^{m}du.\notag
	\end{align}
	Thus, the dynamics of the investment proportion displacement is given by (remember that the frictionless investment proportions $\pi_i$'s are constant in the Black-Scholes model):
	\begin{align*}
	&d\bigg(\frac{{H}^{\e,t,w,s,h,i}_uS^i_u}{{W}^{\e,t,w,s,h}_u}\bigg)=\frac{{H}^{\e,t,w,s,h,i}_uS^i_u}{{W}^{\e,t,w,s,h}_u}\Bigg((\mu^i-r+g(u)^{-\frac{1}{R}})du+\sigma^idB_u\\
	&\qquad-\sum_{j=1}^{d}\frac{{H}^{\e,t,w,s,h,j}_uS^j_u}{{W}^{\e,t,w,s,h}_u}(\mu^j-r+(\sigma^j)^\top\sigma^i)du-\sum_{j=1}^{d}\frac{{H}^{\e,t,w,s,h,j}_uS^j_u}{{W}^{\e,t,w,s,h}_u}\sigma^jdB_u\\
	&\qquad+(\e{W}^{\e,t,w,s,h}_u)^{2m^*}\sum_{j=1}^{d}\frac{m-1}{m\kappa^{m-1}}\left|{\tilde{ \varpi}_{x_j}}\left(X^{\e,t,w,s,h}_u\right)\right|^{m}du+\bigg|\sum_{j=1}^{d}\frac{{H}^{\e,t,w,s,h,j}_uS^j_u}{{W}^{\e,t,w,s,h}_u}\sigma^j\bigg|^2du\Bigg)\\
	&\qquad-(\e{W}^{\e,t,w,s,h}_u)^{-m^*}\frac{1}{ \kappa^{m-1}}\sum_{j=1}^d  \left|{\tilde{ \varpi}_{x_j}}\left(X^{\e,t,w,s,h}_u\right)\right|^{m-2}{\tilde{ \varpi}_{x_j}}\left(X^{\e,t,w,s,h}_u\right)(\s^{-1})^{i,j}du.
	\end{align*}
	We now compute the evolution of $X^{\e,t,w,s,h,i}$ and obtain,
	\begin{align*}
	&dX^{\e,t,w,s,h,i}_u=\sum_{k=1}^{d} \frac{\s_{i,k}{H}^{\e,t,w,s,h,k}_uS^k_u}{\es\big({W}^{\e,t,w,s,h}_u\big)^{(1+m^*)}}\bigg((\mu^k-r+g(u)^{-\frac{1}{R}})du\\
	&-\sum_{j=1}^{d}\frac{{H}^{\e,t,w,s,h,j}_uS^j_u}{{W}^{\e,t,w,s,h}_u}(\mu^j-r+(\sigma^j)^\top\sigma^k)du-\sum_{j=1}^{d}\frac{{H}^{\e,t,w,s,h,j}_uS^j_u}{{W}^{\e,t,w,s,h}_u}\sigma^jdB_u+\sigma^kdB_u\\
	&+(\e{W}^{\e,t,w,s,h}_u)^{2m^*}\sum_{j=1}^{d}\frac{m-1}{m\kappa^{m-1}}\Big|{\tilde{ \varpi}_{x_j}}\big( X^{\e,t,w,s,h}_u\big)\Big|^{m}du+\bigg|\sum_{j=1}^{d}\frac{{H}^{\e,t,w,s,h,j}_uS^j_u}{{W}^{\e,t,w,s,h}_u}\bigg|^2du\bigg)\\
	&-m^*X^{\e,t,w,s,h,i}_u\bigg((r-g(u)^{-\frac{1}{R}})du+\sum_{j=1}^{d}\frac{{H}^{\e,t,w,s,h,j}_uS^j_u}{{W}^{\e,t,w,s,h}_u}((\mu^j-r)du+\sigma^jdB_u)\\
	&-(\e{W}^{\e,t,w,s,h}_u)^{2m^*}\sum_{j=1}^{d}\frac{m-1}{m\kappa^{m-1}}\left|{\tilde{ \varpi}_{x_j}}\left(X^{\e,t,w,s,h}_u\right)\right|^{m}du-\frac{m^*+1}{2}\bigg|\sum_{j=1}^{d}\frac{{H}^{\e,t,w,s,h,j}_uS^j_u}{{W}^{\e,t,w,s,h}_u}\sigma^j\bigg|^2du\bigg)\\
	&-m^*\sum_{k=1}^{d} \frac{\s_{i,k}{H}^{\e,t,w,s,h,k}_uS^k_u}{\es\big({W}^{\e,t,w,s,h}_u\big)^{(1+m^*)}}\bigg(\sum_{j=1}^{d}\frac{{H}^{\e,t,w,s,h,j}_uS^j_u}{{W}^{\e,t,w,s,h}_u}\sigma^j\bigg)^\top\bigg(\sigma^k-\sum_{j=1}^{d}\frac{{H}^{\e,t,w,s,h,j}_uS^j_u}{{W}^{\e,t,w,s,h}_u}\sigma^j\bigg)du\\
	&-(\e{W}^{\e,t,w,s,h}_u)^{-2m^*}\frac{1}{\kappa^{m-1}} \left|\tilde \varpi_{x_i}\left(X^{\e,t,w,s,h}_u\right)\right|^{m-2}\tilde \varpi_{x_i}\left(X^{\e,t,w,s,h}_u\right)du.
	\end{align*}
	Note that due to the finiteness of $C_{\tilde\varpi_x}:=\sup_{i,x}\frac{|\tilde \varpi_{x_i}(x)|}{|x|^{2/m}}$ we have the inequality 
	\begin{equation}
	\label{eq:bound.sum.tilde.varpi.xj}
		(\e{W}^{\e,t,w,s,h}_u)^{2m^*}\sum_{j=1}^{d}\left|{\tilde{ \varpi}_{x_j}}\left(X^{\e,t,w,s,h}_u\right)\right|^{m}\leqslant C^m_{\tilde\varpi_x}\lambda^2_{max}\sum_{i=1}^d  \bigg|\frac{{H}^{\e,t,w,s,h,i}_uS^i_u}{{W}^{\e,t,w,s,h}_u}-\pi_i\bigg|^2,
	\end{equation}
	where $\lambda_{max}$ is the largest eigenvalue of the matrix $\s$.
	Additionally, the equalities 
	\begin{align*}
		\frac{{H}^{\e,t,w,s,h,k}_uS^k_u}{\es \big({W}^{\e,t,w,s,h}_u\big)^{1+m^*}} &= \bigg((\s^{-1})X^{\e,t,w,s,h}_u+\frac{\pi}{(\e {W}^{\e,t,w,s,h}_u)^{m^*}}\bigg)_k\\
		\sum_{k=1}^{d} \frac{\s_{i,k}{H}^{\e,t,w,s,h,k}_uS^k_u}{\es\big({W}^{\e,t,w,s,h}_u\big)^{(1+m^*)}} &= X^{\e,t,w,s,h,i}_u+\frac{\big(\s\pi\big)_i}{(\e {W}^{\e,t,w,s,h}_u)^{m^*}}
	\end{align*}
	allow us to claim that there exist processes $Y^{1,i}$ valued in $\R$, and $ Y^{2,i}$ valued in $\R^d$, function of the state variables and with growth in $\frac{{H}^{\e,t,w,s,h,j}_uS^j_u}{{W}^{\e,t,w,s,h}_u}$ at most quadratic such that 
	\begin{align*}
		&dX^{\e,t,w,s,h,i}_u = -(\e {W}^{\e,t,w,s,h}_u)^{-2m^*}\frac{1}{ \kappa^{m-1}} \left|\tilde \varpi_{x_i}\left(X^{\e,t,w,s,h}_u\right)\right|^{m-2}\tilde \varpi_{x_i}\left(X^{\e,t,w,s,h}_u\right)du\\
		&\quad+\Big((1+m^*)X^{\e,t,w,s,h,i}_u+\frac{\big(\s\pi\big)_i}{ (\e{W}^{\e,t,w,s,h}_u)^{m^*}}\Big) \left(Y^{1,i}_udu+(Y^{2,i}_u)^\top dB_u\right)\\
		&\quad+\frac{m^*(m^*+1)}{2}\frac{\big(\s\pi\big)_i}{ (\e{W}^{\e,t,w,s,h}_u)^{m^*}}\bigg|\sum_{j=1}^{d}\frac{{H}^{\e,t,w,s,h,j}_uS^j_u}{{W}^{\e,t,w,s,h}_u}\sigma^j\bigg|^2\\
		&\quad+\sum_{k=1}^{d} \frac{\s_{i,k}{H}^{\e,t,w,s,h,k}_uS^k_u}{\es\big({W}^{\e,t,w,s,h}_u\big)^{(1+m^*)}}\bigg(\Big(\mu^k-(1+m^*)\sum_{j=1}^{d}\frac{{H}^{\e,t,w,s,h,j}_uS^j_u}{{W}^{\e,t,w,s,h}_u}\sigma^j\cdot \sigma^k\Big)du+\sigma^kdB_u\bigg),
	\end{align*}
	where we can define the processes $Y$ as follows
	\begin{align*}
		Y^{1,i}_u =& \bigg(-r+g(u)^{-\frac{1}{R}}-\sum_{j=1}^{d}\frac{{H}^{\e,t,w,s,h,j}_uS^j_u}{{W}^{\e,t,w,s,h}_u}(\mu^j-r)\\
		&+(\e{W}^{\e,t,w,s,h}_u)^{2m^*}\sum_{j=1}^{d}\frac{m-1}{m\kappa^{m-1}}\left|{\tilde{ \varpi}_{x_j}}\left(X^{\e,t,w,s,h}_u\right)\right|^{m}+\bigg|\sum_{j=1}^{d}\frac{{H}^{\e,t,w,s,h,j}_uS^j_u}{{W}^{\e,t,w,s,h}_u}\sigma^j\bigg|^2\bigg),\\
		Y^{2,i}_u =& -\sum_{j=1}^{d}\frac{{H}^{\e,t,w,s,h,j}_uS^j_u}{{W}^{\e,t,w,s,h}_u}\sigma^j.
	\end{align*}
	By \eqref{eq:bound.pi.epsi} and \eqref{eq:bound.sum.tilde.varpi.xj}, the processes $Y^{1,i}$, $Y^{2,i}$ are bounded (uniformly in $(u,\omega)$) on $\llbracket t,\tau^{\e,t,w,s,h}\rrbracket$. We now compute (remember that by Lemma~\eqref{lem:growth.varpi.tilde}, $\tilde\varpi_{x_ix_j}\equiv 0$ for $i\neq j$)
	\begin{align}\label{eq:meanreversion}
	&d\tilde \varpi(X^{\e,t,w,s,h}_u)=-(\e {W}^{\e,t,w,s,h}_u)^{-2m^*}\sum_{i=1}^d  \frac{1}{\kappa^{m-1}} |\tilde \varpi_{x_i}(X^{\e,t,w,s,h}_u)|^m du\\
	&+\sum_{i=1}^{d} \tilde \varpi_{x_i}(X^{\e,t,w,s,h}_u)\Big((1+m^*)X^{\e,t,w,s,h,i}_u+\frac{\big(\s\pi\big)_i}{ (\e {W}^{\e,t,w,s,h}_u)^{m^*}}\Big) \Big(Y^{1,i}_udu+\big(Y^{2,i}_u\big)^\top dB_u\Big)\notag\\
	&+\frac{m^*(m^*+1)}{2}\sum_{i=1}^{d} \tilde \varpi_{x_i}(X^{\e,t,w,s,h}_u)\frac{\big(\s\pi\big)_i}{ (\e{W}^{\e,t,w,s,h}_u)^{m^*}}\bigg|\sum_{j=1}^{d}\frac{{H}^{\e,t,w,s,h,j}_uS^j_u}{{W}^{\e,t,w,s,h}_u}\sigma^j\bigg|^2du\notag\\
	&+\sum_{i,k=1}^{d} \frac{\tilde\varpi_{x_i}\big(X^{t,w,s,h}_u\big)\s_{i,k}{H}^{\e,t,w,s,h,k}_uS^k_u}{\es\big({W}^{\e,t,w,s,h}_u\big)^{(1+m^*)}}\bigg(\Big(\mu^k-(1+m^*)\sum_{j=1}^{d}\frac{{H}^{\e,t,w,s,h,j}_uS^j_u}{{W}^{\e,t,w,s,h}_u}\sigma^j\cdot \sigma^k\Big)du+\sigma^kdB_u\bigg)\notag\\
	&+\frac{1}{2}\sum_{i=1}^{d}\tilde \varpi_{x_ix_i}(X^{\e,t,w,s,h}_u)\bigg|\sum_{k=1}^{d} \frac{\s_{i,k}{H}^{\e,t,w,s,h,k}_uS^k_u}{\es\big({W}^{\e,t,w,s,h}_u\big)^{(1+m^*)}}\big( Y^{2,i}+\sigma^k\big)\bigg|^2du.\notag
	\end{align}
	Now define
	\begin{align*}
	&Y^{3}_u =-\sum_{i=1}^d  \frac{1}{\kappa^{m-1}} |\tilde \varpi_{x_i}(X^{\e,t,w,s,h}_u)|^m+\frac{mR}{4}|X^{\e,t,w,s,h}_u|^2\\
	&+\sum_{i=1}^{d} \tilde \varpi_{x_i}(X^{\e,t,w,s,h}_u)\Big((1+m^*)(\e {W}^{\e,t,w,s,h}_u)^{2m^*} X^{\e,t,w,s,h,i}_u+(\e {W}^{\e,t,w,s,h}_u)^{m^*}{\big(\s\pi\big)_i}\Big) Y^{1,i}_u\notag\\
	&+\frac{m^*(m^*+1)}{2}\sum_{i=1}^{d} \tilde \varpi_{x_i}(X^{\e,t,w,s,h}_u)\big(\s\pi\big)_i (\e{W}^{\e,t,w,s,h}_u)^{m^*}\bigg|\sum_{j=1}^{d}\frac{{H}^{\e,t,w,s,h,j}_uS^j_u}{{W}^{\e,t,w,s,h}_u}\sigma^j\bigg|^2\notag\\
	&+\sum_{i,k=1}^{d} (\e {W}^{\e,t,w,s,h}_u)^{m^*}\frac{\tilde\varpi_{x_i}\big(X^{t,w,s,h}_u\big)\s_{i,k}{H}^{\e,t,w,s,h,k}_uS^k_u}{{W}^{\e,t,w,s,h}_u}\Big(\mu^k-(1+m^*)\sum_{j=1}^{d}\frac{{H}^{\e,t,w,s,h,j}_uS^j_u}{{W}^{\e,t,w,s,h}_u}\sigma^j\cdot \sigma^k\Big)\notag\\
	&+\frac{1}{2}\sum_{i=1}^{d}\tilde \varpi_{x_ix_i}(X^{\e,t,w,s,h}_u)\bigg|\sum_{k=1}^{d} \frac{\s_{i,k}{H}^{\e,t,w,s,h,k}_uS^k_u}{{W}^{\e,t,w,s,h}_u}\big( Y^{2,i}+\sigma^k\big)\bigg|^2,\notag\\
	&Y^{4}_u = \sum_{i=1}^{d} \tilde \varpi_{x_i}(X^{\e,t,w,s,h}_u)\bigg(\Big((1+m^*)(\e {W}^{\e,t,w,s,h}_u)^{m^*}X^{\e,t,w,s,h,i}_u+{\big(\s\pi\big)_i}\Big) \big(Y^{2,i}_u\big)^\top \\
	&\qquad\qquad+\sum_{k=1}^{d} \frac{\s_{i,k}{H}^{\e,t,w,s,h,k}_uS^k_u}{{W}^{\e,t,w,s,h}_u}\sigma^k\bigg),
	\end{align*}
	so that 
	\begin{align*}
		d\tilde \varpi(X^{\e,t,w,s,h}_u) =& (\e{W}^{\e,t,w,s,h}_u)^{-2m^*}\left(Y^3_u -\frac{mR}{4}|X^{\e,t,w,s,h}_u|^2\right) du+(\e {W}^{\e,t,w,s,h}_u)^{-m^*} \big(Y^4_u\big)^\top dB_u.
	\end{align*} 
Now, the $Y^{1,i}$'s and the proportion of wealth invested in each asset are bounded on $\llbracket t,\tau^{\e,t,w,s,h}\rrbracket$ (see \eqref{eq:bound.pi.epsi}), so for some $C>0$ it holds
\begin{equation}
\label{eq:bound.muk.Y1}
	\Big|\mu^k-(1+m^*)\sum_{j=1}^{d}\frac{{H}^{\e,t,w,s,h,j}_uS^j_u}{{W}^{\e,t,w,s,h}_u}\sigma^j\cdot \sigma^k+Y^{1,i}_u\Big|\leqslant C.
\end{equation} 
Note that on $\llbracket t,\tau^\e\rrbracket$, $\e {W}^{\e,t,w,s,h}$ is bounded by definition~\eqref{eq:def.tau.e} of $\tauet$, and $\tilde\varpi_{xx}$ is bounded by Lemma~\ref{lem:bounded.second.derivative}.  Thus on this interval we have for some $C>0$,
\begin{align}\label{eq:boundY31}
	Y^{3}_u \leq&-\sum_{i=1}^d  \frac{1}{\kappa^{m-1}} |\tilde \varpi_{x_i}(X^{\e,t,w,s,h}_u)|^m+\frac{mR}{4}|X^{\e,t,w,s,h}_u|^2\\
	&+C\sum_{i=1}^{d}| \tilde \varpi_{x_i}(X^{\e,t,w,s,h}_u)X^{\e,t,w,s,h,i}_u|+| \tilde \varpi_{x_i}(X^{\e,t,w,s,h}_u)|+C.\notag
\end{align}
Note that by Lemma~\ref{lem:growth.varpi.tilde},
	\begin{equation}
	\label{eq:bounds.tilde.varpi.appendix}
		\sup_{i\in\{1,...,d\},x\in \R^d}\left|\frac{\tilde \varpi_{x_i} (x)x_i}{\tilde \varpi(x)}\right|+\left|\frac{\tilde \varpi_{x_i}(x)}{|x|}\right|+\left|\frac{\tilde \varpi_{x_i}(x)}{1+|x|^{\frac{2}{m}}}\right|<\infty.
	\end{equation}
	Thanks to \eqref{1st-corrector-eq-bis} (we have assumed that the matrix $(\Sigma \s)^\top(\Sigma \s)$ has positive diagonal terms) and the convexity of $\tilde \varpi$ it holds that 
	\begin{equation}
	\label{eq:bound.sum.varpi.x.lambda}
	-\sum_{i=1}^d  \kappa^{1-m} |\tilde \varpi_{x_i}(X^{\e,t,w,s,h}_u)|^m \leqslant m\lambda-\frac{mR}{2}|X^{\e,t,w,s,h}_u|^2.
	\end{equation}
	Thus, with \eqref{eq:boundY31} and \eqref{eq:bound.sum.varpi.x.lambda}, we obtain  
	\begin{align}\label{eq:boundY32}
		Y^{3}_u \leq&m\lambda-\frac{mR}{4}|X^{\e,t,w,s,h}_u|^2+C\left( \tilde \varpi(X^{\e,t,w,s,h}_u)+|X^{\e,t,w,s,h}_u|^\frac{2}{m}+1\right).
	\end{align}
	Similarly due to the growth of $\varpi$ the function $$x\mapsto m\lambda-\frac{mR}{4}|x|^2+C\left( \tilde \varpi(x)+|x|^\frac{2}{m}+1\right)$$
	is bounded from above and we obtain that $Y^3$ is bounded from above by a constant.  
	Similarly there exists a constant $C>0$ such that 
	$-\frac{mR}{4}|x|^2\leq C-\frac{(1+\tilde \varpi(x))^{\frac{2m}{m+2}}}{C}$.
	We then obtain the dynamics of $\tilde \varpi(X^{\e,t,w,s,h})$ as
		\begin{align}\label{eq:dynamics.varpi}
		d\tilde \varpi(X^{\e,t,w,s,h}_u) =& (\e{W}^{\e,t,w,s,h}_u)^{-2m^*}\left(\tilde Y^3_u -\frac{(1+ \varpi(X^{\e,t,w,s,h}_u) )^{\frac{2m}{m+2}}}{C}\right) du\\
		&+(\e {W}^{\e,t,w,s,h}_u)^{-m^*} \big(Y^4_u\big)^\top dB_u,\notag
	\end{align}
for some process $\tilde Y^3$ uniformly bounded from above and $|Y^4_u|\leq C |\tilde \varpi_{x}(X^{\e,t,w,s,h}_u)|$.
	
\emph{Part 2: Bound of $\P(\tau^\e< T-\ets)$.} When not needed, we drop the superscripts $t,w,s,h$ for notational simplicity. We denote by $\cX_u:=( W^{\e,t,w,s,h}_u)^{\frac{(m+2)m^*}{m}}\left(1+ \tilde \varpi\left(X^{\e,t,w,s,h}_u\right)\right)$ and $T^\e:=T-\ets$ and define for $\e\in(0,1)$
\begin{align*}
	A^\e&:=\Bigg\{\sup_{r\in[t,\tau^\e]}\cX_r \geqslant  \bigg(\frac{(\lambda_{min}\pi^*)^2}{16C_{\tilde \varpi}^{\frac{2m}{m+2}}d^2 \ets}\bigg)^{\frac{m+2}{2m}}\Bigg\},\, B^\e:=\left\{\sup_{r\in[t,\tau^\e]}\left|\frac{W^\e_r}{W^0_r}-1\right|\geqslant \frac{\pi^*}{2}\right\},\\
	C^\e&:=\bigg\{\inf_{r\in[t,\tau^\e]}{W^0_r}\leq \e^{m^*}\mbox{ or }\sup_{r\in[t,\tau^\e]}{\e^{1/2} W^0_r} \geqslant \frac{2}{2+\pi^*}\Big(\frac{1}{4c_{W}}\wedge 1\Big)^{\frac{1}{2m^*}}\bigg\}.
\end{align*}
We now compute 
\begin{align*}
	\P\left(\tau^\e<T-\ets\right)&{\leq}\P\left(A^\e\cup B^\e \cup C^\e\right){\leq}\P\left(A^\e\cap (B^\e)^c \cap (C^\e)^c\right)+ \P\left(B^\e\cap (C^\e)^c\right)+\P\left(C^\e\right).
\end{align*}
Note that $\P\left(C^\e\right)$ is related to the exit time of Brownian motion and can be easily estimated to any polynomial order (by Markov's inequality for example) and we obtain for example that $$\P\left(C^\e\right)\leq C(w) \e^{3m*}.$$
	
We estimate the other terms  separately. 
	
	{\it Step 1: Estimation of $\P\bigg[\sup_{t\in[0,T^\e]}\cX_t \geqslant \bigg(\frac{(\lambda_{min}\pi^*)^2}{16C_{\tilde \varpi}^{\frac{2m}{m+2}}d^2 \ets}\bigg)^{\frac{m+2}{2m}} \cap (B^\e)^c \cap (C^\e)^c\bigg]$: }
	Denote $K^\e$ and $M^\e$ respectively the finite variation and martingale part of $\cX$ and $\cM^\e$ (use \eqref{eq:Ito.Wp}, \eqref{eq:meanreversion} and It\^o's formula) the set
	\begin{align*}
	\cM^\e:=&\Bigg\{\cX_{\tau^\e} = \bigg(\frac{(\lambda_{min}\pi^*)^2}{16C_{\tilde \varpi}^{\frac{2m}{m+2}}d^2 \ets}\bigg)^{\frac{m+2}{2m}}\cap\sup_{t\in[0,T^\e]}K_t^\e \geqslant \bigg(\frac{(\lambda_{min}\pi^*)^2}{32C_{\tilde \varpi}^{\frac{2m}{m+2}}d^2 \ets}\bigg)^{\frac{m+2}{2m}}\\
	&\qquad\qquad\qquad\qquad\qquad\cap (B^\e)^c \cap (C^\e)^c\cap \sup_{t\in[0,T^\e]}M_t^\e \leqslant \bigg(\frac{(\lambda_{min}\pi^*)^2}{48C_{\tilde \varpi}^{\frac{2m}{m+2}}d^2 \ets}\bigg)^{\frac{m+2}{2m}}\Bigg\}.
	\end{align*}
	Then we have the following inequalities, 
	\begin{align*}
	\P\Bigg[\sup_{t\in[0,T^\e]}\cX_t \geqslant & \bigg(\frac{(\lambda_{min}\pi^*)^2}{16C_{\tilde \varpi}^{\frac{2m}{m+2}}d^2 \ets}\bigg)^{\frac{m+2}{2m}} \cap (B^\e)^c \cap (C^\e)^c\Bigg]\\
	&\qquad\qquad\leq\P\Bigg[\sup_{t\in[0,T^\e]}M_t^\e \geqslant \bigg(\frac{(\lambda_{min}\pi^*)^2}{48C_{\tilde \varpi}^{\frac{2m}{m+2}}d^2 \ets}\bigg)^{\frac{m+2}{2m}} \cap (B^\e)^c \cap (C^\e)^c\Bigg]+ \P[\cM^\e].	\end{align*}
	To find $K^\e$ and $M^\e$ we apply It\^{o}'s formula to $\cX$, using \eqref{eq:Ito.Wp} and \eqref{eq:dynamics.varpi} and obtain
		\begin{align*}
		&d\cX_t = ( W^{\e,t,w,s,h}_u)^{\frac{(2-m)m^*}{m}}\emts\left(\tilde Y^3_u -\frac{(1+\tilde \varpi(X^{\e,t,w,s,h}_u))^{\frac{2m}{m+2}}}{C}\right) du\\
		&+\left(Y^6_u\cX_u +\ems  (W^{\e,t,w,s,h}_u)^{\frac{2m^*}{m}}\big(Y^4_u\big)^\top Y^7_u\right) du+\bigg(\ems ( {W}^{\e,t,w,s,h}_u)^{\frac{2m^*}{m}} \big(Y^4_u\big)^\top+\cX_u \big(Y^8_u\big)^\top \bigg) dB_u,
	\end{align*}
	for $Y^6$ bounded from above, $Y^7$ and $Y^8$ uniformly bounded. 
	Note that the inequality $\sup_{x}\frac{|x||\tilde \varpi_x(x)|}{1+\tilde \varpi(x)}<\infty$ implies that there exist $Y^9$ bounded from above and $Y^{10}$ uniformly bounded such that 
	\begin{align}\label{eq:SDE.cX}
		&d\cX_t = ( W^{\e,t,w,s,h}_u)^{\frac{(2-m)m^*}{m}}\emts\left(\tilde Y^3_u -\frac{(1+\tilde \varpi(X^{\e,t,w,s,h}_u))^{\frac{2m}{m+2}}}{C}\right) du+\cX_u(Y^9_udu+Y^{10}_udB_u).
	\end{align}
	Note that 
	$$K^\e_r=\int_t^r  ( W^{\e,t,w,s,h}_u)^{\frac{(2-m)m^*}{m}}\emts\left(\tilde Y^3_u -\frac{(1+\tilde \varpi(X^{\e,t,w,s,h}_u))^{\frac{2m}{m+2}}}{C}\right) +\cX_uY^9_u du.$$
	Thus, on $\cM^\e$, there exists $u$ such that 
\begin{align*}
	\ets( W^{\e,t,w,s,h}_u)^{\frac{(m-2)m^*}{m}}\cX_uY^9_u+\tilde Y^3_u \geq \frac{(1+\tilde \varpi(X^{\e,t,w,s,h}_u))^{\frac{2m}{m+2}}}{C}=( W^{\e,t,w,s,h}_u)^{-2m^*}\frac{\cX_u^{\frac{2m}{m+2}}}{C}.
\end{align*}	Define 
\begin{align*}
	\bar t^\e:=\sup&\left\{u: \ets( W^{\e,t,w,s,h}_u)^{\frac{(m-2)m^*}{m}}\cX_uY^9_u+\tilde Y^3_u\geq( W^{\e,t,w,s,h}_u)^{-2m^*}\frac{\cX_u^{\frac{2m}{m+2}}}{C} \right\}.
\end{align*}
Note that on $\cM^\e$, $\cX^\e_{\tau^\e}=\Big(\tfrac{(\lambda_{min}\pi^*)^2}{16C_{\tilde \varpi}^{\frac{2m}{m+2}}d^2 \ets}\Big)^{\frac{m+2}{2m}}$ and the processes above are continuous. Thus on the event $\{\sup_{t\in[0,T]} \e W_t^0{\leq} 1\}\subset\{\sup_{t\in[0,T]} \e^{1/2} W_t^0{\leq} 1\}$ we have $\bar t^\e< \tau^\e$. Note that at $\bar t^\e$ we have 
$$\ets( W^{\e,t,w,s,h}_{\bar t^\e})^{2m^*}(( W^{\e,t,w,s,h}_{\bar t^\e})^{\frac{-(m+2)m^*}{m}}\cX_{\bar t^\e})Y^9_{\bar t^\e}+\tilde Y^3_{\bar t^\e}\geq \frac{( ( W^{\e,t,w,s,h}_{\bar t^\e})^{\frac{-(m+2)m^*}{m}}\cX_{\bar t^\e})^{\frac{2m}{m+2}}}{C}. $$
Thus, due to the boundedness of $\ets( W^{\e,t,w,s,h}_{\bar t^\e})^{2m^*}$, $Y^9$ and the boundedness from above of $\tilde Y^3$,  $(( W^{\e,t,w,s,h}_{\bar t^\e})^{\frac{-(m+2)m^*}{m}}\cX_{\bar t^\e})$ is bounded from above by the positive root of the equation 
$Cy+C=y^{\frac{2m}{m+2}}$ and 
$$\cX_{\bar t^\e}\leq C( W^{\e,t,w,s,h}_{\bar t^\e})^{\frac{(m+2)m^*}{m}}\leq C( \e^{1/2} W^{\e,t,w,s,h}_{\bar t^\e})^{\frac{(m+2)m^*}{m} } \e^{\frac{-(m+2)m^*}{2m}}\leq C\e^{\frac{-(m+2)m^*}{2m}}.$$
Additionally on $\llbracket\bar t^\e,\tau^\e\rrbracket$, $K$ is decreasing and  we are on the event $\sup_{t\in[0,\tau^\e]}M_t^\e {\leq}\Big(\frac{(\lambda_{min}\pi^*)^2}{48C_{\tilde \varpi}^{\frac{2m}{m+2}}d^2 \ets}\Big)^{\frac{m+2}{2m}}$ and 
$\cX_{\tau^\e} = \bigg(\frac{(\lambda_{min}\pi^*)^2}{16C_{\tilde \varpi}^{\frac{2m}{m+2}}d^2 \ets}\bigg)^{\frac{m+2}{2m}}= \bigg(\frac{(\lambda_{min}\pi^*)^2}{16C_{\tilde \varpi}^{\frac{2m}{m+2}}d^2}\bigg)^{\frac{m+2}{2m}}\e^{\frac{-(m+2)m^*}{m}}\gg \cX_{\bar t^\e}$.
Thus, $\P\left(\cM^\e\right)=0$ for $\e>0$ small enough.  This equality reflects the fact that a mean reverting process can only become large thanks to the contribution of its diffusive part. 

	Note that 
		$$\frac{d\langle M^\e_u\rangle}{du}\leq   C\cX_u^2.$$
	By Markov and BDG inequalities we have 
	\begin{align*}
	&\P\Bigg[\sup_{t\in[0,T^\e]}(M_t^\e)^2 \geqslant \bigg(\frac{(\lambda_{min}\pi^*)^2}{48C_{\tilde \varpi}^{\frac{2m}{m+2}}d^2 \ets}\bigg)^{\frac{m+2}{m}} \cap (B^\e)^c \cap (C^\e)^c\Bigg]\\
	&\qquad{\leq}  \Bigg(\frac{48C_{\tilde \varpi}^{\frac{2m}{m+2}}d^2 \ets}{(\lambda_{min}\pi^*)^2}\Bigg)^{\frac{m+2}{m}}\E\left[\sup_{t\in[0,T^\e]}(M_t^\e)^2 \right]\leq C\e^{2m^*\frac{m+2}{m}}\E\left[\int_t^{\tau^{\e}}\cX_u^2 du\right].
	\end{align*}
	We define $\a=\frac{2(m^2+3m-2)}{m(m+2)}>2$, $p=\frac{1}{2mm^*}>1$. Now, similarly to \eqref{eq:SDE.cX} applying  It\^{o}'s formula to $\cX^{\a}$ between $t$ and $\tau^\e$ produces a martingale part and an absolutely continuous part that can be divided in two elements: a dominating mean-reverting one and a slowly increasing one. Shifting half of the mean-reverting part to the left hand side, the other half is used to bound the rest of the absolutely continuous part by a constant $C>0$ and we obtain
	$$\E\left[\cX^\alpha_{\tau^\e}+\int_t^{\tau^\e}\emts \big(W^{\e,t,w,s,h}\big)^{-\frac{1}{m}}\cX^{\frac{3m^2+4m-4}{m(m+2)}}_u du\right]\leq C(\cX_t^\a+1)\leq C(w)\e^{-m^*(1+\frac{2}{m})\a}. $$
	Note that ${\frac{3m^2+4m-4}{m(m+2)}}=\frac{1}{mm^*}=2p$.
	Applying the reverse Holder inequality, the bound on the moments of $W$ implies that 
	$$\E\left[\int_t^{\tau^\e} \cX^2_u du\right]^{p}\leq C(w)\e^{2m^*(1-(1/2+1/m)\a)}=C(w)\e^{-\frac{2}{m^2}}.$$
	This finally implies that 
\begin{align*}
	\P\Bigg[\sup_{t\in[0,T^\e]}(M_t^\e)^2 \geqslant \bigg(\frac{(\lambda_{min}\pi^*)^2}{48C_{\tilde \varpi}^{\frac{2m}{m+2}}d^2 \ets}\bigg)^{\frac{m+2}{m}} \cap (B^\e)^c \cap (C^\e)^c\Bigg] &\leqslant C(w)\e^{2m^*\frac{m+2}{m}}\e^{-\frac{2m^*}{m}}\\
	&=C(w)\e^{2m^*}.
\end{align*}
		
	{\it Step 2: Bounding $\P[B^\e\cap   (C^\e)^c]$: } 
	One can compute the dynamics of ratio of the frictional wealth to the frictionless wealth as (see \eqref{eq:dyn.W0} and \eqref{eq:frictionalwealthdynamics})
	\begin{align*}
	d\bigg(\frac{W^\e_u}{W^0_u}\bigg)\frac{W^0_u}{W^\e_u}=&\sum_{j=1}^{d}\Big(\frac{{H}^{\e,t,w,s,h,j}_uS^j_u}{{W}^{\e,t,w,s,h}_u}-\pi_j\Big)\big((\mu_j-r)du+\sigma^j dB_u\big)-\frac{W^\e_u}{W^0_u}\sum_{j,k=1}^{d}\frac{{H}^{\e,t,w,s,h,j}_uS^j_u}{{W}^{\e,t,w,s,h}_u}\pi_k\sigma^j\cdot\sigma^k\\
	&-(\e{W}^{\e,t,w,s,h}_u)^{2m^*}\sum_{j=1}^{d}\frac{m-1}{m\kappa^{m-1}}\left|{\tilde{ \varpi}_{x_j}}\left(X^{\e,t,w,s,h}_u\right)\right|^{m}du\\
	&=\sum_{j=1}^{d}\Big(\frac{{H}^{\e,t,w,s,h,j}_uS^j_u}{{W}^{\e,t,w,s,h}_u}-\pi_j\Big)(\a_u du+\beta_u dB_u),
	\end{align*}
	on $\llbracket t,\tau^{\e,t,w,s,h}\rrbracket $ for some bounded $\a$ and $\beta$. Then we have using \eqref{eq:bound.h.X}
	\begin{align*}\P[B^\e&\cap   (C^\e)^c]\leq \P\Bigg[\Bigg(\int_t^{\tau^{\e}}\sum_{j=1}^{d}\left|\frac{{H}^{\e,t,w,s,h,j}_uS^j_u}{{W}^{\e,t,w,s,h}_u}-\pi_j\right|\a_tdt\Bigg)^2\geq \frac{(\pi^*)^2}{16}\cap (C^\e)^c\Bigg]\\
	&\qquad+\P\Bigg[\bigg(\sup_{t\leq r\leq \tau^\e}\int_t^{r}\sum_{j=1}^{d}\left|\frac{{H}^{\e,t,w,s,h,j}_uS^j_u}{{W}^{\e,t,w,s,h}_u}-\pi_j\right|\beta_tdB_t\bigg)^2\geq \frac{(\pi^*)^2}{16}\cap (C^\e)^c\Bigg]\\
	&\leq C\E\left[\int_t^{\tau^\e}\sum_{i=1}^d\left|\frac{{H}^{\e,t,w,s,h,j}_uS^j_u}{{W}^{\e,t,w,s,h}_u}-\pi_j\right|^2du \1_{\left\{ (C^\e)^c\right\}} \right]\leq C\ets\E\left[\int_t^{\tau^\e}\cX_u^{\frac{2m}{m+2}}du \right],
	\end{align*}
	and $\E\left[\int_t^{\tau^\e}\cX_u^{\frac{2m}{m+2}}du \right]$ can be bounded by an element in $\F_{comp}$. 
\end{proof}	
	
{\footnotesize	
\bibliographystyle{plain}
 }
\end{document}